\def\blue#1{\textcolor{blue}{#1}}
\newtheorem{assumption}{Assumption}
\newcommand{\E}{\mathbb{E}}
\newcommand{\be}{\begin{equation}}
\newcommand{\ee}{\end{equation}}
\newcommand{\ie}{{\em i.e.\xspace}}
\begin{document}

\title{Optimal Complexity in Byzantine-Robust Distributed Stochastic Optimization with Data Heterogeneity}

\author{\name Qiankun Shi \email shiqk@mail2.sysu.edu.cn \\
       \addr School of Computer Science and Engineering\\
       Sun Yat-Sen University\\
       Guangzhou, China\\
       Pengcheng Laboratory\\
       Shenzhen, China
       \AND
       \name Jie Peng \email pengj95@mail2.sysu.edu.cn \\
       \addr School of Computer Science and Engineering\\
       Sun Yat-Sen University\\
       Guangzhou, China
       \AND
       \name Kun Yuan \email kunyuan@pku.edu.cn \\
       \addr Center for Machine Learning Research\\
       Peking University\\
       Beijing, China
       \AND
       \name Xiao Wang \email wxucas@outlook.com \\
       \addr School of Computer Science and Engineering\\
       Sun Yat-Sen University\\
       Guangzhou, China
       \AND
       \name Qing Ling \email lingqing556@mail.sysu.edu.cn \\
       \addr School of Computer Science and Engineering\\
       Sun Yat-Sen University\\
       Guangzhou, China
       }

\editor{My editor}

\maketitle

\begin{abstract}
     In this paper, we establish tight lower bounds for Byzantine-robust distributed first-order stochastic optimization methods in both strongly convex and non-convex stochastic optimization. We reveal that when the distributed nodes have heterogeneous data, the convergence error comprises two components: a non-vanishing Byzantine error and a vanishing optimization error. We establish the lower bounds on the Byzantine error and on the minimum number of queries to a stochastic gradient oracle required to achieve an arbitrarily small optimization error. Nevertheless, we identify significant discrepancies between our established lower bounds and the existing upper bounds. To fill this gap, we leverage the techniques of Nesterov's acceleration and variance reduction to develop novel Byzantine-robust distributed stochastic optimization methods that provably match these lower bounds, up to logarithmic factors, implying that our established lower bounds are tight.
\end{abstract}

\begin{keywords}
Distributed Optimization, Stochastic Optimization, Byzantine Robustness, Complexity Bounds
\end{keywords}

\section{Introduction}

Large-scale stochastic optimization has emerged as an indispensable tool in machine learning, particularly in the training of large foundation models \citep{brown2020language, openai2024gpt4technicalreport}. Solving such large and intricate problems poses formidable challenges, often requiring days or months to complete. Consequently, it is imperative to expedite large-scale stochastic optimization through distributed methods. The appeal of distributed stochastic optimization lies in its potential to harness the combined power of distributed computing nodes to handle the size of modern datasets. In this paper, we explore a server-based distributed architecture, where the nodes communicate with a server that coordinates their activities and manages the distribution and aggregation of computational tasks.

Nevertheless, the promise of distributed stochastic optimization is underpinned by the assumption of a trustworthy system, in which all nodes adhere to the prescribed computational protocol. The introduction of Byzantine faults/attacks to a fraction of the nodes, i.e., arbitrary deviations from expected behaviors, potentially due to node malfunctions \citep{zhang2020blockchain, xiao2023bce}, malicious manipulations \citep{attias2022improved,liu2024badsampler}, or poisoned data \citep{mahloujifar2019universal, lewis2023attacks}, poses a significant challenge to distributed stochastic optimization methods and leads to incorrect solutions or even total failures.
The complexity of defending against Byzantine attacks is further compounded in scenarios involving heterogeneous data, where the nodes may possess non-identically distri- buted data samples such that differentiating Byzantine attacks and honest behaviors be- comes highly nontrivial \citep{li2019rsa,karimireddy2020byzantine}. In this paper, we devote to investigating the optimal complexity in Byzantine-robust distributed stochastic optimiza- tion with data heterogeneity.



The basic concept of Byzantine robustness originates from the seminal work of \citep{lamport1982byzantine}, aiming at achieving consensus in a distributed system where some nodes may act maliciously or fail arbitrarily. It is then extended to the area of distributed deterministic optimization \citep{su2016fault, chen2017distributed}. In recent years, Byzantine robustness in distributed stochastic optimization has attracted immense research interest due to the popularity of large-scale machine learning \citep{guerraoui2023byzantine,ye2025generalization}.
The pursuit of Byzantine-robust methods has led to the development of diverse strategies aimed at fortifying distributed stochastic optimization against the attacks from Byzantine nodes. The majority of these strategies rely on robust aggregators, with which the server either removes suspicious stochastic gradients prior to averaging \citep{chen2018draco,alistarh2018byzantine,xie2019zeno} or uses statistically robust estimators, such as trimmed mean \citep{yin2018byzantine}, median \citep{yin2018byzantine}, geometric median \citep{wu2020federated}, to name a few.


In scenarios with heterogeneous data, the performance of the aforementioned methods shall be significantly degraded. When data heterogeneity appears among the nodes, their local stochastic gradients exhibit varying statistical properties, diminishing the effectiveness of the robust aggregators that utilize statistical similarity to distinguish the Byzantine nodes from the rest honest nodes and leading to unavoidable convergence errors \citep{li2019rsa,wu2020federated,el2021collaborative,karimireddy2020byzantine,guerraoui2023byzantine,peng2025mean}. In light of this issue, advanced robust aggregators that are relatively insensitive to data heterogeneity, such as bucketing \citep{karimireddy2020byzantine} and nearest neighbor mixing \citep{allouah2023fixing}, has been proposed.
%


While the existing methods enjoy theoretical guarantees and/or empirical successes, the performance limits of Byzantine-robust distributed stochastic optimization methods have not been fully clarified. This paper aims to reveal the performance limits by establishing the optimal complexity in Byzantine-robust distributed stochastic optimization. We focus on first-order, synchronous methods; extensions to zeroth-order \citep{egger2025byzantine}, second-order \citep{cao2020distributed,ghosh2020distributed,koushkbaghi2024byzantine} and asynchronous methods \citep{el2021collaborative,yang2023buffered} will be our future works.


\subsection{Problem setup}


We consider a distributed system comprising a server and $n$ nodes. The sets of the honest and Byzantine nodes are denoted by $\mathcal{H}$ and $\mathcal{B}$, respectively. Note that the identities of the honest and Byzantine nodes are unknown to the server. We assume $|\mathcal{B}| < |\mathcal{H}|$ throughout this paper. The purpose of Byzantine-robust distributed stochastic optimization is to find a minimizer to
\begin{align}
\label{prob-general}
\min_{x \in {\mathbb{R}}^d}\ f(x) = \frac{1}{|\mathcal{H}|}\sum_{i \in \mathcal{H}} f_i(x), \quad \text{where} \quad f_i(x) = \mathbb{E}_{\xi \sim \mathcal{D}_i} [F(x,\xi)].
\end{align}
Here, \(\xi\) represents a random variable following the local data distribution \(\mathcal{D}_i\) of node $i$, and \(F: \mathbb{R}^d \times \mathbb{R}^q \rightarrow \mathbb{R}\) is a Borel measurable function. Each  function \(f_i\) is accessible locally by node \(i\) and is assumed to be smooth. It is important to note that data heterogeneity typically exists; that is, the local data distributions \(\{\mathcal{D}_i\}_{i=1}^n\) differ across the nodes.

\subsection{Fundamental open questions}\label{ssec:questions}


The convergence error of a Byzantine-robust distributed stochastic optimization method, after taking $k$ oracle queries of the stochastic gradients, typically comprises two components: Byzantine error and optimization error. Specifically:
\begin{align}
\textbf{Convergence error} = \textbf{Byzantine error + \textbf{Optimization error ${\boldsymbol{\epsilon}}$}}.
\end{align}
The Byzantine error is non-vanishing; it persists throughout the entire optimization process, even as the number of oracle queries $k$ approaches infinity.
Conversely, the optimization error $\epsilon$ typically decreases with the number of oracle queries $k$.
The interplay between these two error components characterizes the overall performance of Byzantine-robust distributed stochastic optimization methods, with the goal of simultaneously minimizing both errors to achieve certified Byzantine robustness and fast convergence rate. This dual objective presents a fundamental challenge in the design and analysis of Byzantine-robust distributed stochastic optimization methods. Tackling this challenge requires answering the following two fundamental questions:


\begin{itemize}
\item[Q1.] \textit{What is the smallest Byzantine error that any Byzantine-robust distributed stochastic optimization methods can achieve?}

\item[Q2.] \textit{What is the optimal convergence rate at which the optimization error $\epsilon$ decreases to zero for any Byzantine-robust distributed stochastic optimization methods, or equivalently, what is the minimum number of queries to a stochastic gradient oracle required to attain an arbitrarily small $\epsilon$?}




\end{itemize}


In this paper, we answer these two open questions via establishing tight lower bounds of the Byzantine error and the oracle query complexity. Note that several pioneering works have already shed light on these two open questions. The work of \citep{alistarh2018byzantine} provides valuable insights into the tight lower bound of the oracle query complexity in strongly convex optimization. However, the analysis is confined to homogeneous data distribution and does not account for the lower bound of the Byzantine error. On the other hand, the work of \citep{karimireddy2020byzantine} addresses the tight lower bound of the Byzantine error in non-convex optimization and heterogeneous data distribution, but does not explore the oracle query complexity.




\subsection{Main results and contributions}


In this paper, we provide a comprehensive analysis that establishes the lower bounds of the Byzantine error and the oracle query complexity in Byzantine-robust distributed stochastic optimization. We also validate the tightness of these lower bounds through developing methods that can attain optimal Byzantine robustness and optimal convergence rate simultaneously. In particular, our contributions are:
\begin{itemize}
    \item We establish the lower bounds on the Byzantine error for Byzantine-robust distributed methods in both strongly convex and non-convex stochastic optimization.


    \item We establish the lower bounds on the convergence rate at which the optimization error $\epsilon$ approaches zero for Byzantine-robust distributed methods in both strongly convex and non-convex stochastic optimization. Leveraging these results, we reveal the lower bounds on the minimum number of queries to a stochastic gradient oracle required to achieve an arbitrarily small optimization error $\epsilon$.


    \item We identify significant discrepancies between our established lower bounds and the Byzantine robustness and convergence rates reported in the existing works. To fill this gap, we propose novel Byzantine-robust distributed stochastic optimization methods that provably match these lower bounds, up to at most logarithmic factors. This fact implies that our established lower bounds are tight, and our proposed methods attain the optimal Byzantine robustness and the optimal convergence rates simultaneously.

\end{itemize}

The bounds established in this paper, along with those from the existing state-of-the-art Byzantine-robust distributed stochastic optimization methods, are summarized in Tables \ref{table:sc} and \ref{table:nc}. From the lower bound perspective, our work simultaneously explores the Byzantine error and the oracle query complexity, whereas the existing studies address only one of these two aspects \citep{alistarh2018byzantine,karimireddy2021learning}.
From the upper bound perspective, our proposed methods match the lower bounds, achieving superior Byzantine robustness while demonstrating theoretically faster convergence rates.

\begin{table}[t]
\caption{\small Lower and upper bounds of finding $x$ such that $\E[\|\nabla f(x)\|]$ is no larger than the Byzantine error plus the optimization error $\epsilon$ in strongly convex stochastic optimization. Notations: $n$ is the number of nodes; $\delta \in [0,1/2)$ is the estimated fraction of Byzantine nodes that is no smaller than the true fraction of Byzantine nodes; \( \sigma^2 \) bounds the variance of the stochastic gradient estimates (see Assumption \ref{ass:u}), with \( \sigma^2 = 0 \) corresponding to deterministic optimization; \(\zeta^2\) bounds the stochastic gradient dissimilarity between the nodes (see Assumption \ref{ass:i}), with \(\zeta^2 = 0\) corresponding to homogeneous data distribution;
$\rho \ge 0$ is the coefficient to measure the robustness of an aggregator (see Definition \ref{d:agg});
$R = \|x^0 - \arg\min_x f(x)\|$; $L$ is the Lipschitz smoothness constant; $\mu$ is the strongly convex constant; $\kappa:={L}/{\mu}$ is the condition number; $\tilde \Omega(\cdot)$ and $\tilde O(\cdot)$ hide constants and logarithmic factors.
}
    \centering
    \begin{threeparttable}  
    \renewcommand{\arraystretch}{1}
    \begin{tabularx}{\textwidth}{Xccc}
    \toprule
      & \textbf{Byzantine error} & \textbf{Oracle query complexity}   & \textbf{Reference}  \\
     \midrule
     \multirow{4}{*}{\rotatebox{90}{\thead{\bf Lower bound}}} & \multirow{2}{*}{/} & \multirow{2}{*}{$\Omega\left(\frac{\delta^2 \sigma^2}{\epsilon^2} + \frac{\sigma^2}{n\epsilon^2}\right)$} & \multirow{2}{*}{\cite{alistarh2018byzantine}} \\
    \multirow{4}{*}{}   &\multirow{2}{*}{}  &\multirow{2}{*}{}  & \multirow{2}{*}{}\vspace{0.15cm}\\
     \multirow{4}{*}{} & \multirow{2}{*}{\blue{$\Omega\left(\rho^{1/2}\delta^{1/2}\zeta\right)$}} & \multirow{2}{*}{\blue{$\tilde \Omega\left(\frac{\rho\delta \sigma^2}{\epsilon^2} + \frac{\sigma^2}{(1-\delta)n\epsilon^2}+ \kappa^{1/2}\right)$}} & \multirow{2}{*}{\blue{Theorem \ref{thm:flb}}} \\
    \multirow{4}{*}{}   &\multirow{2}{*}{}  &\multirow{2}{*}{}  & \multirow{2}{*}{}\vspace{0.15cm}\\
    \midrule
     \multirow{10}{*}{\rotatebox{90}{\thead{\bf Upper bound}}} & \multirow{2}{*}{/} & \multirow{2}{*}{$\tilde O \left(\kappa + \frac{\kappa\delta^2 \sigma^2}{\epsilon^2} + \frac{\kappa\sigma^2}{n\epsilon^2}\right)$} & \multirow{2}{*}{\cite{alistarh2018byzantine}$^\dagger$} \\
    \multirow{10}{*}{} &\multirow{2}{*}{}   &\multirow{2}{*}{}  & \multirow{2}{*}{}\\
    \multirow{10}{*}{} & \multirow{2}{*}{$O\left(L\kappa \delta n \zeta\right)$} & \multirow{2}{*}{$O \left(\frac{LR^2}{\epsilon^2}+\frac{\kappa}{\mu}\frac{(\delta^2+(1-\delta)^2)n^2\zeta^2+(1-\delta)n\sigma^2}{\epsilon^2} \right)$} & \multirow{2}{*}{\cite{li2019rsa}$^\dagger$} \\
    \multirow{10}{*}{} &\multirow{2}{*}{}   &\multirow{2}{*}{}  & \multirow{2}{*}{}\\
    \multirow{10}{*}{} & \multirow{2}{*}{$O\left(\kappa^2 \zeta\right)$} & \multirow{2}{*}{$\tilde O \left(\kappa+\frac{\kappa^5(1+\delta)\sigma^2}{(1-\delta)n\epsilon^2} + \frac{\kappa^5\delta\sigma^2}{\epsilon^2}\right)$} & \multirow{2}{*}{\cite{data2021byzantine}$^\dagger$} \\
    \multirow{10}{*}{} &\multirow{2}{*}{}   &\multirow{2}{*}{}  & \multirow{2}{*}{}\\
    \multirow{10}{*}{} & \multirow{2}{*}{$O\left(\frac{\zeta}{1-2\delta}\right)$} & \multirow{2}{*}{$\tilde O \left(\kappa + \frac{\kappa L \sigma^2}{(1-2\delta)^2\epsilon^2}  \right)$} & \multirow{2}{*}{\cite{pillutla2022robust}$^\dagger$} \\
    \multirow{10}{*}{} &\multirow{2}{*}{}   &\multirow{2}{*}{}  & \multirow{2}{*}{}\\
    \multirow{10}{*}{} & \multirow{2}{*}{\blue{$O\left(\kappa^{1/2}\rho^{1/2}\delta^{1/2}\zeta\right)$}} & \multirow{2}{*}{\blue{$\tilde O \left(\frac{\kappa^{3/2}\rho\delta \sigma^2}{\epsilon^2} + \frac{\kappa^{3/2}\sigma^2}{(1-\delta)n\epsilon^2}+ \kappa^{1/2}\right)$}} & \multirow{2}{*}{\blue{Algorithm \ref{algo:restart} (Thm. \ref{thm:sc-restart})}} \\
    \multirow{10}{*}{} &\multirow{2}{*}{}   &\multirow{2}{*}{}  & \multirow{2}{*}{}\\
    \bottomrule
\end{tabularx}
    \begin{tablenotes}
        \scriptsize
        \item $\dagger$ The bounds are established for specific robust aggregators.
    \end{tablenotes}
    \end{threeparttable}  
    \label{table:sc}
\end{table}

\begin{table}[t]
    \centering
    \begin{threeparttable}
    \renewcommand{\arraystretch}{1}
    \caption{\small Lower and upper bounds of finding $x$ such that $\E[\|\nabla f(x)\|]$ is no larger than the Byzantine error plus the optimization error $\epsilon$ in non-convex stochastic optimization. Notations not appeared in Table \ref{table:sc}: $m$ is the batch size; $\Delta := f(x^0) - f(x^\star)$;  $c_1=\frac{\rho\delta\sigma^4}{(1-\delta)n\epsilon^4}+\frac{\sigma^2}{(1-\delta)n\epsilon^2}$; $c_2 = \frac{(1-\delta)^{1/3}L^{1/3}\Delta^{1/3}\sigma^{2/3}}{(1+(1-\delta)\rho\delta n)^{1/3}n^{1/3}\epsilon^{4/3}}$.}
    \begin{tabularx}{\textwidth}{X c c c}
    \toprule
      & \centering\textbf{Byzantine error} & \textbf{Oracle query complexity}   & \textbf{References}  \\
     \midrule
     \multirow{4}{*}{\rotatebox{90}{\thead{\bf Lower bound}}} & \centering\multirow{2}{*}{$\Omega\left(\delta^{1/2}\zeta\right)$} & \multirow{2}{*}{/} & \multirow{2}{*}{\cite{karimireddy2020byzantine}} \\
    \multirow{4}{*}{}   &\centering\multirow{2}{*}{}  &\multirow{2}{*}{}  & \multirow{2}{*}{}\vspace{0.15cm}\\
     \multirow{4}{*}{} & \centering\multirow{2}{*}{\blue{$\Omega\left(\rho^{1/2}\delta^{1/2}\zeta\right)$}} & \multirow{2}{*}{\blue{$\Omega \left(\frac{L\Delta\rho\delta \sigma^2}{\epsilon^4} + \frac{L\Delta\sigma^2}{(1-\delta)n\epsilon^4} + \frac{L\Delta}{\epsilon^2}\right)$}} & \multirow{2}{*}{\blue{Theorem \ref{thm:flb}}} \\
    \multirow{4}{*}{}   &\multirow{2}{*}{}  &\multirow{2}{*}{}  & \multirow{2}{*}{}\vspace{0.15cm}\\
    \midrule
    \multirow{10}{*}{\rotatebox{90}{\thead{\bf Upper bound}}} &
    \centering\multirow{2}{*}{$O\left((1+\delta)^{1/2} \zeta\right)$} & \multirow{2}{*}{$ O \left(\frac{L^2R^2}{\epsilon^2}\left(1 + \frac{(1+\delta)\sigma^2}{(1-\delta)n\epsilon^2}+ \frac{\delta\sigma^2}{\epsilon^2}\right)  \right)$} & \multirow{2}{*}{\cite{data2021byzantine}$^\dagger$} \\
    \multirow{10}{*}{} &\multirow{2}{*}{}   &\multirow{2}{*}{}  & \multirow{2}{*}{}\\
    \multirow{10}{*}{} & \centering\multirow{2}{*}{$O\left(\rho^{1/2}\delta^{1/2}\zeta\right)$} & \multirow{2}{*}{$O \left(\frac{L\Delta\rho\delta\sigma^2}{\epsilon^4}+\frac{L\Delta\sigma^2}{(1-\delta)n\epsilon^4}+\frac{L\Delta}{\epsilon^2}+c_1\right)$}   & \multirow{2}{*}{\cite{karimireddy2020byzantine}} \\
    \multirow{10}{*}{} &\multirow{2}{*}{}   &\multirow{2}{*}{}  & \multirow{2}{*}{}\\
    \multirow{10}{*}{} & \centering\multirow{2}{*}{\blue{$O\left(\rho^{1/2}\delta^{1/2}\zeta\right)$}} & \multirow{2}{*}{\blue{$O \left(\frac{L\Delta\rho\delta\sigma^2}{\epsilon^4}+\frac{L\Delta\sigma^2}{(1-\delta)n\epsilon^4}+\frac{L\Delta}{\epsilon^2}+c_1\right)$}}   & \multirow{2}{*}{\blue{Algorithm \ref{subalgo} (Cor. \ref{cor:nc1})}} \\
    \multirow{10}{*}{} &\multirow{2}{*}{}   &\multirow{2}{*}{}  & \multirow{2}{*}{}\\
    \multirow{10}{*}{} & \centering\multirow{2}{*}{$O\left(\rho^{1/2}\delta^{1/2}\zeta\right)$} & \multirow{2}{*}{$O \left(\frac{L\Delta\rho\delta\sigma^2}{\epsilon^4}+\frac{L\Delta\sigma^2}{(1-\delta)n\epsilon^4}+\frac{L\Delta}{\epsilon^2}+c_2\right)$}   & \multirow{2}{*}{\cite{allouah2023fixing}} \\
    \multirow{10}{*}{} &\multirow{2}{*}{}   &\multirow{2}{*}{}  & \multirow{2}{*}{}\\
    \multirow{10}{*}{} & \centering\multirow{2}{*}{\blue{$O\left(\rho^{1/2}\delta^{1/2}\zeta\right)$}} & \multirow{2}{*}{\blue{$\tilde O \left(\frac{L\Delta\rho\delta \sigma^2}{\epsilon^4} + \frac{L\Delta\sigma^2}{(1-\delta)n\epsilon^4} + \frac{L\Delta}{\epsilon^2}\right)$}} & \multirow{2}{*}{\blue{Algorithm \ref{algo-nc} (Thm. \ref{thm:nc})}} \\
    \multirow{10}{*}{} &\multirow{2}{*}{}   &\multirow{2}{*}{}  & \multirow{2}{*}{}\\
    \bottomrule
\end{tabularx}
\label{table:nc}
    \begin{tablenotes}
        \scriptsize
        \item $\dagger$ The bound is established for a specific robust aggregator.
        \vspace{1pt}
    \end{tablenotes}
    \end{threeparttable}
\end{table}

\subsection{Related works}

{\bf Lower bounds for Byzantine-free single-node optimization.} For deterministic problems, the lower bounds on the iteration complexity of strongly convex and convex optimization are established and proved to be tight in the works of \citep{nemirovskij1983problem,nesterov2003introductory}. That of non-convex optimization are established in \citep{carmon2020lower,carmon2021lower}. For convex stochastic problems with the finite-sum and expectation-minimization structures, the tight lower bounds are derived in \citep{woodworth2016tight} and \citep{foster2019complexity}, respectively. For non-convex stochastic problems, the works of \citep{fang2018spider} and \citep{li2021page} investigate the tight lower bound in the finite-sum structure, while the work of \citep{arjevani2023lower} considers that in expectation-minimization. 

\noindent {\bf Lower bounds for Byzantine-free distributed optimization.} The lower bounds on the iteration complexity of distributed strongly convex deterministic optimization is established in \citep{scaman2017optimal}, in which the network can be both server-based and server-less. A distributed dual accelerated method is proposed to achieve these lower bounds. For distributed server-less, non-convex, stochastic optimization, the optimal oracle query and communication complexities are obtained in \citep{lu2021optimal} given that the communication graphs are linear. These findings are extended to general graphs in \citep{yuan2022revisiting}. The communication complexity of distributed server-based methods with communication compression is investigated in \citep{huang2022lower}.

\noindent {\bf Lower bounds for Byzantine-robust distributed stochastic optimization.} For the convex problems with homogeneous data distributions, the optimal oracle query complexity is established in \citep{alistarh2018byzantine}. When the data distributions are heterogeneous, the non-vanishing Byzantine error emerges \citep{karimireddy2020byzantine}. In contrast, our work simultaneously establishes the tight lower bounds of the Byzantine error and the oracle query complexity.
The lower bound of the statistical learning rate for Byzantine-robust distributed stochastic mean estimation is investigated in \citep{yin2018byzantine}. Two Byzantine-robust methods based on the trimmed mean and coordinate-wise median aggregators are proposed to achieve the order-optimal statistical learning rate. The impact of the dimensionality on the statistic learning rate is taken into account in \citep{zhu2023byzantine}.

\subsection{Organization}

The rest of this paper is organized as follows. Section \ref{sec:2} introduces Byzantine-robust distributed stochastic optimization, including the function, stochastic gradient oracle, robust aggregator, and method classes that are necessary for the ensuing analysis. Section \ref{sec:lb} states the lower bounds of the Byzantine error and the oracle query complexity for strongly convex and non-convex problems.
Section \ref{sec:ub} proposes novel methods to attain the established lower bounds, validating their tightness. Numerical experiments are conducted in Section \ref{sec:exp}. Section \ref{sec:con} summarizes this work.


\section{Byzantine-robust distributed stochastic optimization}\label{sec:2}

This section specifies the notations, assumptions, and problem setup under which we study the optimal complexity for solving the Byzantine-robust distributed stochastic optimization problem in the form of \eqref{prob-general}.

\subsection{Notations}
Throughout this paper, we use $\E_{\xi \sim \mathcal D}$ to denote the expectation over $\xi$, which is a random variable following the distribution $\mathcal D$, and we refer to it as $\E_\xi$ or $\E$ if there is no confusion. We use \( t \) and \( k \) to denote the numbers of iterations and oracle queries, respectively. Accordingly, \( T \) and \( K \) represent the overall numbers of iterations and oracle queries, respectively.
The Euclidean norm is denoted by $\|\cdot\|$.  We use the big-$O$ notations to describe complexity, with $O(\cdot)$ and $\Omega(\cdot)$ hiding constants while $\tilde O(\cdot)$ hiding both constants and logarithmic factors.

\subsection{Function class $\mathcal{F}$}\label{ssec:f}
We let the function class $\mathcal{F}_{L,\zeta^2}$, abbreviated as $\mathcal{F}$, denote the set of all functions $f$ satisfying Assumptions \ref{ass:basic} and \ref{ass:i} for any underlying dimension $d \in \mathbb{N}_+$.


\begin{assumption}\label{ass:basic}
    The function $f(x)$ is continuously differentiable. The functions $\{f_i(x)\}_{i\in \mathcal H}$ are lower bounded. In addition, the functions $\{f_i(x)\}_{i\in \mathcal H}$ are $L$-smooth, \ie, there exists a constant $L>0$ such that
    \begin{align*}
        \| \nabla f_i(x) - \nabla f_i(y) \| \le L\|x - y\|
    \end{align*}
    for all $i \in \mathcal{H}$ and $x, y \in \mathbb{R}^d$.
\end{assumption}

\begin{assumption}\label{ass:i}
    The gradients $\{\nabla f_i(x)\}_{i\in \mathcal H}$ satisfy
    \[
    \frac{1}{|\mathcal H|}\sum_{i \in \mathcal{H}} \|\nabla f_i(x) - \nabla f(x)\|^2 \leq \zeta^2
    \]
    for some $\zeta^2\geq 0$, where $\nabla f(x) = (1/|\mathcal H|)\sum_{i \in \mathcal{H}}\nabla f_i(x)$ according to \eqref{prob-general}.
\end{assumption}

Assumption \ref{ass:basic} is very common. Assumption \ref{ass:i} is widely used in distributed optimization to restrict the data heterogeneity \citep{lian2017can,reddi2020adaptive,karimireddy2020byzantine,allouah2023fixing}.

In the ensuing analysis, we shall examine the complexity bounds when $\{f_i(x)\}_{i \in \mathcal{H}}$ are either $\mu$-strongly convex or non-convex. Below we give the definition of $\mu$-strong convexity.

\begin{definition}
    The functions $\{f_i(x)\}_{i \in \mathcal{H}}$ are \(\mu\)-strongly convex, if there exists a constant $\mu$ $>0$ such that
    \[
        f_i(y) \geq f_i(x) + \nabla f_i(x)^\top (y - x) + \frac{\mu}{2} \|y - x\|^2.
    \]
    for all $i \in \mathcal{H}$ and $x, y \in \mathbb{R}^d$.
\end{definition}

If a function is both $L$-smooth and $\mu$-strongly convex, then $\mu \leq L$.


\subsection{Stochastic gradient oracle class $\mathcal{O}$}\label{ssec:oracle}
We assume that at each iteration $t$, each node $i \in \mathcal H$ can obtain its local stochastic gradient $\nabla F(x,\xi_i^t)$ through an oracle $\mathsf O$, \ie, $\nabla F(x, \xi_i^t)  = \mathsf O(F, x, \xi_i^t)$. We let $\mathcal{O}_{\sigma^2}$, abbreviated as $\mathcal O$, denote the set of all oracles that satisfy the following assumption.
\begin{assumption}\label{ass:u}
The function $F(x,\xi)$ is continuously differentiable with respect to $x$, and the stochastic gradient \( \nabla F(x, \xi_i^t)  = \mathsf O(F, x, \xi_i^t)\) obtained by node \( i\in \mathcal{H} \) through the oracle $\mathsf O \in \mathcal O$ satisfies the following conditions:
\begin{itemize}
\item The random variable \( \xi_i^t \) is independently drawn across all nodes $i \in \mathcal{H}$ and all iterations $t \in \mathbb{N}$.
\item The stochastic gradient is an unbiased estimator of the true gradient, i.e.,
   \[
   \mathbb{E}_{\xi_i^t}[\nabla F(x, \xi_i^t)] = \nabla f_i(x), \quad \forall i\in \mathcal{H}, \ t \in \mathbb{N}.
   \]
\item The variance of the stochastic gradient is bounded, i.e.,
   \[
   \mathbb{E}_{\xi_i^t}[\|\nabla F(x, \xi_i^t) - \nabla f_i(x)\|^2] \leq \sigma^2, \quad \forall i\in \mathcal{H}, \ t \in \mathbb{N}
   \]
   for some constant \( \sigma^2 \ge 0 \).
\end{itemize}
\end{assumption}

In distributed stochastic optimization, independent sampling across different nodes and iterations is common. Besides, the unbiasedness and the bounded variance of the stochastic gradient are widely used assumptions in stochastic optimization \citep{bottou2018optimization}. In the ensuing analysis, we will denote all honest nodes computing their stochastic gradients once as one oracle query.

\subsection{Robust aggregator class $\mathcal{A}$}\label{ssec:agg}
Robust aggregators are essential for mitigating the impact of the Byzantine nodes that inject malicious updates in distributed stochastic optimization. A number of effective robust aggregators, such as Krum, Median, Trimmed Mean, and others, have been proposed in the literature with theoretical guarantees and empirical successes. Nevertheless, the theoretical limits of Byzantine-robust distributed stochastic optimization methods with these robust aggregators remain unknown. Investigating each individual robust aggregator would require an impractical amount of effort. For this reason, this paper does not study the optimal complexity with a specific robust aggregator, but instead focuses on a class of $(\delta_{\rm max},\rho)$-robust aggregators $\mathcal A$ \citep{allouah2023fixing,farhadkhani2022byzantine,karimireddy2020byzantine} defined as follows.
\begin{definition}[$(\delta_{\rm max},\rho)$-robust aggregator]\label{d:agg}
    Consider $n$ inputs $\{w_i\}_{i=1}^n$ from all $n$ nodes, $|\mathcal{H}|$ of them being from the honest nodes in $\mathcal{H}$ and the number of honest nodes satisfying $|\mathcal H| \geq (1 - \delta)n$ with $0 < \delta \leq \delta_{\rm max} < 0.5$. Define $\bar w = \frac{1}{|\mathcal H|}\sum_{i \in \mathcal H} w_i$. An aggregator $\mathsf A \in \mathcal A$ is called $(\delta_{\rm max},\rho)$-robust if there exists a constant $\rho \geq 0$ such that the output 
    {$w=\mathsf A(\{w_i\}_{i=1}^n)$} satisfies
    \begin{align}
    \label{eq:agg}
    \|w - \bar w\|^2 \leq  \frac{\rho\delta}{|\mathcal H|}\sum_{i \in \mathcal H} \|w_i - \bar w\|^2.
    \end{align}
\end{definition}

%

\begin{table*}[t!]
\caption{\small Comparison between different $(\delta_{\rm max},\rho)$-robust aggregators.
}
\centering
\renewcommand{\arraystretch}{1}
\begin{tabular}{cccc}
\toprule
  & & $\rho \delta$   & References \\
\midrule
&Krum & $6+\frac{6\delta}{1-2\delta}$   & \cite{blanchard2017machine}  \vspace{1.5mm} \\
&Median (Med) & $4\left(1+\frac{\delta}{1-2\delta}\right)^2$   & \cite{yin2018byzantine} \vspace{1mm} \\
&Trimmed Mean (TM) & $\frac{6\delta}{1-2\delta}\left(1+\frac{\delta}{1-2\delta}\right)$   & \cite{yin2018byzantine}\\
&FABA & $\frac{2\delta|\mathcal H|}{1-3\delta}$   & \cite{xia2019faba}\\
&Geometric Median (GM) & $4\left(1+\frac{\delta}{1-2\delta}\right)^2$   & \cite{wu2020federated}\\
&Center Clipping (CC) & $18\sqrt{2}\delta\sqrt{|\mathcal H|}$   & \cite{karimireddy2021learning} \vspace{1.5mm} \\
\midrule
&\textbf{Lower bound} & $\frac{\delta}{1-2\delta}$   & \cite{allouah2023fixing}
  \\
\bottomrule
\end{tabular}\label{tab:rho}
\begin{tablenotes}
        \scriptsize
        \item $\dagger$ The robustness coefficients $\rho$ of Krum, Med, TM, and GM, and the lower bound are established in \citep{allouah2023fixing}. That of FABA comes from \citep{peng2024mean}. That of CC is given in Appendix \ref{appendix:A}.
    \end{tablenotes}
\end{table*}

Using a $(\delta_{\rm max},\rho)$-robust aggregator, the deviation of the robust average $w$ from the true average $\bar{w}$ is able to be bounded by the variance of the honest inputs $\{w_i\}_{i \in \mathcal{H}}$. These robust aggregators effectively mitigate the impact of the Byzantine nodes, preventing output divergence.
In particular, a robust aggregator will recover the exact average if the honest inputs $\{w_i\}_{i \in \mathcal{H}}$ are equal and in the majority. Furthermore, it is important to note that $\delta_{\rm max}$ denotes the maximum fraction of the Byzantine nodes the robust aggregator can tolerate, while $\delta$ serves as a form of prior knowledge about the problem, representing the estimated fraction of the Byzantine nodes in the distributed network and being no smaller than the true fraction of the Byzantine nodes. In the analysis, we generally assume that \(\delta > 0\); otherwise, we can simply use the mean aggregator to obtain \(\bar{w}\), resulting in a trivial outcome. We also require $\delta_{\max}<0.5$, meaning that the Byzantine nodes are not dominant. Last but not least, the robustness coefficient $\rho$ plays a key role in characterizing the effectiveness of a robust aggregator. For most robust aggregators, $\rho$ is a function of $\delta$ is dependent on the priori knowledge of $\delta$. Table \ref{tab:rho} lists the robustness coefficients $\rho$ of various $(\delta_{\rm max},\rho)$-robust aggregators.

\subsection{Method class $\mathcal{M}$}
In this paper, we investigate a class of server-based methods to solve the Byzantine-robust distributed stochastic optimization problem in the form of \eqref{prob-general}. With an initialized variable $x^0$, these methods proceed with three phases.
\begin{itemize}
    \item {\it Local computation.} Upon receiving the variable $x^t$ transmitted by the server, each honest node $i \in \mathcal{H}$ computes a batch of $m$ vectors as
    \[
    \mathbf{x}^{(t)}_i = \mathbf{x}^{(t)}:= \left( x^{(t,1)}, \cdots, x^{(t,m)} \right)\in \mathbb R^{d \times m}\ {\rm with}~ x^{(t,l)} \in {\rm span}(x^0,\cdots,x^t) \subseteq \mathbb R^d,\ \forall l \in [m].
    \]
    Note that $\{\mathbf{x}_i^{(t)}\}_{i \in \mathcal H}$ are identical across all honest nodes. Given $\mathbf{x}^{(t)}_i$, each honest node $i \in \mathcal{H}$ samples $m$ independent random variables $\{\xi^{(t,l)}_i\}_{l=1}^m$ with each $\xi^{(t,l)}_i \sim \mathcal D_i$, and queries a batch of $m$ stochastic gradients from the oracle $\mathsf O \in \mathcal O$ as
    \[
    \left(\nabla F(x^{(t,1)}_i,\xi^{(t,1)}_i),\cdots, \nabla F(x^{(t,m)}_i,\xi^{(t,m)}_i)\right) \in \mathbb{R}^{d\times m}.
    \]
    With the above stochastic gradients, each honest node $i \in \mathcal{H}$ computes a gradient estimator
    \[
    w_i^t \in {\rm span}\left(\left\{\nabla F \left(x^{(j,l)}_i, \xi^{(j,l)}_i\right): j = 0, \cdots, t, l = 1, \cdots, m\right\}\right) \subseteq \mathbb R^d.
    \]
    In this paper, we consider the gradient estimator $w_i^t$ that can be also regarded as a linear combination of the preceding stochastic gradients, in the form of
    \begin{align}\label{re:p}
    w^t_i = \sum_{j=1}^{t} \sum_{l=1}^m \alpha^{(j,l)} \nabla F(x_i^{(j,l)},\xi_i^{(j,l)})\in \mathbb{R}^{d},
    \end{align}
    in which $\alpha^{(j,l)} \geq 0$ is the coefficient associated with $\nabla F(x_i^{(j,l)},\xi_i^{(j,l)})$. Besides, we assume \(\sum_{j=1}^{t} \sum_{l=1}^m \alpha^{(j,l)} \ge \alpha_{\rm min} > 0\) for any \(t,m\). Such a gradient estimator $w_i^t$ in \eqref{re:p} is highly versatile and reduces to various existing ones through selecting appropriate values for each $\alpha^{(j,l)}$. For instance,
    when $\alpha^{(j,l)}=1/m$ if $j=t$ and $\alpha^{(j,l)}=0$ otherwise, $w_i^t$ reduces to the mini-batch stochastic gradient in the form of
    $(1/m)\sum_{l=1}^m\nabla F(x_i^{(t,l)},\xi_i^{(t,l)})$. If we further assume $m=1$, $w_i^t$ becomes the classical stochastic gradient $\nabla F(x_i^{(t,1)},\xi_i^{(t,1)})$. Likewise, we can also recover the stochastic mo- mentum \citep{polyak1964some}.


    \item {\it Communication.} Each honest node $i \in \mathcal H$ uploads its computed $w^t_i$ to the server. Each Byzantine node $i \in \mathcal B$, however, may upload an arbitrary vector $w^t_i \in \mathbb{R}^d$.
    \item {\it Global variable update.} The server uses a $(\delta_{\max}, \rho)$-robust aggregator $\mathsf A \in \mathcal{A}$ to pro- cess the messages received from all nodes, yielding an aggregated gradient
    \[
    w^t = \mathsf{A}(w^t_1,\cdots,w^t_n).
    \]
    Subsequently, the server updates the variable $x$ using all historical variables ${x^0,\cdots,x^t}$ and all historical aggregated gradients ${w^0,\cdots,w^t}$. Formally,
    \[
    x^{t+1} \in {\rm span}(x^0,\cdots,x^t,w^0,\cdots,w^t).
    \]
    The server then transmits $x^{t+1}$ to all nodes, initiating a new iteration.
\end{itemize}
Such a process is repeated. We denote the output after $t$ iterations as $\hat x^t \in {\rm span}(x^0,\cdots,x^t)$ for any $t \in \mathbb{N}$. In this paper, we study the set of methods that include the above processes, denoted as $\mathcal{M}$.

\begin{remark}
With particular note, each honest node $i \in \mathcal{H}$ is also allowed to compute and upload multiple gradient estimators at each iteration, only bringing a constant to the overall complexity.
\end{remark}



\section{Lower bound of Byzantine-robust distributed stochastic optimization}\label{sec:lb}

Having introduced the definitions of the function, stochastic gradient oracle, robust aggregator, as well as method classes, we are ready to formalize the concept of complexity. We will prove in Section \ref{ssec:nverr} that if the data is heterogeneous, the gradient norm $\|\nabla f(\hat{x}^t)\|$ would be always away from $0$ for Byzantine-robust distributed stochastic optimization -- this rarely happens in analyzing the complexity lower bounds of Byzantine-free distributed stochastic optimization.
We call this gap the \textbf{Byzantine error}. The Byzantine error refers to the residual that does not vanish regardless of the numbers of iterations and oracle queries, quantifying the robustness of a Byzantine-robust distributed stochastic optimization method.
The Byzantine error generated by \(\mathsf M \in \mathcal{M}\) depends on the function \( f \in \mathcal{F} \), the stochastic gradient oracle \(\mathsf O \in \mathcal{O}\), and the robust aggregator \(\mathsf A \in \mathcal{A}\). If any of these three elements changes, the Byzantine error generated by \( \mathsf M \) also varies. Therefore, we denote
\begin{align}\label{nonvanishing}
\epsilon_{\rm bzt}^{\mathsf M}(f,\mathsf O, \mathsf A) := \inf_{t \in \mathbb N} \left\{\E[\|\nabla f(\hat{x}^t)\| \mid \mathsf O, \mathsf A]\right\},
\end{align}
\noindent where $\hat x^t$ is the output of $\mathsf M \in \mathcal M$ after $t$ iterations.
The rest of the error is termed as the \textbf{optimization error}, which is vanishing and can be reduced to zero when increasing the number of iterations or oracle queries to infinity.

We define the oracle query complexity of the method class $\mathcal M$ on the function class $\mathcal F$, the stochastic gradient oracle class $\mathcal O$, and the robust aggregator class $\mathcal A$, to ensure that the optimization error does not exceed a given \(\epsilon\), as
\begin{align}\label{vanishing}
\mathcal K_\epsilon(\mathcal{M,A,F,O}) := \inf_{\mathsf M \in \mathcal M} \sup_{f,\mathsf O,\mathsf A \in \mathcal{F,O,A}} \inf \left\{ K \mid \E[\|\nabla f(\tilde x^K)\|] \leq  \epsilon_{\rm bzt}^{\mathsf M}(f, \mathsf O, \mathsf A) + \epsilon \right\},
\end{align}
where $\tilde x^K$ is the output $\mathsf M \in \mathcal M$ after $K$ oracle queries. Throughout this paper, given \(f \in \mathcal{F}\), \(\mathsf O \in \mathcal{O}\) and \(\mathsf A \in \mathcal{A}\), we call $x \in \mathbb{R}^d$ an {\it $(\epsilon_{\rm bzt}^{\mathsf M}(f, \mathsf O, \mathsf A),\epsilon)$-stationary point} if $\E[\|\nabla f(x)\|] \leq  \epsilon_{\rm bzt}^{\mathsf M}(f, \mathsf O, \mathsf A) + \epsilon$.
%
%
%
%
%
%
%

In this section, we are going to analyze the lower bounds of the Byzantine error in \eqref{nonvanishing} and the oracle query complexity in \eqref{vanishing}. We begin with showing that there is a non-vanishing Byzantine error through an example involving Byzantine nodes and heterogeneous data (\(\zeta^2 > 0\)) in Section \ref{ssec:nverr}. Then, we proceed to analyze the factors influencing the oracle query complexity when the data is homogeneous (\(\zeta^2 = 0\)). To be specific, Section \ref{ssec:verri} considers $\rho=\sigma^2=0$ to focus on the impact of the function class $\mathcal{F}$, Section \ref{ssec:verrs} sets \(\rho = 0\) but \(\sigma^2 > 0\) to highlight the impact of the stochastic gradient oracle class $\mathcal{O}$, while Section \ref{ssec:verrb} lets \(\rho > 0\) and \(\sigma^2 > 0\) so as to explore the impact of the robust aggregator class $\mathcal{A}$. Finally, summing up these results in Section \ref{ssec:flb} yields a lower bound, whose tightness will be proved in Section \ref{sec:ub}.

\subsection{Lower bound of Byzantine error}\label{ssec:nverr}

We start by analyzing the lower bound of the Byzantine error caused by data heterogeneity ($\zeta^2 > 0$), in the presence of the Byzantine nodes. We define this lower bound as
\begin{align}
\epsilon_{\rm bzt}:= \inf_{\mathsf M \in \mathcal M} \sup_{f,\mathsf O,\mathsf A \in \mathcal{F,O,A}} \epsilon_{\rm bzt}^{\mathsf M}(f,\mathsf O, \mathsf A).
\end{align}
The main idea of the analysis is to construct two problems with different objectives and different minima, $f_1 = \frac{1}{|\mathcal H_1|}\sum_{i \in \mathcal H_1} f_{1,i}$ and $f_2 = \frac{1}{|\mathcal H_2|}\sum_{i \in \mathcal H_2} f_{2,i}$, such that there exists a $(\delta_{\rm max},\rho)$-robust aggregator that yields the same result. Formally speaking, at any iteration $t$, any method $\mathsf M \in \mathcal M$, due to the same result $w^t$ from such a $(\delta_{\rm max},\rho)$-robust aggregator ${\mathsf A} \in \mathcal A$, is going to return the same iterate $x^{t+1}$. Therefore, any method $\mathsf M \in \mathcal M$ must inherently incur an error on at least one of the two problems. We emphasize that the error is due to the data heterogeneity (with which we are able to construct two problems having different objectives and different minima) and the Byzantine nodes (with which there exists a robust aggregator yielding the same result).

\begin{lemma}\label{le:nverr}
    Given $\zeta^2 > 0$ and $\delta \in [0,\delta_{\rm max}]$, there exist a distributed problem in the form of \eqref{prob-general} having at least $(1-\delta)n$ honest nodes with function $f \in \mathcal{F}$, and a $(\delta_{\rm max},\rho)$-robust aggre- gator $\mathsf A \in \mathcal A$, such that for any method $\mathsf M \in \mathcal{M}$, the Byzantine error is lower-bounded by
    \[
        \epsilon_{\rm bzt} = \inf_{\mathsf M \in \mathcal M} \|\nabla f(\tilde x)\| = \Omega(\rho^{1/2} \delta^{1/2} \zeta),
    \]
    where $\tilde x$ is the output of $\mathsf M$, irrelevant with the number of iterations and the number of oracle queries.
\end{lemma}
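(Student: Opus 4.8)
I would run the two-problem indistinguishability scheme announced just before the lemma. Fix an arbitrary $\mathsf M\in\mathcal M$ and exhibit two functions $f_1,f_2\in\mathcal F$ with distinct minimizers, a single oracle $\mathsf O\in\mathcal O$ (the deterministic oracle $\nabla F(x,\xi)\equiv\nabla f_i(x)$ already lies in $\mathcal O_{\sigma^2}$ for any $\sigma^2\ge0$), a single aggregator $\mathsf A\in\mathcal A$, and Byzantine strategies, such that running $\mathsf M$ on $(f_1,\mathsf O,\mathsf A)$ and on $(f_2,\mathsf O,\mathsf A)$ produces an \emph{identical transcript}: the same iterates $x^t$ are broadcast and the same aggregated gradients $w^t$ come back, hence the same output $\tilde x$ results. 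Since the adversary may keep whichever of $f_1,f_2$ is worse for $\tilde x$, the claim reduces to the triangle-inequality bound $\|\nabla f_1(\tilde x)\|+\|\nabla f_2(\tilde x)\|\ge L\,\|x_1^\star-x_2^\star\|$, i.e.\ to making the two minimizers far apart.

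\textbf{Construction.} I would work in $d=2$, letting coordinate $1$ carry the gap between the minimizers and coordinate $2$ supply the variance that Definition~\ref{d:agg} allows the aggregator to exploit. Write $N:=|\mathcal H|=(1-\delta)n$, $q:=|\mathcal B|=\delta n$ (pick $n$ so all relevant counts are integers; $q<N$ since $\delta<\tfrac12$), $r:=\zeta/L$, and $\gamma:=\rho^{1/2}\delta^{1/2}$. Each honest node holds a quadratic $f_{s,i}(x)=\tfrac L2\|x-c_{s,i}\|^2$ (which is $L$-smooth and bounded below), with centers placed, in problem $s=1$, at $(\gamma r,+r)$ for half of $\mathcal H$ and $(\gamma r,-r)$ for the other half, and in problem $s=2$ at $(-\gamma r,\pm r)$ likewise. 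Then $f_s(x)=\tfrac L2\|x-x_s^\star\|^2+\mathrm{const}$ with $x_1^\star=(\gamma r,0)$, $x_2^\star=(-\gamma r,0)$, and the honest-gradient dissimilarity equals $L^2r^2=\zeta^2$, so $f_s\in\mathcal F$. The Byzantine nodes of problem $1$ impersonate problem-$2$ honest nodes (centers $(-\gamma r,\pm r)$, $q/2$ each), and vice versa. Since every $\nabla f_{s,i}$ is affine and the honest batch points $\mathbf{x}^{(t)}$ are common, the linear estimator \eqref{re:p} of honest node $i$ equals $LA_t(\bar x_t-c_{s,i})$ for a scalar $A_t\ge\alpha_{\rm min}>0$ and a $\bar x_t\in\mathbb R^2$ read off the transcript; hence the $n$ vectors reaching the server take, in coordinate $1$, exactly two values $LA_t(\bar x_{t,1}\mp\gamma r)$ with multiplicities $\{N,q\}$ in problem $1$ and $\{q,N\}$ in problem $2$, while in coordinate $2$ the multiset $\{LA_t(\bar x_{t,2}\pm r)\}$ (each value $n/2$ times) is literally identical in both problems.

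\textbf{Aggregator and conclusion.} I would define $\mathsf A$ to return, on any input whose first coordinate takes two distinct values with multiplicities $\{N,q\}$ (in either order), their midpoint in coordinate $1$ together with the coordinate-$2$ median, and to fall back on a fixed $(\delta_{\max},\rho_{\min})$-robust aggregator otherwise (which is also $(\delta_{\max},\rho)$-robust, as $\rho\ge\rho_{\min}$). On our inputs this yields $LA_t\bar x_t$ in both problems, so the transcripts coincide, $\mathsf M$'s output $\tilde x$ is common, and for the intended honest subset the squared deviation of $\mathsf A$ is $L^2A_t^2\gamma^2r^2=\rho\delta\cdot(L^2A_t^2r^2)$, exactly $\rho\delta$ times the honest variance, so Definition~\ref{d:agg} holds with equality. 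Since $\tilde x$ cannot lie within $\gamma r$ of both $x_1^\star$ and $x_2^\star$,
\[
\sup_{f,\mathsf O,\mathsf A}\epsilon_{\rm bzt}^{\mathsf M}\ \ge\ \max_{s\in\{1,2\}}\E\|\nabla f_s(\tilde x)\|\ \ge\ \tfrac12\E\big[\|\nabla f_1(\tilde x)\|+\|\nabla f_2(\tilde x)\|\big]\ =\ \tfrac L2\,\E\big[\|\tilde x-x_1^\star\|+\|\tilde x-x_2^\star\|\big]\ \ge\ L\gamma r\ =\ \rho^{1/2}\delta^{1/2}\zeta,
\]
and taking the infimum over $\mathsf M$ gives $\epsilon_{\rm bzt}=\Omega(\rho^{1/2}\delta^{1/2}\zeta)$.

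\textbf{Main obstacle.} The delicate step is certifying $\mathsf A\in\mathcal A$: the $(\delta_{\max},\rho)$-robustness inequality must hold for \emph{every} size-$(1-\delta)n$ designation of honest nodes among the contrived inputs, including ones that call the Byzantine cluster, or a coordinate-$2$–skewed mixture, "honest." One verifies that any such designation forces additional coordinate-$1$ spread and/or coordinate-$2$ skew into its empirical variance, so the inequality still goes through — at worst after replacing $\gamma$ by $c\,\gamma$ for an absolute constant $c\in(0,1]$ depending only on $\delta_{\max}<\tfrac12$, which affects only the hidden constant in $\Omega(\cdot)$. A secondary point is the $\inf_t$ in \eqref{nonvanishing}: with the single declared output $\tilde x$ the argument above is complete, and if one scores the whole iterate sequence instead, one uses that the common transcript is that of the strongly convex surrogate $\tfrac L2\|\cdot\|^2$ centered at $0$ (equidistant from $x_1^\star$ and $x_2^\star$) to control where $\hat x^t$ can accumulate.
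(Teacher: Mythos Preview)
Your overall scheme---two indistinguishable problems fed through a single biased aggregator, then a triangle-inequality lower bound on $\max_s\|\nabla f_s(\tilde x)\|$---is exactly the paper's, but the implementations diverge and the paper's is considerably simpler.

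The paper works in $d=1$, uses \emph{no} Byzantine nodes, and places the heterogeneity in the \emph{same} coordinate as the minimizer gap. Concretely, in problem~$1$ the first $\delta n$ honest nodes carry $f_{1,i}(x)=\tfrac12x^2-\delta^{-1/2}\zeta x$ and the remaining $(1-\delta)n$ carry $\tfrac12x^2$, so the honest-gradient variance is $(1-\delta)\zeta^2$. Problem~$2$ is obtained by adding the linear term $\alpha_{\min}\rho^{1/2}\delta^{1/2}\zeta\,x$ to \emph{every} local function, so $\bar w_2^t=\bar w_1^t+\alpha_{\min}\rho^{1/2}\delta^{1/2}\zeta$. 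The aggregator is declared to output the common midpoint $\bar w_1^t+\tfrac{\alpha_{\min}}{2}\rho^{1/2}\delta^{1/2}\zeta$ on either input set; its squared bias $\tfrac{\alpha_{\min}^2}{4}\rho\delta\zeta^2$ sits inside the $\rho\delta\cdot(\text{honest variance})$ budget of Definition~\ref{d:agg} because $\tfrac14\le 1-\delta$. No auxiliary coordinate, no Byzantine impersonation, and the two input multisets are genuinely distinct (a pure shift), so defining the aggregator separately on them raises no consistency issue.

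Your $2$D construction with Byzantine swapping reaches the same order bound but buys you the obstacle you flag: you must certify $(\delta_{\max},\rho)$-robustness against \emph{every} size-$N$ honest designation, and designations skewed in coordinate~$2$ (e.g.\ loading up on $+r$ centers) can push the empirical variance well below $r^2$ while the coordinate-$1$ deviation stays near $\gamma r$. Your ``at worst $c\gamma$'' claim therefore needs a real case analysis (tracking how much coordinate-$1$ spread any such designation is forced to absorb), which you have not done and which is not obviously uniform as $\delta\uparrow\tfrac12$. The paper's one-dimensional shift construction dodges most of this: since heterogeneity and bias live in the same direction, the variance that licenses the bias is produced by the very structure that separates the minimizers, rather than by an orthogonal gadget the adversary might ignore. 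If you want to make your write-up airtight, the cleanest move is to drop the Byzantine nodes and coordinate~$2$ entirely and adopt the constant-shift trick.
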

\begin{proof}
    See Appendix \ref{proof:nverr}.
\end{proof}

Lemma \ref{le:nverr} indicates that achieving the exact minimum of \eqref{prob-general} is unattainable when the Byzantine nodes are present and the data is heterogeneous, consistent with the findings reported in \citep{karimireddy2020byzantine}. The major difference between our work and \citep{karimireddy2020byzantine} lies in that the latter defines the lower bound of the Byzantine error as \( \inf_{\mathsf{M}, \mathsf{A'} \in \mathcal{M, A'}} \sup_{f, \mathsf{O} \in \mathcal{F, O}} \epsilon_{\rm bzt}^{\mathsf{M, A'}} (f, \mathsf{O}) \). Therein, \(\mathcal A'\) represents the set of identity-independent robust aggregators whose outputs are independent on the identities of nodes and \( \epsilon_{\rm bzt}^{\mathsf{M, A'}}(f, \mathsf{O}) \) \(:= \inf_{t \in \mathbb{N}} \left\{\mathbb{E}[\|\nabla f(\hat{x}^t)\| \mid \mathsf{O}]\right\} \). Therefore, the lower bound \( \Omega (\delta^{1/2}\zeta) \) established in \citep{karimireddy2020byzantine} only shows the impacts of the estimated fraction of Byzantine nodes $\delta$ and the data heterogeneity $\zeta^2$, while our result also reveals how the robustness coefficient of robust aggregator $\rho$ affects the lower bound. Note that both results are irrelevant to the stochastic gradient variance \(\sigma^2\). In fact, these two lower bounds are tight in their corresponding setups (see Section \ref{sec:ub} for the tightness of our lower bounds), and the influence of \(\sigma^2\) can be eliminated through proper variance reduction techniques.



\subsection{Lower bound of oracle query complexity: Function} \label{ssec:verri}

In this subsection, we investigate the lower bound of the oracle query complexity influenced by the function $f \in \mathcal F$. To this end, we consider the simplest case \(\zeta^2 = \rho = \sigma^2 = 0\) so as to focus on the impact of $\mathcal F$.
Observe that since we assume $\rho=0$, the robust aggregator \(\mathsf A\) is ideal and averages the inputs of the honest nodes. Besides, due to $\zeta^2=\sigma^2=0$, the behaviors of the honest nodes are exactly the same.
Therefore, this case reduces to single-node deterministic optimization and the classical lower bounds are applicable. For strongly convex functions,
according to \citep{nesterov2003introductory}, we have the following lemma.
\begin{lemma}\label{le:lbsc}
    Given \(\zeta^2 = \rho = \sigma^2 = 0\) and \(\delta \in [0,\delta_{\rm max}]\), there exists a distributed problem in the form of \eqref{prob-general} having at least $(1-\delta)n$ honest nodes with function $f \in \mathcal{F}$ and $\mu$-strongly convex $\{f_i(x)\}_{i\in \mathcal H}$, such that for any method $\mathsf M \in \mathcal M$, to achieve a $(0,\epsilon)$-stationary point, the oracle query complexity is at least
    \[
    K = \Omega \left(\sqrt{\kappa}\log\frac{\mu R}{\epsilon}\right),
    \]
    where $\kappa = \frac{L}{\mu} $ is the condition number and $R = \|x^0 - \arg\min_x f(x)\|$.
\end{lemma}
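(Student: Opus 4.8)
The plan is to strip away the Byzantine and stochastic structure and reduce to a classical single-node deterministic first-order method with a linear-span restriction, to which Nesterov's worst-case quadratic applies directly. First I would note that the three simplifying hypotheses collapse the apparatus of Section~\ref{sec:2}: since $\sigma^2=0$ and $\zeta^2=0$, every honest node's oracle returns $\nabla F(x,\xi_i)=\nabla f_i(x)=\nabla f(x)$ exactly, so all honest nodes form the identical estimator $w_i^t=\sum_{j,l}\alpha^{(j,l)}\nabla f(x_i^{(j,l)})$; and since $\rho=0$, Definition~\ref{d:agg} forces $w^t=\mathsf A(w_1^t,\dots,w_n^t)=\bar w^t=w_i^t$, irrespective of what the Byzantine nodes upload. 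Hence, on such an instance, any $\mathsf M\in\mathcal M$ produces iterates with $x^{t+1}\in{\rm span}(x^0,\dots,x^t,w^0,\dots,w^t)$, where each $w^j$ is a nonnegative linear combination of $\{\nabla f(x^{(i,l)}):i\le j,\ l\in[m]\}$ and each query point $x^{(j,l)}\in{\rm span}(x^0,\dots,x^j)$ --- exactly a first-order linear-span method run on $f$, and since the Byzantine error vanishes, an $(0,\epsilon)$-stationary point is just a point with $\E\|\nabla f(x)\|\le\epsilon$.

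Then I would instantiate $f$ as Nesterov's worst-case $\mu$-strongly convex, $L$-smooth quadratic on $\mathbb R^d$ (taking $d$ as large as needed, which is allowed since $\mathcal F$ ranges over all dimensions), translated so that $x^0=0$ and $x^\star:=\arg\min_x f(x)$ has $\|x^\star\|=R$; this $f$ is lower bounded and has the ``zero-chain'' property that $[\nabla f(x)]_j=0$ for all $j>k+1$ whenever $x_j=0$ for all $j>k$. Setting $f_i=f$ for every $i\in\mathcal H$ makes $f\in\mathcal F$ with $\zeta^2=0$ and $\{f_i\}$ $\mu$-strongly convex. A short induction on $t$ then gives ${\rm span}(x^0,\dots,x^t)\subseteq{\rm span}(e_1,\dots,e_t)$: the key point is that the $m$ query points $x^{(t,l)}$ all lie in the current span ${\rm span}(e_1,\dots,e_{s_t})$, so by the zero-chain property their gradients --- and therefore $w^t$ and $x^{t+1}$ --- lie in ${\rm span}(e_1,\dots,e_{s_t+1})$, so the active coordinate set grows by at most one per iteration no matter how large $m$ is. Since one iteration costs at least one oracle query, after $K$ oracle queries the output $\tilde x^K$ (an arbitrary element of the iterate span) is supported on the first $K$ coordinates.

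Next I would invoke the classical estimate for this $f$: any $x$ supported on the first $k$ coordinates satisfies $\|x-x^\star\|\ge q^k R$ with $q=\frac{\sqrt\kappa-1}{\sqrt\kappa+1}$, hence by strong convexity $f(x)-f(x^\star)\ge\frac{\mu}{2}q^{2k}R^2$, and by the inequality $\|\nabla f(x)\|^2\ge 2\mu\bigl(f(x)-f(x^\star)\bigr)$ valid for $\mu$-strongly convex $f$, we get $\|\nabla f(x)\|\ge\mu q^k R$. Applying this with $k\le K$ and $q<1$ yields $\|\nabla f(\tilde x^K)\|\ge\mu q^K R$ surely, hence $\E\|\nabla f(\tilde x^K)\|\ge\mu q^K R$. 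For this to be at most $\epsilon$ we need $q^K\le\epsilon/(\mu R)$, i.e. $K\ge\log(\mu R/\epsilon)/\log(1/q)$; since $\log(1/q)=-\log\bigl(1-\tfrac{2}{\sqrt\kappa+1}\bigr)=O(1/\sqrt\kappa)$, this is $K=\Omega\bigl(\sqrt\kappa\log\tfrac{\mu R}{\epsilon}\bigr)$. As the instance is fixed, this holds for every $\mathsf M\in\mathcal M$, which is the claim.

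I expect the only genuine obstacle to be the reduction step: one must verify that the extra generality of the class $\mathcal M$ over the textbook first-order model --- batched multi-point queries $x^{(t,l)}$, gradient estimators that are arbitrary nonnegative linear combinations over all past queries, and a server update mixing all historical iterates and aggregated gradients --- still cannot escape the ${\rm span}(e_1,\dots,e_t)$ trap faster than one coordinate per iteration, and one must correctly relate the iteration count to the oracle-query count through the batch size $m$. Everything downstream --- the zero-chain property, the $q^k$ lower bound on the distance to the optimum, and the passage to the gradient norm --- is standard, and I would simply cite \citep{nesterov2003introductory}.
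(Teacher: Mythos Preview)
Your proposal is correct and matches the paper's approach: the paper itself does not prove this lemma but simply observes that $\rho=\sigma^2=\zeta^2=0$ collapses the setting to single-node deterministic first-order optimization and then cites \citep{nesterov2003introductory} directly. You spell out the reduction (that the aggregator returns $\bar w$, the honest nodes behave identically, and the batched linear-span structure of $\mathcal M$ still advances the zero-chain by at most one coordinate per iteration) and the passage from $\|x-x^\star\|$ to $\|\nabla f(x)\|$ via strong convexity, all of which the paper leaves implicit in the citation.
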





For non-convex functions, by \citep{carmon2021lower}, we have the following lemma.
\begin{lemma}\label{le:lbc}
    Given \(\zeta^2 = \rho = \sigma^2 = 0\) and \(\delta \in [0,\delta_{\rm max}]\), there exists a distributed problem in the form of \eqref{prob-general} having at least $(1-\delta)n$ honest nodes with function $f \in \mathcal{F}$ and non-convex $\{f_i(x)\}_{i\in \mathcal H}$, such that for any method $\mathsf M \in \mathcal M$, to achieve a $(0,\epsilon)$-stationary point, the oracle query complexity is at least
    \[
    K = \Omega\left(\frac{L\Delta}{\epsilon^2}\right),
    \]
    where $\Delta = f(x^0) - \inf_x f(x)$.
\end{lemma}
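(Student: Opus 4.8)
The plan is to reduce this distributed, Byzantine-robust setting to classical single-node deterministic non-convex optimization, and then invoke the known lower bound of \citep{carmon2021lower}. The key observation, already noted in the text preceding the lemma, is that when $\zeta^2 = \rho = \sigma^2 = 0$ the problem collapses: the robust aggregator $\mathsf A$ with $\rho = 0$ exactly averages the honest inputs by \eqref{eq:agg}, the oracle with $\sigma^2 = 0$ returns the true gradient $\nabla F(x,\xi_i^t) = \nabla f_i(x)$, and $\zeta^2 = 0$ forces $\nabla f_i(x) = \nabla f(x)$ for all $i \in \mathcal H$, so every honest node computes the identical gradient estimator $w_i^t$, and hence $w^t = \nabla f(\hat{x}^t)$-type quantities (more precisely, $w^t$ is the same linear combination of past true gradients on all nodes). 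Consequently any $\mathsf M \in \mathcal M$ run on such an instance is indistinguishable from a single-node first-order method: the server's iterate update $x^{t+1} \in \mathrm{span}(x^0,\dots,x^t,w^0,\dots,w^t)$ is exactly the span condition of a deterministic first-order method, and each oracle query corresponds to one gradient evaluation.

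The concrete steps I would carry out are: (1) Fix the dimension $d$ large (to be chosen as a function of $L$, $\Delta$, $\epsilon$) and take the hard instance $h = h_T$ from \citep{carmon2021lower} — the standard ``chain-like'' non-convex function that is $L$-smooth, has $h(0) - \inf h \le \Delta$, and for which any deterministic first-order method needs $\Omega(L\Delta/\epsilon^2)$ gradient queries to reach a point with $\|\nabla h(x)\| \le \epsilon$. (2) Build the distributed instance by setting $f_i = h$ for every honest node $i \in \mathcal H$ (so $\zeta^2 = 0$ holds with equality, and $f = h$, $\Delta = f(x^0) - \inf_x f(x)$ with $x^0 = 0$); assign the remaining (at most $\delta n$) nodes arbitrarily as Byzantine — they play no role because we let them behave honestly, and the aggregator still returns the honest average. (3) Verify that the resulting $f$ lies in $\mathcal F$ (Assumptions \ref{ass:basic}, \ref{ass:i}) with the stated constants and that the oracle $\mathsf O$ (exact gradients) lies in $\mathcal O_{0}$, and pick any $(\delta_{\max},0)$-robust aggregator (e.g. the mean restricted to honest indices, which trivially satisfies \eqref{eq:agg} with $\rho=0$). (4) Show the trajectory-matching: by induction on $t$, the sequence $\{x^t\}$ produced by any $\mathsf M \in \mathcal M$ on this instance coincides with the sequence produced by some deterministic first-order method applied to $h$, because $w^t$ is a fixed nonnegative linear combination (with $\sum \alpha^{(j,l)} \ge \alpha_{\min} > 0$) of $\{\nabla h(x^{(j,l)})\}$, all of which lie in the span the single-node method is allowed to use. (5) Invoke the \citep{carmon2021lower} lower bound on that induced single-node method to conclude $K = \Omega(L\Delta/\epsilon^2)$ queries are needed before $\E[\|\nabla f(\tilde x^K)\|] \le \epsilon = 0 + \epsilon$, which is the $(0,\epsilon)$-stationarity target.

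The main obstacle is the trajectory-matching / ``zero-respecting'' bookkeeping in step (4): one must make sure that the gradient-estimator form \eqref{re:p}, the batch structure $m$, the multi-point evaluations $x^{(t,l)} \in \mathrm{span}(x^0,\dots,x^t)$, and the server's allowance to use \emph{all} historical iterates and aggregated gradients together still fall inside the class of first-order methods for which the Carmon–Duchi–Hinder–Sidford lower bound applies — i.e. that no method in $\mathcal M$ can ``cheat'' by exploiting $m$ parallel evaluations to accelerate beyond the $\Omega(L\Delta/\epsilon^2)$ bound. This is handled by noting the lower bound of \citep{carmon2021lower} is stated against the general class of deterministic algorithms whose queries lie in the span of previously observed gradients (it is robust to batching up to constant factors, and batching by $m$ only rescales the \emph{iteration} count, not the \emph{oracle-query} count, since each of the $m$ evaluations is itself one query). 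A secondary, purely technical point is checking the smoothness/initial-gap normalization of the Carmon et al. construction matches our $L$ and $\Delta$ exactly, which is a standard rescaling $x \mapsto \lambda x$, $h \mapsto \lambda' h$ argument; I would relegate it to the appendix.
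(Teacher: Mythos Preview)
Your proposal is correct and follows exactly the paper's approach: the paper does not give a separate proof of this lemma but simply observes (in the paragraph preceding Lemma~\ref{le:lbsc}) that with $\zeta^2=\rho=\sigma^2=0$ the setting reduces to single-node deterministic optimization, and then directly cites \citep{carmon2021lower} for the non-convex lower bound. Your steps (1)--(5) spell out this reduction in more detail than the paper does, but the underlying argument is the same.
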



\subsection{Lower bound of oracle query complexity: Stochastic gradient oracle} \label{ssec:verrs}
In this subsection, we investigate the lower bound of the oracle query complexity influenced by the stochastic gradient oracle. We maintain \(\zeta^2 = \rho = 0\), but consider the case that \(\mathsf O \in \mathcal{O}\) provides noisy stochastic gradients with variance \(\sigma^2 > 0\). This case is essentially a distributed variant of single-node stochastic optimization, whose lower bounds have been analyzed in \citep{foster2019complexity} for strongly convex functions and \citep{arjevani2023lower} for non-convex functions. The results are shown in the following lemmas.

\begin{lemma}\label{le:verrs}
    Given $\zeta^2 = \rho = 0$, $\sigma^2 > 0$ and \(\delta \in [0,\delta_{\rm max}]\), there exist a distributed problem in the form of \eqref{prob-general} having at least $(1-\delta)n$ honest nodes with function $f \in \mathcal{F}$ and $\mu$-strongly convex $\{f_i(x)\}_{i\in \mathcal H}$, and a stochastic gradient oracle $\mathsf O \in \mathcal O$, such that for any method $\mathsf M \in $ $\mathcal M$, to obtain a $(0,\epsilon)$-stationary point, the expected oracle query complexity is at least
    \[
    K = \Omega \left( \frac{\sigma^2}{(1-\delta)n\epsilon^2} \right).
    \]
\end{lemma}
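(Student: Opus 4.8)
The plan is to reduce the distributed problem to a single-node stochastic optimization problem with an effective variance that has been reduced by a factor of $(1-\delta)n$, and then invoke the classical lower bound for strongly convex stochastic optimization from \citep{foster2019complexity}. The key observation is that when $\zeta^2=\rho=0$, all honest nodes share the same objective $f_i = f$ and the robust aggregator is ideal, so it simply outputs $\bar w = \frac{1}{|\mathcal H|}\sum_{i\in\mathcal H} w_i$. Since there are at least $(1-\delta)n$ honest nodes, each querying the oracle once per iteration and each computing a gradient estimator of the form \eqref{re:p}, one oracle query (all honest nodes sampling once) supplies the server with an average of at least $(1-\delta)n$ i.i.d.\ unbiased gradient estimates. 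Averaging these reduces the per-iteration noise variance from $\sigma^2$ to at most $\sigma^2/((1-\delta)n)$.

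First I would construct the hard instance: take $f_i = f$ for all $i\in\mathcal H$, where $f$ is the classical worst-case strongly convex function used in the single-node stochastic lower bound of \citep{foster2019complexity} (a quadratic-type construction whose stochastic gradient oracle has variance exactly $\sigma^2$ but whose averaged oracle over $(1-\delta)n$ samples has variance $\sigma^2/((1-\delta)n)$). Set $\zeta^2=0$ (trivially satisfied since all $f_i$ coincide) and let the robust aggregator be the mean over all honest inputs, which is $(\delta_{\max},0)$-robust. Next I would argue that any method $\mathsf M\in\mathcal M$ running on this distributed instance, after $K$ oracle queries, can be simulated by a single-node method that makes $(1-\delta)n \cdot K$ queries to the variance-$\sigma^2$ oracle — or equivalently $K$ queries to the variance-$\sigma^2/((1-\delta)n)$ oracle — because the only information the server ever receives about $f$ is through the aggregated (averaged) gradients, and the span/linear-combination structure in \eqref{re:p} is preserved under this reduction.

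Then I would apply the single-node lower bound: to reach $\E[\|\nabla f(\tilde x^K)\|]\le \epsilon$ (here $\epsilon_{\rm bzt}^{\mathsf M}=0$ since $\zeta^2=0$ by Lemma \ref{le:nverr}'s construction being vacuous, and $\rho=0$), one needs $K = \Omega\big(\frac{\sigma^2/((1-\delta)n)}{\epsilon^2}\big) = \Omega\big(\frac{\sigma^2}{(1-\delta)n\epsilon^2}\big)$ oracle queries in the distributed model. One subtlety is matching the precise accuracy notion: the cited bounds are typically stated for $\E[f(\tilde x^K)-f^\star]$ or $\E[\|\nabla f(\tilde x^K)\|^2]$, so I would use strong convexity and smoothness to convert between $\|\nabla f(x)\|$, $\|\nabla f(x)\|^2$, and the function-value gap, absorbing the condition-number dependence into constants (which $\Omega(\cdot)$ hides).

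The main obstacle will be rigorously justifying the simulation/reduction step — namely, that no method in $\mathcal M$ can extract more information per oracle query than the averaged gradient provides, so that the distributed model is genuinely no easier than the single-node model with the reduced variance. This requires carefully tracking that (i) the iterates $x^{t+1}$ lie in the span of past iterates and past \emph{aggregated} gradients $w^0,\dots,w^t$ (as stipulated in the definition of $\mathcal M$), so the adversary controlling the oracle's randomness faces the same information-theoretic bottleneck as in the single-node setting, and (ii) the batch parameter $m$ and the coefficients $\alpha^{(j,l)}$ cannot be exploited to beat the variance-reduction factor beyond the $(1-\delta)n$ already credited — which holds because within a single oracle query each honest node draws only $m$ fresh samples and the total sample count after $K$ queries is $(1-\delta)n\cdot m\cdot K$ up to the batch bookkeeping that \citep{foster2019complexity}'s bound already accommodates. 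Once the reduction is in place, the rest is a direct citation.
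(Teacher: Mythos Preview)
Your proposal is correct and takes essentially the same approach as the paper. In fact the paper omits the proof entirely, stating only that ``the proofs are similar to those of single-node stochastic optimization, but each iteration involves a mini-batch of $(1-\delta)n$ stochastic gradients other than one, such that the variance is accordingly reduced''; your write-up fleshes out precisely this reduction-to-single-node-with-reduced-variance argument and the citation to \cite{foster2019complexity}.
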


\begin{lemma}\label{le:verrs-nc}
    Given $\zeta^2 = \rho = 0$, $\sigma^2 > 0$ and \(\delta \in [0,\delta_{\rm max}]\), there exist a distributed problem in the form of \eqref{prob-general} having at least $(1-\delta)n$ honest nodes with function $f \in \mathcal{F}$ and non-convex $\{f_i(x)\}_{i\in \mathcal H}$, and a stochastic gradient oracle $\mathsf O \in \mathcal O$, such that for any method $\mathsf M \in \mathcal M$, to obtain a $(0,\epsilon)$-stationary point, the expected oracle query complexity is at least
    \[
    K = \Omega \left( \frac{L\Delta\sigma^2}{(1-\delta)n\epsilon^4} \right).
    \]
\end{lemma}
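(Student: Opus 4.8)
The plan is to reduce the distributed problem to a single-node non-convex stochastic optimization problem where the classical lower bound of \citep{arjevani2023lower} applies, and then to track how the effective number of honest nodes $|\mathcal H| \ge (1-\delta)n$ improves the variance. Since $\zeta^2 = 0$, all honest functions are identical, $f_i = f$ for $i \in \mathcal H$; since $\rho = 0$, the robust aggregator $\mathsf A \in \mathcal A$ reproduces the honest average $\bar w = (1/|\mathcal H|)\sum_{i\in\mathcal H} w_i$ exactly (the Byzantine contributions are neutralized). Hence, from the server's viewpoint, every iteration the method sees an aggregated gradient estimator formed from a linear combination of $|\mathcal H| \cdot m$ i.i.d.\ stochastic gradient samples at points in $\mathrm{span}(x^0,\dots,x^t)$, each sample having variance at most $\sigma^2$. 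The crucial observation is that one ``oracle query'' in the sense of \eqref{vanishing} corresponds to all honest nodes sampling once, i.e.\ to $|\mathcal H| \ge (1-\delta)n$ independent fresh samples; so $K$ oracle queries yield $\Theta((1-\delta)n\cdot m \cdot K)$ samples with per-sample variance $\sigma^2$, which is information-theoretically equivalent to a single-node stochastic oracle with variance $\sigma^2 / ((1-\delta)n)$ queried $\Theta(mK)$ times (or equivalently with $\sigma^2$ but $(1-\delta)nmK$ queries).

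First I would invoke the single-node lower bound: by \citep{arjevani2023lower}, for the class of $L$-smooth functions with $\Delta = f(x^0) - \inf_x f$ and a stochastic gradient oracle of variance $\sigma_0^2$, any method making $N$ oracle queries cannot in general guarantee $\E[\|\nabla f(\hat x^N)\|] \le \epsilon$ unless $N = \Omega(L\Delta\sigma_0^2/\epsilon^4 + L\Delta/\epsilon^2)$; we only need the first (stochastic) term here. Next I would construct the hard instance: take the $d$-dimensional ``worst-case'' non-convex function $f$ from that reference (the chain-like construction of \citealt{carmon2020lower} equipped with the probabilistic oracle of \citealt{arjevani2023lower}), assign it to every honest node $f_i = f$, and let the number of honest nodes be exactly $\lceil (1-\delta)n\rceil$ with the remaining nodes Byzantine but inert (they simply send $\bar w$, which is consistent with a $\rho=0$ aggregator). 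Because the honest nodes are identical and the aggregator is ideal, any $\mathsf M \in \mathcal M$ running on this instance is simulable by a single-node method against an oracle of effective variance $\sigma^2/|\mathcal H| \le \sigma^2/((1-\delta)n)$ using $\Theta(mK)$ effective queries; plugging $\sigma_0^2 = \sigma^2/((1-\delta)n)$ and $N = \Theta(mK)$ into the single-node bound and solving for $K$ gives $K = \Omega\!\big(L\Delta\sigma^2/((1-\delta)n\,m\,\epsilon^4)\big)$. Taking $m$ to be a constant (the statement is about oracle query complexity, not sample complexity, and the method class allows $m=1$) yields exactly $K = \Omega(L\Delta\sigma^2/((1-\delta)n\epsilon^4))$.

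The main obstacle, and the step requiring the most care, is the simulation/reduction argument: one must verify that the linear-combination structure \eqref{re:p} of the honest gradient estimators, together with the span restrictions on $x^{(t,l)}$ and on the server update $x^{t+1}\in\mathrm{span}(x^0,\dots,x^t,w^0,\dots,w^t)$, does not let the method extract more information than a single-node method with the pooled oracle — in particular, that averaging fresh i.i.d.\ samples across $|\mathcal H|$ honest nodes genuinely reduces the effective variance by a factor $|\mathcal H|$ rather than merely by the per-point sample count, and that the probabilistic (``zero-chain'') oracle of \citep{arjevani2023lower} can be faithfully realized as an oracle $\mathsf O \in \mathcal O$ satisfying Assumption \ref{ass:u} with variance $\sigma^2$ when queried independently across nodes and iterations. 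This is a standard but delicate information-theoretic argument; once it is in place, the arithmetic of substituting the reduced variance into the known single-node bound is routine. A secondary subtlety is the boundary case $\delta = 0$, which is permitted by the hypothesis $\delta \in [0,\delta_{\max}]$ and simply recovers the $n$-node Byzantine-free stochastic lower bound $\Omega(L\Delta\sigma^2/(n\epsilon^4))$.
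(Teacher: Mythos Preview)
Your approach is essentially correct and matches the paper's own treatment: the paper simply remarks that with $\zeta^2=\rho=0$ the problem reduces to single-node non-convex stochastic optimization with a mini-batch of $(1-\delta)n$ gradients per query (hence effective variance $\sigma^2/((1-\delta)n)$), invokes \citep{arjevani2023lower}, and omits further details.

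There is, however, a bookkeeping slip in your counting. By the paper's convention one ``oracle query'' already means all $|\mathcal H|$ honest nodes sampling once, so $K$ oracle queries correspond to $K$ calls to the pooled variance-$\sigma^2/|\mathcal H|$ oracle (equivalently $|\mathcal H|\cdot K$ individual samples), \emph{not} $(1-\delta)n\cdot m\cdot K$ samples or $mK$ pooled calls. The spurious $m$ should never appear, and your fix ``take $m$ to be a constant'' is not a legitimate step in a lower-bound argument (the method, not you, chooses $m$); fortunately, once the counting is corrected the $m$ simply vanishes and the bound $K=\Omega\big(L\Delta\sigma^2/((1-\delta)n\epsilon^4)\big)$ follows directly.
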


The proofs are similar to those of single-node stochastic optimization, but each iteration involves a mini-batch of $(1-\delta)n$ stochastic gradients other than one, such that the variance is accordingly reduced. Thus, we omit the proofs.

\subsection{Lower bound of oracle query complexity: Robust aggregator} \label{ssec:verrb}
In this subsection, we analyze the lower bound of the oracle query complexity influenced by the robust aggregator. We maintain \(\zeta^2=0\), but investigate the case that the output of \(\mathsf A \in \mathcal{A}\) can be different from the average of the inputs from the honest nodes with $\rho>0$ and \(\mathsf O \in \mathcal{O}\) provides noisy stochastic gradients with variance \(\sigma^2 > 0\).



For strongly convex $\{f_i(x)\}_{i\in \mathcal H}$, we construct a one-dimensional problem to analyze the lower bound of the optimization error. In this problem, the gradient at the initial point $x^0$ is set to $\nabla f(x^0)=2\epsilon$. We will prove that, when the number of oracle queries is insufficient, there exists a robust aggregator \(\mathsf A \in \mathcal A\) that always returns $0$, causing any method \(\mathsf M \in \mathcal M\) to be stuck at $x^0$. However, our goal is to find a point \(x\) such that \(|\nabla f(x)| \leq \epsilon\). Clearly, \(x^0\) does not satisfy this condition since \(\nabla f(x^0) = 2\epsilon\). This implies that the number of oracle queries must be sufficient to escape from such an initial point.

%
\begin{lemma}\label{le:verrb}
    Given $\zeta^2=0$, $\rho>0$, $\sigma^2 > 0$, and \(\delta \in [0,\delta_{\rm max}]\), there exist a distributed problem in the form of \eqref{prob-general} having at least $(1-\delta)n$ honest nodes with function $f \in \mathcal{F}$ and $\mu$-strongly convex $\{f_i(x)\}_{i\in \mathcal H}$, a stochastic gradient oracle $\mathsf O \in \mathcal O$, and a robust aggregator ${\mathsf A} \in \mathcal A$, such that for any method $\mathsf M \in \mathcal{M}$, to obtain a $(0,\epsilon)$-stationary point, the expected oracle query complexity is at least
    \[
    K = \Omega \left(\frac{\rho\delta \sigma^2}{\epsilon^2}\right).
    \]
\end{lemma}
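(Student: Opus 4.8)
The plan is to reduce this to the information-theoretic obstruction behind Lemma \ref{le:verrs}, but with the Byzantine nodes and the aggregator doing the work of \emph{amplifying} the per-iteration noise floor by a factor proportional to $\rho\delta$. Concretely, I would construct a one-dimensional, homogeneous ($\zeta^2=0$) strongly convex instance: all honest nodes share the same $f_i = f$, say $f(x) = \tfrac{\mu}{2}(x - x^\star)^2$ (or a clipped/quadratic-near-origin variant so that it lies in $\mathcal F$), with $x^\star$ chosen so that $\nabla f(x^0) = 2\epsilon$. The stochastic oracle is scalar Gaussian-type noise with variance $\sigma^2$. The number of honest nodes is taken at its minimum admissible value, $|\mathcal H| = (1-\delta)n$, and there are $\delta n$ Byzantine nodes.

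The heart of the argument is the adversary–aggregator construction. At each iteration the honest nodes report gradient estimators $w_i^t$ which, by \eqref{re:p}, are nonnegatively weighted averages of past stochastic gradients; across $k$ oracle queries the honest side has seen $O((1-\delta)nk)$ samples, and the empirical mean of the honest estimators concentrates around $\nabla f(x^t)$ at scale $\sqrt{\sigma^2/((1-\delta)n \cdot (\text{number of queries}))}$ per node — but the \emph{honest-to-honest dispersion} $\frac{1}{|\mathcal H|}\sum_{i\in\mathcal H}\|w_i^t - \bar w^t\|^2$ is itself of order $\sigma^2$ (each single-query estimator fluctuates at scale $\sigma^2/\sum\alpha$-normalized, which is $\Theta(\sigma^2)$ in the worst admissible weighting with $\sum\alpha \ge \alpha_{\min}$). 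The $(\delta_{\max},\rho)$-robust aggregator is then \emph{allowed} by Definition \ref{d:agg} to output anything within distance $\sqrt{\rho\delta/|\mathcal H| \cdot \sum_i \|w_i^t-\bar w^t\|^2} = \Theta(\sqrt{\rho\delta}\,\sigma)$ of $\bar w^t$. I would have the Byzantine nodes place their vectors so as to make such an aggregator legitimately return $w^t = 0$ whenever the honest mean $\bar w^t$ still satisfies $|\bar w^t| \le \Theta(\sqrt{\rho\delta}\,\sigma)$; because the iterate update obeys $x^{t+1}\in\mathrm{span}(x^0,\dots,x^t,w^0,\dots,w^t)$, once every aggregated gradient is $0$ the method is frozen at $x^0$, where $|\nabla f(x^0)| = 2\epsilon > \epsilon$, so no $(0,\epsilon)$-stationary point is reached.

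It remains to count: the aggregator can be forced to return $0$ as long as $|\bar w^t| \lesssim \sqrt{\rho\delta}\,\sigma$, and $\bar w^t$ is the average of the honest estimators, whose deviation from $\nabla f(x^0) = 2\epsilon$ at iteration $t$ has standard deviation $\Theta(\sqrt{\sigma^2/((1-\delta)n\,k_t)})$ where $k_t$ is the running query count. So the honest mean stays below the $\sqrt{\rho\delta}\,\sigma$ threshold — and hence the adversary succeeds — until roughly $\sqrt{\sigma^2/((1-\delta)n\,k)} \lesssim \sqrt{\rho\delta}\,\sigma$, i.e. until $k = \Omega\!\big(\tfrac{1}{(1-\delta)n\,\rho\delta}\big)$; multiplying by the $(1-\delta)n$ honest oracle calls per query converts this into $\Omega(1/(\rho\delta))$... which is not yet the claimed bound, so the cleaner route — and the one I would actually execute — is the standard two-hypothesis (Le Cam / Assouad) argument: take two strongly convex instances whose gradients at $x^0$ differ by $\Theta(\epsilon)$, note that the aggregator's $\Theta(\sqrt{\rho\delta}\,\sigma)$ slack lets a single robust aggregator produce identical outputs on both instances until the honest samples can statistically distinguish a mean shift of $\epsilon$ against effective noise $\rho\delta\sigma^2$ (the noise the method actually "sees" through the aggregator), giving $K = \Omega(\rho\delta\sigma^2/\epsilon^2)$ by the usual $\epsilon^2/\sigma^2_{\text{eff}}$ sample-complexity lower bound, combined over the $(1-\delta)n$ honest nodes — but here the adversary sacrifices the $1/((1-\delta)n)$ speedup because it can corrupt the aggregated signal at every round, so the $(1-\delta)n$ factor does \emph{not} help and the bound is $\Omega(\rho\delta\sigma^2/\epsilon^2)$ as stated.

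The main obstacle I anticipate is making rigorous the claim that a \emph{single} $(\delta_{\max},\rho)$-robust aggregator $\mathsf A \in \mathcal A$ — one fixed function of its $n$ inputs — can simultaneously (i) return $0$ (or a fixed value) across the relevant iterates and (ii) genuinely satisfy the inequality \eqref{eq:agg} on \emph{all} input configurations it is fed, not just the typical ones; this requires either exhibiting a concrete aggregator (e.g. a coordinate-wise-median or center-clipping-flavored map tuned to this instance) or invoking an existence argument showing $\mathcal A$ is nonempty at the boundary of Definition \ref{d:agg}. A secondary technical point is controlling the honest dispersion $\frac1{|\mathcal H|}\sum_{i\in\mathcal H}\|w_i^t-\bar w^t\|^2$ from \emph{below} by $\Theta(\sigma^2)$ uniformly over the admissible estimator weights $\{\alpha^{(j,l)}\}$ subject only to $\sum\alpha^{(j,l)}\ge\alpha_{\min}$ — one must choose the oracle's noise model (e.g. fresh independent noise each query, bounded support to stay in $\mathcal O$) so that no linear combination can both be low-variance and unbiased without using many queries, which is exactly where the $1/\epsilon^2$ scaling enters.
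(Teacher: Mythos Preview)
Your setup is the paper's: a one-dimensional homogeneous strongly convex instance with $\nabla f(x^0)=2\epsilon$, and a $(\delta_{\max},\rho)$-robust aggregator that outputs $0$ so the iterate is frozen at $x^0$. But your analysis of \emph{when} the aggregator can legitimately output $0$ misidentifies the mechanism, and this is precisely where the bound comes from. The paper uses no Byzantine nodes at all (all $n$ nodes are honest; the slack is entirely in the choice of $\mathsf A\in\mathcal A$), no concentration-over-time argument, and no Le~Cam two-point reduction. At $x^0=0$ each honest estimator is $w_i=\sum_{l=1}^m\alpha^l\nabla F(0,\xi_i^l)$, so
\[
\E\big[\bar w^{\,2}\big]\;\le\;4\epsilon^2\Big(\sum_l\alpha^l\Big)^{\!2}+\frac{\sigma^2}{n}\sum_l(\alpha^l)^2,
\qquad
\E\Big[\frac{1}{n}\sum_i(w_i-\bar w)^2\Big]\;=\;\frac{n-1}{n}\,\sigma^2\sum_l(\alpha^l)^2.
\]
The aggregator may output $0$ whenever $|\bar w|^2\le\rho\delta\cdot\frac{1}{n}\sum_i(w_i-\bar w)^2$; to escape, the method needs (in expectation) $4\epsilon^2(\sum_l\alpha^l)^2>\big(\tfrac{\rho\delta(n-1)}{n}-\tfrac1n\big)\sigma^2\sum_l(\alpha^l)^2$, i.e.
\[
\frac{\big(\sum_l\alpha^l\big)^2}{\sum_l(\alpha^l)^2}\;=\;\Omega\!\left(\frac{\rho\delta\sigma^2}{\epsilon^2}\right),
\]
and Cauchy--Schwarz bounds the left side by $m$. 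That is the whole proof.

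Your first counting attempt confuses the stochastic fluctuation of $\bar w$ (which scales like $\sigma/\sqrt{nm}$ and is irrelevant here) with its \emph{signal} component $2\epsilon\sum_l\alpha^l$, which does not shrink as queries accumulate. The object that actually controls the aggregator's slack is the honest-to-honest dispersion $\sigma^2\sum_l(\alpha^l)^2$; the number of oracle queries $m$ enters only through the Cauchy--Schwarz ratio $(\sum\alpha^l)^2/\sum(\alpha^l)^2\le m$, not through any averaging or concentration. This also answers your second anticipated obstacle: you do not need a uniform $\Theta(\sigma^2)$ lower bound on the dispersion over all weightings---the dispersion and the signal scale together in the weights, and only their ratio matters. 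Your pivot to a two-hypothesis argument is therefore unnecessary, and as written it is too vague to yield the claim (in particular, the ``effective noise $\rho\delta\sigma^2$'' heuristic is not made precise).
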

\begin{proof}
    See Appendix \ref{proof:verrb}.
\end{proof}

Lemma \ref{le:verrb} highlights the impact of a non-ideal robust aggregator $\mathsf A$ on the oracle query complexity. Different from Lemma \ref{le:verrs} where the robust aggregator is ideal such that $\rho=0$, the term $n$ disappears in Lemma \ref{le:verrb}, showing that introducing a robust aggregator to defend against the Byzantine nodes affects the benefit of cooperation. Instead, the parameters \(\rho\) and \(\delta\) that characterize the performance of the robust aggregator appear, suggesting that a class of high-quality robust aggregators are beneficial to the overall complexity.

The absence of $n$ in Lemma \ref{le:verrb} can be explained from the definition of robust aggregators. At the right-hand side of \eqref{eq:agg} in Definition \ref{d:agg}, the term \(\frac{1}{|\mathcal{H}|} \sum_{i \in \mathcal{H}} \|w_i - \bar{w}\|^2\) represents the variation of the honest nodes' gradient estimators. Such a variation cannot be reduced by increasing the number of nodes \(n\), leading to the oracle query complexity in Lemma \ref{le:verrb}.

For \(\zeta^2 = 0\), another lower bound of the oracle query complexity has been established in \citep{alistarh2018byzantine}. However, the definition of the lower bound in \citep{alistarh2018byzantine} is different from ours, but instead in the form of
\[
\inf_{\mathsf{M}, \mathsf{A'} \in \mathcal{M}, \mathcal{A'}} \sup_{f, \mathsf{O} \in \mathcal{F}, \mathcal{O}} \inf \left\{ K \mid \mathbb{E}[\|\nabla f(\tilde{x}^K)\|] \leq \epsilon \right\},
\]
in which \(\mathcal{A'}\) denotes the set of identity-independent robust aggregators. That said, the work of \citep{alistarh2018byzantine} considers the best identity-independent robust aggregator while our work considers the worst robust aggregator satisfying Definition \ref{d:agg}. Note that when \(\zeta^2 = 0\), the Byzantine error reduces to zero and does not appear in the lower bound (see our Lemma \ref{le:nverr}). With the above definition and under the additional assumption of bounded stochastic gradients, the work of \citep{alistarh2018byzantine} establishes a lower bound of \(\Omega\left(\frac{\delta^2 \sigma^2}{\epsilon^2}\right)\) for the oracle query complexity. The differences in the definitions and assumptions result in the difference of the two lower bounds.

Now we turn to consider non-convex $\{f_i(x)\}_{i\in \mathcal H}$. We construct a high-dimensional problem with \(d = \Omega(\epsilon^{-2})\), each node having the same non-convex function $f(x)$. This function, proposed by \citep{carmon2020lower}, exhibits a chain-like structure such that any noiseless oracle query can only discover the index of the next coordinate. Besides, for every \(x \in \mathbb R^d\) with \([x]_d = 0\), \(\|\nabla f(x)\| > \epsilon\). Thus, for deterministic non-convex optimization, to reach the $\epsilon$ accuracy, any method \(\mathsf M \in \mathcal M\) initialized by $x^0 = \mathbf{0}$ has to discover the \(d\)-th coordinate, which requires at least \(d\) oracle queries due to the chain-like structure of \(f(x)\). This yields a lower bound of \( \Omega(\epsilon^{-2}) \) for oracle query complexity.

For non-convex stochastic optimization, a noisy oracle query is designed in \citep{arjevani2023lower} to amplify the lower bound. It discovers the next coordinate with a probability of \(\Theta\left({\epsilon^2}/{\sigma^2}\right) \), meaning that \(\Omega\left(\sigma^2\epsilon^{-2}\right)\) oracle queries are required to discover the next coordinate in expectation. Hence, the total oracle query complexity is \(\Omega\left(\sigma^2\epsilon^{-4}\right)\). Further, for Byzantine-robust non-convex stochastic optimization, we prove that there is a $(\delta_{\rm max},\rho)$-robust aggregator such that $\Omega(\rho\delta\sigma^2\epsilon^{-2})$ oracle queries are required to discover the next coordinate in expectation, leading to the total oracle query complexity of $\Omega(\rho\delta\sigma^2\epsilon^{-4})$ as stated in the following lemma.

\begin{lemma}\label{le:verrb-nc}
    Given $\zeta^2=0$, $\rho>0$, $\sigma^2 > 0$, and \(\delta \in [0,\delta_{\rm max}]\), there exist a distributed problem in the form of \eqref{prob-general} having at least $(1-\delta)n$ honest nodes with function $f \in \mathcal{F}$ and non-convex $\{f_i(x)\}_{i\in \mathcal H}$, a stochastic gradient oracle $\mathsf O \in \mathcal O$, and a robust aggregator ${\mathsf A} \in \mathcal A$, such that for any method $\mathsf M \in \mathcal{M}$, to obtain a $(0,\epsilon)$-stationary point, the expected oracle query complexity is at least
    \[
    K = \Omega \left(\frac{L\Delta\rho\delta \sigma^2}{\epsilon^4}\right),
    \]
    where $\Delta = f(x^0) - \inf_x f(x)$.
\end{lemma}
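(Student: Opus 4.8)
The plan is to combine the chain-like hard instance of \citep{carmon2020lower,arjevani2023lower} with the adversarial robust-aggregator construction used in Lemma \ref{le:verrb}. Concretely, I would let every honest node hold the \emph{same} non-convex function $f$ built from the scaled hard instance $f(x) = \frac{L\lambda^2}{\gamma}\,\bar f_T(x/\lambda)$ of \citep{carmon2020lower}, with $T = \Theta(L\Delta/\epsilon^2)$ so that $\|\nabla f(x)\| > \epsilon$ whenever the last ``active'' coordinate $[x]_T$ is still zero, while $\Delta = f(\mathbf 0) - \inf_x f$. Since $\zeta^2 = 0$ is forced, this is consistent with Assumption \ref{ass:i}, and the initial gradient norm exceeds $\epsilon$, so any $\mathsf M \in \mathcal M$ must ``discover'' all $T$ coordinates before it can output a $(0,\epsilon)$-stationary point. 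The heart of the argument is then purely a counting/probabilistic estimate of how many oracle queries are needed to progress from one coordinate to the next.

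The second ingredient is the noisy oracle. I would take the ``progress-only with small probability'' oracle of \citep{arjevani2023lower}: $\nabla F(x,\xi)$ equals the true gradient except that, on the coordinate that would reveal the next index, it returns a nonzero entry only with probability $p = \Theta(\epsilon^2/(\rho\delta\sigma^2))$ and otherwise returns zero, scaled so that the per-component variance is $\le \sigma^2$ (this is where the factor $\rho\delta$ relative to the Byzantine-free $\Theta(\epsilon^2/\sigma^2)$ of \citep{arjevani2023lower} gets injected). Crucially, even though each honest node queries once per iteration and there are $(1-\delta)n$ of them, I would design the robust aggregator $\mathsf A \in \mathcal A$ so that this free batching is killed: following the construction in the proof of Lemma \ref{le:verrb} (Appendix \ref{proof:verrb}), place the $\delta n$ Byzantine workers' vectors and exploit the slack $\tfrac{\rho\delta}{|\mathcal H|}\sum_{i\in\mathcal H}\|w_i - \bar w\|^2$ in \eqref{eq:agg}: whenever fewer than a $\rho\delta$-fraction of the honest nodes have seen the ``unlock'' event on the current coordinate, $\mathsf A$ can legitimately output a vector whose relevant coordinate is zero, so $x^{t+1}$ remains stuck in the same $\mathrm{span}$ as $x^t$. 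Hence effectively only $\Theta(1/(\rho\delta))$ independent ``chances'' per iteration count toward discovering the next coordinate, and a union/Chernoff bound over iterations gives that $\Omega(1/(\rho\delta p)) = \Omega(\rho\delta\sigma^2/\epsilon^2)$ oracle queries (per node, hence per oracle query in our accounting) are needed in expectation to advance one coordinate.

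Putting the two pieces together: the expected number of oracle queries to discover all $T$ coordinates is $T \cdot \Omega(\rho\delta\sigma^2/\epsilon^2) = \Omega\!\big(\tfrac{L\Delta}{\epsilon^2}\cdot\tfrac{\rho\delta\sigma^2}{\epsilon^2}\big) = \Omega\!\big(\tfrac{L\Delta\rho\delta\sigma^2}{\epsilon^4}\big)$, which is the claimed bound. I expect the main obstacle to be the simultaneous verification of two constraints on the aggregator: it must genuinely satisfy Definition \ref{d:agg} with the prescribed $\rho$ (so the variance-slack bookkeeping has to be tight, not merely order-correct), and it must be \emph{causal/consistent} across iterations so that the stuck iterate $x^{t+1}\in\mathrm{span}(x^0,\dots,x^t,w^0,\dots,w^t)$ genuinely cannot reveal coordinate $T$ — i.e., one has to rule out that the method extracts information about the next index from the \emph{pattern} of zeros rather than from a nonzero coordinate. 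This is exactly the subtlety handled in \citep{arjevani2023lower} via their ``probabilistic zero-chain'' formalism, and the work here is to check that the robust-aggregator layer does not leak any extra information, which I would argue by showing that, conditioned on the event that no honest node has unlocked the next coordinate, the distribution of $w^t$ produced by $\mathsf A$ is independent of that coordinate's identity.
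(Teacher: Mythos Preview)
Your high-level plan---the Carmon et al.\ chain function with $d=\Theta(L\Delta/\epsilon^2)$, the Bernoulli ``unlock'' oracle of Arjevani et al., and an adversarial $(\delta_{\max},\rho)$-aggregator that zeros the currently-undiscovered coordinate whenever Definition~\ref{d:agg} permits---is exactly the paper's construction (which, incidentally, uses \emph{no} Byzantine nodes: all adversarial power comes from the aggregator's slack, so the $\delta n$ Byzantine workers you add are unnecessary). But two concrete points would break the argument as written.

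First, you place the $\rho\delta$ factor in the oracle by taking $p=\Theta(\epsilon^2/(\rho\delta\sigma^2))$. When $\rho\delta<1$ this \emph{increases} the unlock probability and leaves the variance budget unused, so the instance is easier, and indeed your own count $1/(\rho\delta p)$ then evaluates to $\sigma^2/\epsilon^2$, not $\rho\delta\sigma^2/\epsilon^2$. The paper keeps the standard oracle $\tfrac{1}{p}=\tfrac{\sigma^2}{2116\epsilon^2}+1$ (variance saturated at $\sigma^2$) and extracts the $\rho\delta$ factor solely from the aggregator: taking expectations of the escape condition $|[\bar w]_j|^2>\tfrac{\rho\delta}{n}\sum_i\|[w_i]_j-[\bar w]_j\|^2$ on coordinate $j$ gives, after computing both sides, $\frac{(\sum_l\alpha^l\nabla_j f(x^l))^2}{\sum_l(\alpha^l\nabla_j f(x^l))^2}\gtrsim \rho\delta\cdot\tfrac{1-p}{p}$, and then Cauchy--Schwarz bounds the left side by $m_j$, forcing $m_j=\Omega(\rho\delta\sigma^2/\epsilon^2)$. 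Second, your heuristic ``aggregator can output zero whenever fewer than a $\rho\delta$-fraction of honest nodes have unlocked'' only covers the case where each $w_i^t$ is a single stochastic gradient; the method class $\mathcal{M}$ allows the general estimator $w_i^t=\sum_l\alpha^l\nabla F(x^l,\xi_i^l)$ of \eqref{re:p}, and it is precisely this freedom that forces the paper's expectation-plus-Cauchy--Schwarz argument rather than a combinatorial Chernoff count.
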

\begin{proof}
    See Appendix \ref{proof:verrb-nc}.
\end{proof}



\subsection{Final lower bound}\label{ssec:flb}
Now, we sum up the four lower bounds on the Byzantine error and the oracle query complexity established above to yield the final complexity lower bound. The main results are given in Theorem \ref{thm:flb}.
\begin{theorem}\label{thm:flb}
    Given $\zeta^2\ge0$, $\rho\ge0$, $\sigma^2\ge0$, and \(\delta \in [0,\delta_{\rm max}]\), there exist a distributed pro- blem in the form of \eqref{prob-general} having at least $(1-\delta)n$ honest nodes with function $f \in \mathcal{F}$ and $\mu$-strongly convex $\{f_i(x)\}_{i\in \mathcal H}$, a stochastic gradient oracle $\mathsf O \in \mathcal O$ and a robust aggregator ${\mathsf A} \in \mathcal A$, such that for any method $\mathsf M \in \mathcal{M}$, to obtain an $(\epsilon_{\rm bzt},\epsilon)$-stationary point with \(\epsilon_{\rm bzt} = \Omega(\rho^{1/2}\delta^{1/2}\zeta)\), the expected gradient query complexity is at least
    \begin{equation}\label{low-strc}
    K = \Omega \left(\frac{\rho\delta \sigma^2}{\epsilon^2} + \frac{\sigma^2}{(1-\delta)n\epsilon^2} + \sqrt{\kappa}\log\frac{\mu R}{\epsilon}\right).
    \end{equation}
    For non-convex $\{f_i(x)\}_{i\in \mathcal H}$, the expected gradient query complexity is at least
    \begin{equation}\label{low-nc}
    K = \Omega \left(\frac{L\Delta\rho\delta \sigma^2}{\epsilon^4} + \frac{L\Delta\sigma^2}{(1-\delta)n\epsilon^4} + \frac{L\Delta}{\epsilon^2}\right).
    \end{equation}
\end{theorem}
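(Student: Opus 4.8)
The plan is to prove \eqref{low-strc} and \eqref{low-nc} by a \emph{direct-sum} construction that superimposes, on four mutually orthogonal groups of coordinates, the hard instances already built in Sections~\ref{ssec:nverr}--\ref{ssec:verrb}, and then to argue that any method in $\mathcal M$ run on the product decouples into independent runs on the factors. Concretely, I would write $x=(x_{\rm det},x_{\rm co},x_{\rm ag},x_{\rm bzt})$ and set $f_i(x)=f_i^{\rm det}(x_{\rm det})+f_i^{\rm co}(x_{\rm co})+f_i^{\rm ag}(x_{\rm ag})+f_i^{\rm bzt}(x_{\rm bzt})$, where the four summands carry, respectively, the deterministic strongly-convex (resp.\ non-convex) instance of Lemma~\ref{le:lbsc} (resp.\ Lemma~\ref{le:lbc}), the homogeneous stochastic instance of Lemma~\ref{le:verrs} (resp.\ Lemma~\ref{le:verrs-nc}), the aggregator instance of Lemma~\ref{le:verrb} (resp.\ Lemma~\ref{le:verrb-nc}), and the heterogeneous instance of Lemma~\ref{le:nverr}. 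Each block receives a constant fraction of the global budgets $\sigma^2,\zeta^2,\Delta,R^2$, while $L$ and $\mu$ (hence $\kappa$) are shared, so that $f\in\mathcal F$ and is $\mu$-strongly convex (resp.\ non-convex). The composite oracle returns, block by block, the oracle of the corresponding lemma (noiseless on the deterministic block, noisy with the allotted variance on the others); with independent per-block noise this composite oracle lies in $\mathcal O$. The composite aggregator also acts block-wise: exact averaging on the deterministic and cooperation blocks, the adversarial aggregators of Lemmas~\ref{le:verrb}/\ref{le:nverr} on the other two. Since \eqref{eq:agg} holds on each block with the shared $\rho$, summing the block inequalities shows the composite aggregator is $(\delta_{\rm max},\rho)$-robust, i.e.\ it belongs to $\mathcal A$.

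Next I would carry out the decoupling. Because $f$, $\mathsf O$, $\mathsf A$ all act block-wise, and because the update rule defining $\mathcal M$ only takes linear spans of past iterates and of linear combinations \eqref{re:p} of past stochastic gradients, the coordinates of any run of $\mathsf M\in\mathcal M$ restricted to a single block evolve exactly as a run of some method in $\mathcal M$ on that block's standalone instance (the constraints $x^{t+1}\in\mathrm{span}(\cdots)$, $w_i^t$ linear, and $\sum_{j,l}\alpha^{(j,l)}\ge\alpha_{\rm min}$ all restrict coordinatewise). Applying Lemma~\ref{le:nverr} to the heterogeneity block then gives $\inf_t\E\|\nabla f^{\rm bzt}(\hat x^t_{\rm bzt})\|=\Omega(\rho^{1/2}\delta^{1/2}\zeta)$, hence $\epsilon_{\rm bzt}^{\mathsf M}(f,\mathsf O,\mathsf A)\ge\epsilon_{\rm bzt}=\Omega(\rho^{1/2}\delta^{1/2}\zeta)$. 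For the query complexity, use $\|\nabla f(x)\|^2=\|\nabla f^{\rm det}\|^2+\|\nabla f^{\rm co}\|^2+\|\nabla f^{\rm ag}\|^2+\|\nabla f^{\rm bzt}\|^2$: any iterate with $\E\|\nabla f(\tilde x^K)\|\le\epsilon_{\rm bzt}^{\mathsf M}+\epsilon$ must, up to absolute constants, be an $O(\epsilon)$-stationary point for each of the first three blocks, so Lemmas~\ref{le:lbsc}, \ref{le:verrs}, \ref{le:verrb} (resp.\ their non-convex analogues) force $K\ge\Omega(\sqrt\kappa\log(\mu R/\epsilon))$, $K\ge\Omega(\sigma^2/((1-\delta)n\epsilon^2))$ and $K\ge\Omega(\rho\delta\sigma^2/\epsilon^2)$ simultaneously. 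Since a single oracle query advances all blocks at once, $K$ is at least the \emph{maximum} of these three, which is within a factor $3$ of their sum; this is precisely \eqref{low-strc}, and the same argument with the non-convex lemmas yields \eqref{low-nc}. (If one only wants the minimax lower bound $\mathcal K_\epsilon$, the $\sup$ over $f,\mathsf O,\mathsf A$ already lets one take the three hard instances separately, so the product is needed only for the single-instance phrasing.)

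The smoothness/convexity/variance bookkeeping of the product and the verification that a block-wise aggregator stays $(\delta_{\rm max},\rho)$-robust are routine. I expect the main obstacle to be the interaction between the non-vanishing heterogeneity block and the vanishing blocks, via the target accuracy $\epsilon_{\rm bzt}^{\mathsf M}+\epsilon$: one must build the heterogeneity instance so that the method is genuinely \emph{frozen} there, with $\|\nabla f^{\rm bzt}\|$ pinned to a fixed value $\Theta(\rho^{1/2}\delta^{1/2}\zeta)$ regardless of the queries, and then argue carefully that the slack $\epsilon$ in the target is necessarily ``spent'' on the other three blocks rather than on the frozen one — equivalently, that the per-block accuracy requirement remains $\Theta(\epsilon)$ and does not degrade to something coarser like $\sqrt{\epsilon_{\rm bzt}\epsilon}$. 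Making the Step-2 decoupling fully rigorous (that the restricted process is a bona fide element of $\mathcal M$, respecting the span conditions and $\alpha_{\rm min}$) is the other point that demands care, though it is conceptually straightforward given the block-separable structure.
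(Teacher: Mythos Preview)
The paper's own ``proof'' of Theorem~\ref{thm:flb} is the single sentence preceding the statement: it simply says to sum up the four lower bounds from Lemmas~\ref{le:nverr}--\ref{le:verrb-nc}. In other words, the paper does \emph{not} build a product instance; it relies on the minimax definition~\eqref{vanishing} and picks, for each term, the separate hard instance furnished by the corresponding lemma (all of which have $\zeta^2=0$ and hence $\epsilon_{\rm bzt}=0$), while Lemma~\ref{le:nverr} is invoked separately for the Byzantine error. This is exactly the approach you yourself sketch in your parenthetical remark.

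Your direct-sum construction is a genuinely different and more ambitious route, aimed at the literal single-instance reading of the theorem. However, the obstacle you flag at the end is real and unresolved: with the Byzantine block's gradient pinned at $c=\Theta(\rho^{1/2}\delta^{1/2}\zeta)$, the target $\E\|\nabla f(\tilde x^K)\|\le c+\epsilon$ on the product only forces the remaining blocks to satisfy $\E[\,\|\nabla f^{\rm det}\|^2+\|\nabla f^{\rm co}\|^2+\|\nabla f^{\rm ag}\|^2\,]\le 2c\epsilon+\epsilon^2$, i.e.\ per-block accuracy $O(\sqrt{c\epsilon})$ rather than $O(\epsilon)$ when $c\gg\epsilon$. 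This degradation would weaken each query-complexity term (e.g.\ $\rho\delta\sigma^2/\epsilon^2$ becomes $\rho\delta\sigma^2/(c\epsilon)$), so the product argument as written does not deliver \eqref{low-strc}--\eqref{low-nc}. I do not see a clean fix within the block-orthogonal framework; one would need the Byzantine block's contribution to $\|\nabla f\|$ to be additive rather than Pythagorean, which orthogonal coordinates cannot give.

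So: drop the product and just do what the paper does --- invoke each lemma on its own instance and take the maximum (hence $\Omega$ of the sum) via the $\sup$ in~\eqref{vanishing}. Your decoupling and block-robustness checks are then unnecessary.
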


\section{Upper bound of Byzantine-robust distributed stochastic optimization}\label{sec:ub}
In this section, we will verify the tightness of the lower bound established in Section \ref{sec:lb}, via designing methods $\mathsf M \in \mathcal{M}$ with any robust aggregator ${\mathsf A} \in \mathcal A$ and any stochastic gradient oracle $\mathsf O \in \mathcal O$ to solve \eqref{prob-general} with any function $f \in \mathcal{F}$ and to reach the lower bound. This is a nontrivial task and calls for elaborate integration of several fundamental tools. First, for strongly convex functions the traditional stochastic gradient descent method is not optimal, while for non-convex functions, to reach the corresponding lower bound we need to solve a series of strongly convex subproblems as the inner loop (see Algorithm \ref{algo-nc} for reference). These facts necessitate the use of \textbf{Nesterov's acceleration} to attain the lower bounds for both strongly convex and non-convex functions. Second, the Byzantine nodes can utilize the stochastic gradient noise of the honest nodes to cover their attacks and maximize the error of the robust aggregator. Consequently, \textbf{variance reduction} within each honest node is essential for the performance improvement \citep{wu2020federated,karimireddy2021learning,gorbunov2022variance}.

Below, we propose a Byzantine-robust distributed stochastic Nesterov's accelerated me- thod with variance reduction (Byrd-Nester) to serve as the cornerstone of the subsequent optimal method design. Byrd-Nester applies \textbf{Nesterov's acceleration} in a distributed and stochastic manner, and utilizes the mini-batch technique for \textbf{variance reduction}; see Algorithm \ref{subalgo}. We show in the Appendix \ref{appendix:Equivalentform} that an equivalent variant of Algorithm \ref{subalgo} belongs to the method class \(\mathcal M\).

\begin{algorithm}[ht]
    \caption{{\bf By}zantine-{\bf r}obust {\bf d}istributed stochastic {\bf Neste}rov's accelerated method with variance {\bf r}eduction (Byrd-Nester)}
    \label{subalgo}
    \begin{algorithmic}
    \STATE{Input: starting point $x^0$, auxiliary point $y^0=x^0$, maximum number of iterations $T$, batch size $m_0$, $m$, step size $\eta$, $\theta\in(0,1]$}, {$\beta\in[0,1)$}, {$\alpha\in[0,1]$, $\hat s^0 = s_i^0 = \frac{1}{m_0}\sum_{l = 1}^{m_0} \nabla F(y^{0},\xi^{(0,l)}_i)$.}
    \FOR{$t=1,\cdots,T$}
    \FOR{node $i \in \mathcal {H}$}
    \STATE{Independently sample $\{\xi^{(t-1,1)}_i,\cdots,\xi^{(t-1,m)}_i \}$, obtain stochastic gradients from oracle $\mathsf O \in \mathcal{O}$ and calculate
        \begin{align}
             g_i^{t-1} &=  \frac{1}{m}\sum_{l = 1}^m \nabla F(y^{t-1};\xi^{(t-1,l)}_i), \label{mini-batch}\\
             s^t_i &= \beta s^{t-1}_i + \theta g_i^{t-1} .\label{update:s}
        \end{align} \vspace{-1em} }
    \STATE{Send $g_i^{t-1}$ and $s_i^t$ to server.}
    \ENDFOR
    \FOR{node $i \in \mathcal {B}$}
    \STATE{Send arbitrary vector $g_i^{t-1} \in \mathbb R^d$ and $s_i^t \in \mathbb R^d$ to server.}
    \ENDFOR
    \STATE{Server receives $\{ g_i^{t-1}\}_{i=1}^{n}$ and $\{s^t_i\}_{i=1}^{n}$, and updates
    \begin{align}
            s^t &= \beta \hat s^{t-1} + \theta{\mathsf A} (\{ g_i^{t-1}\}_{i=1}^{n}), \label{hat-s-t}\\
            \hat s^t &= (1 - \alpha) s^t + \alpha {\mathsf A} (\{s^t_i\}_{i=1}^{n}), \label{ts+hs}\\
            x^t &= x^{t-1} - \eta \hat s^t, \label{update:x}\\
            y^t &= x^t + \beta (x^t - x^{t-1}).\label{update:y}
    \end{align} \vspace{-1em} }
    \STATE{Server sends $y^t$ to all nodes.}
    \ENDFOR
    \RETURN $\tilde x^K = x^T$ for strongly convex optimization; $\tilde x^K = y^{t'}$ where $t'$ is randomly chosen from \(0,\cdots,T-1\) for non-convex optimization. Here $K$ is the number of oracle queries.
    \end{algorithmic}
\end{algorithm}

At the \(t\)-th iteration, each honest node $i \in \mathcal{H}$ queries a mini-batch of $m$ stochastic gradients at the auxiliary point \(y^{t-1}\), and averages them to calculate the mini-batch stochastic gradient \(g_i^{t-1}\) as in \eqref{mini-batch}. The purpose of this step is to reduce the variance of the stochastic gradient noise. Then, each honest node $i\in \mathcal{H}$ calculates \(s_i^t\), a weighted combination of the historical and current mini-batch stochastic gradients, for the sake of node-level acceleration as in \eqref{update:s}. It is worth noting that \(\beta + \theta\) may exceed 1, representing the aggressive usage of the mini-batch stochastic gradients. After that, each honest node $i \in \mathcal{H}$ sends \(g_i^{t-1}\) and \(s_i^t\) to the server. In contrast, each Byzantine node \(i \in \mathcal B\) may send two arbitrary \(d\)-dimensional vectors \(g_i^{t-1}\) and \(s_i^t\) to the server.

Then, the server aggregates the received $\{ g_i^{t-1}\}_{i=1}^{n}$ and $\{s^t_i\}_{i=1}^{n}$ via a robust aggregator \(\mathsf A \in \mathcal A\). Therein, \eqref{hat-s-t} uses \({\mathsf A} (\{ g_i^{t-1}\}_{i=1}^{n})\) to calculate \(s^t\) for server-level acceleration. Note that the update involves \(\hat{s}^{t-1}\) instead of \(s^{t-1}\). Meanwhile, as shown in \eqref{ts+hs}, \(\hat s^{t}\) is a linear combination of $s^t$ and \({\mathsf A} (\{s^t_i\}_{i=1}^{n})\), parameterized by $\alpha \in [0,1]$ to adjust the balance between node-level and server-level accelerations. In particular, $\alpha=0$ voids node-level acceleration, while $\alpha=1$ renders server-level acceleration ineffective. Such a design offers more flexibility to the proposed method. Using \(\hat s^{t}\), \eqref{update:x} runs a descent step to update $x^t$, while \eqref{update:y} runs an extrapolation step to $y^t$, differentiating the adopted Nesterov's acceleration from the mom- entum acceleration. Finally, the server sends \(y^t\) to all nodes.


Note that the node-level momentum acceleration has also been utilized in stochastic optimization and its Byzantine-robust distributed variant \citep{liu2020improved,karimireddy2021learning} for variance reduction, in the form of \(s_i^t = \beta s_i^{t-1} + (1-\beta) g_i^{t-1}\) that is similar to \eqref{update:s}. However, its variance reduction effect relies on setting \(\beta\) sufficiently close to \(1\) and \(1-\beta\) to \(0\). Our choices of \(\beta\) and \(\theta\) do not satisfy these requirements; see the lemmas, theorems and corollaries in the following subsections. This fact explains why we still need the mini-batch technique for variance reduction, on top of the node-level Nesterov's acceleration.


To better understand the behavior of Algorithm \ref{subalgo} and facilitate the subsequent analysis, we provide a deeper examination of \eqref{update:x}. Observe that \eqref{update:x} is equivalent to
\begin{align}
\label{detailupdate}
    x^t =& x^{t-1} - \eta \hat s^t = x^{t-1} - \eta \bar s^t + \eta \bar s^t - \eta \hat s^t \\
    =& x^{t-1} - \eta \beta \bar s^{t-1} - \frac{\eta\theta}{|\mathcal H|m}\sum_{i \in \mathcal H}\sum_{l=1}^m\nabla F (y^{t-1};\xi^{(t-1,l)}_i)  + \eta \bar s^t - \eta \hat s^t \notag \\
    =&x^{t-1} - \eta \beta \hat s^{t-1} - \eta \theta\nabla f(y^{t-1}) + \eta\theta\nabla f(y^{t-1}) - \frac{\eta\theta}{|\mathcal H|m}\sum_{i \in \mathcal H}\sum_{l=1}^m\nabla F (y^{t-1};\xi^{(t-1,l)}_i) \notag \\
    & + \eta \bar s^t - \eta \hat s^t - \eta\beta (\bar s^{t-1} - \hat s^{t-1}) \notag \\
    =& \underbrace{y^{t-1} - \eta\theta\nabla f(y^{t-1})}_{\text{Nesterov's acceleration}}+ \underbrace{ \eta \bar s^t - \eta \hat s^t -\eta\beta (\bar s^{t-1} - \hat s^{t-1})}_{\text{$\Delta_1^t$: aggregation bias}} \notag \\
    &+ \underbrace{\eta\theta\nabla f(y^{t-1}) - \frac{\eta\theta}{|\mathcal H|m}\sum_{i \in \mathcal H}\sum_{l=1}^m\nabla F(y^{t-1};\xi^{(t-1,l)}_i) }_{\text{$\Delta_2^t$: stochasticity bias}}, \notag
\end{align}
where $\bar s^t= \frac{1}{|\mathcal H|}\sum_{i \in \mathcal H} s_i^t$. According to \eqref{detailupdate}, the update of $x^t$ consists of three parts: Neste- rov's acceleration, aggregation bias $\Delta_1^t$ and stochasticity bias $\Delta_2^t$. If the robust aggregator $\mathsf A$ is ideal such that $\rho=0$, Algorithm \ref{subalgo} reduces to the distributed stochastic Nesterov's accelerated method. If further the stochastic gradient variance $\sigma^2=0$, it turns to the distributed deterministic Nesterov's accelerated method.

\subsection{Strongly convex optimization}


For strongly convex optimization, we first analyze the oracle query complexity of Algorithm \ref{subalgo} and show that it has an $O(\log \epsilon^{-1})$ gap to the $\Omega(\epsilon^{-2})$ lower bound in Theorem \ref{thm:flb}. We set \(\beta\) in Algorithm \ref{subalgo} as
\begin{align}\label{beta}
\beta = \frac{\sqrt{q}-1}{\sqrt{q}+1},
\end{align}
where \(q \geq 1\) is a constant. We will set $q = \frac{L}{\mu\theta}= \frac{\kappa}{\theta}$ in the ensuing analysis. The following lemma provides an effective tool to establish the convergence of Algorithm \ref{subalgo}.



\begin{lemma} \label{prop:gc}
Given $\zeta^2\ge0$, $\rho\ge0$, $\sigma^2\ge0$, $L > 0$ and \(\delta \in [0,\delta_{\rm max}]\), for any distributed problem in the form of \eqref{prob-general} having at least $(1-\delta)n$ honest nodes with any function $f \in \mathcal{F}$ and $\mu$-strongly convex $\{f_i(x)\}_{i\in \mathcal H}$, any stochastic gradient oracle $\mathsf O \in \mathcal O$ and any \((\delta_{\rm max},\rho)\)-robust aggregator \(\mathsf A \in \mathcal A\), if there exist an $\frac{L}{\theta}$-strongly convex function $h^t(x)$, and parameters $v^t$ and $\varepsilon^t$ at each iteration $t$ such that
    \begin{enumerate}[label=(\roman*)]
        \item $\E[h^t(x)] \leq f(x) + \frac{L-\mu\theta}{2\theta}\|x-y^{t-1}\|^2$ for $x=q^{-1/2}x^*+(1-q^{-1/2})x^{t-1}$, \label{ta1}
        \item $\E[f(x^t)] \leq \E[h^t(x^{t*})] + \upsilon^t$, \label{ta2}
        \item $\E[h^t(x^t)] \leq \E[h^t(x^{t*})] + \varepsilon^t$, \label{ta3}
    \end{enumerate}
then with $\beta = \frac{\sqrt{q}-1}{\sqrt{q}+1}$ and \(q = \frac{L}{\mu\theta}= \frac{\kappa}{\theta}\), the iterate \(x^t\) generated by Algorithm \ref{subalgo} satisfies
\begin{align}\label{convergence-guarantee}
\E[f(x^t) - f^*] \leq \left(1 - \frac{1}{2\sqrt{q}}\right)^t\left(2(f(x^0)-f^*)+4\sum_{\tau=1}^t\left(1 - \frac{1}{2\sqrt{q}}\right)^{-\tau}(\upsilon^\tau+\sqrt{q}\varepsilon^\tau)\right).
\end{align}
Therein, $x^* =\arg\min_x f(x)$, $x^{t*} = \arg\min_{x} h^t(x)$ and \(f^* = \inf_x f(x)\).
\end{lemma}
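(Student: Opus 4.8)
The plan is to establish the recursion \eqref{convergence-guarantee} by constructing an explicit estimating sequence (à la Nesterov) adapted to the stochastic and Byzantine setting, and tracking how the biases $\upsilon^t$ and $\varepsilon^t$ propagate through it. First I would set up the standard machinery: define $\phi^t(x) := h^t(x)$ as the surrogate model at iteration $t$, and exploit its $\frac{L}{\theta}$-strong convexity together with the minimizer $x^{t*} = \arg\min_x h^t(x)$ to write the quadratic lower bound $h^t(x) \ge h^t(x^{t*}) + \frac{L}{2\theta}\|x - x^{t*}\|^2$. The goal is to show that, with the Nesterov update structure baked into Algorithm~\ref{subalgo} (the choices $x^t = x^{t-1} - \eta\hat s^t$ and $y^t = x^t + \beta(x^t - x^{t-1})$, together with $\beta = \frac{\sqrt q - 1}{\sqrt q + 1}$), the pair $(x^t, y^{t-1})$ implements a step of accelerated gradient descent on $f$ up to the aggregation and stochasticity biases $\Delta_1^t, \Delta_2^t$ identified in \eqref{detailupdate}.

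Second, I would combine hypotheses \ref{ta1}, \ref{ta2}, \ref{ta3} to close a one-step contraction. Condition \ref{ta1} controls the surrogate at the Nesterov-weighted point $x = q^{-1/2}x^* + (1 - q^{-1/2})x^{t-1}$ in terms of $f(x)$ plus a quadratic penalty centered at $y^{t-1}$; using $\mu$-strong convexity of $f$ to bound $f(x)$ from above at this interpolation point, and then \ref{ta2} to transfer from $h^t(x^{t*})$ back to $f(x^t)$ at the cost of $\upsilon^t$, one obtains an inequality of the form $\E[f(x^t) - f^*] \le (1 - q^{-1/2})\E[f(x^{t-1}) - f^*] + (\text{penalty term}) + \upsilon^t + (\text{contribution of }\varepsilon^t)$. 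Condition \ref{ta3} is what lets us dispose of the quadratic penalty $\frac{L - \mu\theta}{2\theta}\|x - y^{t-1}\|^2$: the distance $\|x^t - x^{t*}\|^2$ is controlled by $\frac{2\theta}{L}\varepsilon^t$, and the Nesterov extrapolation identity relates $\|x - y^{t-1}\|$ to the successive iterate gaps, so the penalty folds into a $\sqrt q\,\varepsilon^t$ term. Iterating the resulting recursion $a^t \le (1 - \tfrac{1}{2\sqrt q}) a^{t-1} + 4(1-\tfrac{1}{2\sqrt q})^{?}(\upsilon^t + \sqrt q\,\varepsilon^t)$ and unrolling from $t$ down to $0$ produces exactly \eqref{convergence-guarantee}; the factor $2$ on $f(x^0) - f^*$ and the precise contraction rate $1 - \tfrac{1}{2\sqrt q}$ (rather than $1 - q^{-1/2}$) come from absorbing lower-order quadratic terms, a routine but careful bookkeeping step.

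The main obstacle I anticipate is the \emph{algebraic alignment between the estimating-sequence point $x = q^{-1/2}x^* + (1 - q^{-1/2})x^{t-1}$ appearing in \ref{ta1} and the actual iterates $x^t, y^t$ produced by the coupled updates \eqref{update:x}--\eqref{update:y}}. In the deterministic Nesterov analysis this alignment is exact by construction of the extrapolation coefficient; here one must verify that the same coefficient $\beta = \frac{\sqrt q - 1}{\sqrt q + 1}$ with $q = \kappa/\theta$ still makes the weights match once the step size $\eta$ and the momentum parameter $\theta$ are chosen consistently, and that the perturbations $\Delta_1^t$ (aggregation bias) and $\Delta_2^t$ (stochasticity bias) enter only additively through $\upsilon^t$ and $\varepsilon^t$ rather than corrupting the contraction factor. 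A secondary difficulty is keeping the expectations properly conditioned: since $h^t$, $\upsilon^t$, $\varepsilon^t$ are themselves random (they depend on the sampled stochastic gradients and the adversarial inputs up to iteration $t$), all three hypotheses are stated in expectation, and I would need the tower property and the independence of $\{\xi_i^{(t,l)}\}$ across iterations to chain the per-step bounds without picking up cross terms. Once these two points are handled, the unrolling of the linear recursion is mechanical.
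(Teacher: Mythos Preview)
Your high-level plan is on track, but there is a genuine structural gap in how you propose to close the one-step recursion. You correctly chain \ref{ta2} with the strong convexity of $h^t$ and then \ref{ta1} to arrive at
\[
\E[f(x^t)] \le \E[f(z^{t-1})] + \tfrac{L-\mu\theta}{2\theta}\E\|z^{t-1}-y^{t-1}\|^2 - \tfrac{L}{2\theta}\E\|z^{t-1}-x^{t*}\|^2 + \upsilon^t,
\]
where $z^{t-1}=q^{-1/2}x^*+(1-q^{-1/2})x^{t-1}$. But you then misattribute the role of \ref{ta3}: it is \emph{not} what disposes of the positive penalty $\tfrac{L-\mu\theta}{2\theta}\|z^{t-1}-y^{t-1}\|^2$. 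That penalty involves $z^{t-1}$ and $y^{t-1}$ and has nothing to do with $\|x^t-x^{t*}\|^2$. In the paper, the negative term $-\tfrac{L}{2\theta}\|z^{t-1}-x^{t*}\|^2$ is expanded around $x^t$ as $-\tfrac{L}{2\theta}\|z^{t-1}-x^t\|^2 + \Pi^t$ with $\Pi^t=-\tfrac{L}{2\theta}\|x^t-x^{t*}\|^2-\tfrac{L}{\theta}\langle z^{t-1}-x^t,x^t-x^{t*}\rangle$, and \ref{ta3} enters only to bound this cross term $\Pi^t$ via Young's inequality and $\tfrac{L}{2\theta}\|x^t-x^{t*}\|^2\le h^t(x^t)-h^{t*}\le\varepsilon^t$, producing the $\sqrt{q}\,\varepsilon^t$ contribution.

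The deeper missing ingredient is that a one-term recursion on $\E[f(x^t)-f^*]$ alone will \emph{not} close. The positive penalty $\tfrac{L-\mu\theta}{2\theta}\|z^{t-1}-y^{t-1}\|^2$ and the negative term $-\tfrac{L}{2\theta}\|z^{t-1}-x^t\|^2$ live on different geometric objects and do not cancel; the paper introduces the auxiliary sequence $\bar x^t = x^t+(\sqrt{q}-1)(x^t-x^{t-1})$, shows the identities $\|z^{t-1}-x^t\|^2=q^{-1}\|x^*-\bar x^t\|^2$ and $\|z^{t-1}-y^{t-1}\|^2\le q^{-1/2}\big(\tfrac{\|x^*-x^{t-1}\|^2}{q+\sqrt q}+\tfrac{\|x^*-\bar x^{t-1}\|^2}{\sqrt q+1}\big)$, and then tracks the two-term Lyapunov $S^t=(1-q^{-1/2})\E[f(x^t)-f^*]+\tfrac{L-\mu\theta}{2\theta\sqrt q(\sqrt q+1)}\E\|x^*-\bar x^t\|^2$. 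The contraction at rate $\tfrac{2\sqrt q-2}{2\sqrt q-1}\le 1-\tfrac{1}{2\sqrt q}$ is on $S^t$, not on the function gap alone, and this is also why a residual $\tfrac{S^t}{2(\sqrt q-1)}$ from the $\Pi^t$ bound can be absorbed. Without $\bar x^t$ and $S^t$, the ``algebraic alignment'' you flag as the main obstacle cannot be resolved; your proposal correctly senses the difficulty but does not yet contain the mechanism that handles it.
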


\begin{proof}
    See Appendix \ref{proof:proposition}.
\end{proof}

The result of Lemma \ref{prop:gc} relies on the existence of a proper surrogate function $h^t(x)$ and we will discuss later. In \eqref{convergence-guarantee}, the term of $1-1/(2\sqrt{q})$ implies the accelerated convergence. Nevertheless, the convergence is negatively affected by the residual $\upsilon_\tau+\sqrt{q}\varepsilon_\tau$, in which \(\upsilon^t\) measures the appropriateness of \(h^t(x)\) as a surrogate function (see Lemma 2.2.1 in \citep{nesterov2003introductory}), while \(\varepsilon^t\) quantifies the gap between \(x^t\) and the minimizer of $h^t(x)$.

Now, we design a set of \(\{h^t(x), \upsilon^t, \varepsilon^t\}_{t=1}^T\) that satisfy the three conditions in Lemma \ref{prop:gc}.
First, the surrogate function $h^t(x)$ is given by
\begin{equation}\label{h-t}
h^t(x) := f(y^{t-1}) + \langle \nabla f(y^{t-1}), x - y^{t-1} \rangle + \frac{L}{2\theta}\|x - y^{t-1}\|^2,
\end{equation}
where $\theta$ has been introduced in \eqref{update:s} and \eqref{hat-s-t}. For such a surrogate function, $x^{t*} = y^{t-1} - \frac{\theta}{L} \nabla f(y^{t-1})$. Second, we set \(\upsilon^t = \varepsilon^t  = \frac{L}{\theta}\E[\|\Delta_1^t\|^2+\|\Delta_2^t\|^2]\). Below, we verify the three con- ditions in Lemma \ref{prop:gc} one by one.
\begin{enumerate}[label=(\roman*)]
    \item It obviously holds from the strong convexity of $f$.
    \item Consider
        \[
        f(x^t) \leq h^t(x^t) \leq h^t(x^{t*}) + \frac{L}{2\theta} \|x^t - x^{t*}\|^2,
        \]
        where the first inequality follows from the $L$-smoothness of $f$ and the second inequality follows from the $\frac{L}{\theta}$-smoothness of $h^t$. Taking expectations and letting the step size $\eta = \frac{1}{L}$, we have
        \begin{align*}
            \E[f(x^t)] \leq &  \E[h^t(x^{t*})] + \frac{L}{2\theta} \E[\|x^t - x^{t*}\|^2]
            = \E[h^t(x^{t*})] + \frac{L}{2\theta} \E[\|\Delta_1^t+\Delta_2^t\|^2] \\
            \leq & \E[h^t(x^{t*})] + \upsilon^t.
        \end{align*}
        where the equality is due to \eqref{detailupdate} and the fact of $x^{t*} = y^{t-1} - \frac{\theta}{L} \nabla f(y^{t-1})$.
    \item Again, consider
        \[
        h^t(x^t) \leq h^t(x^{t*}) + \frac{L}{2\theta} \|x^t - x^{t*}\|^2,
        \]
        that we have derived from the $\frac{L}{\theta}$-smoothness of $h^t$. Following the similar derivation as in (ii), we have
        \begin{align*}
            \E[h^t(x^t)] \leq &  \E[h^t(x^{t*})] + \frac{L}{2\theta} \E[\|x^t - x^{t*}\|^2]
            = \E[h^t(x^{t*})] + \frac{L}{2\theta} \E[\|\Delta_1^t+\Delta_2^t\|^2] \\
            \leq & \E[h^t(x^{t*})] + \varepsilon^t.
        \end{align*}
\end{enumerate}

To establish the convergence of Algorithm \ref{subalgo}, it remains to bound $\upsilon^t$ and $\varepsilon^t$; that is, to bound $\frac{L}{\theta}\E[\|\Delta_1^t\|^2+\|\Delta_2^t\|^2]$. In Appendices \ref{proof:v} and \ref{proof:Delta2}, we respectively bound the agg- regation bias \(\E[\|\Delta_1^t\|^2]\) and the stochasticity bias \(\E[\|\Delta_2^t\|^2]\). With them, we have
\begin{align}
\label{eq:uvbound}
\upsilon^t = \varepsilon^t & = \frac{L}{\theta} \E[\|\Delta_1^t\|^2+\|\Delta_2^t\|^2] \\
& \leq \frac{1}{L\theta}\left(\frac{3\chi_4\rho\delta\sigma^2}{m}\left(1+\frac{1}{(1-\delta)n}\right)+\frac{\theta^2\sigma^2}{(1-\delta)nm}+3\chi_5\rho\delta\zeta^2\right), \notag
\end{align}
where \(\chi_4, \chi_5 \ge 0\) are some constants.
It is worth noting that the first two terms at the right-hand side of \eqref{eq:uvbound} are controlled by the batch size $m$, implying that we can reduce the effect of the variance $\sigma^2$ by increasing $m$. The last term is a constant error, causing the Byzantine error in Section \ref{ssec:nverr}.

Hence, we establish the convergence result of Algorithm \ref{subalgo} in terms of both the function value and the gradient norm as follows.

\begin{theorem}\label{thm:sc}
    Given $\zeta^2\ge0$, $\rho\ge0$, $\sigma^2\ge0$, $L > 0$ and \(\delta \in [0,\delta_{\rm max}]\), for any distributed problem in the form of \eqref{prob-general} having at least $(1-\delta)n$ honest nodes with any function $f \in \mathcal{F}$ and $\mu$-strongly convex $\{f_i(x)\}_{i\in \mathcal H}$, any stochastic gradient oracle $\mathsf O \in \mathcal O$ and any \((\delta_{\rm max},\rho)\)-robust aggregator \(\mathsf A \in \mathcal A\), consider Algorithm \ref{subalgo} with the step size $\eta=\frac{1}{L}$. With $\beta = \frac{\sqrt{q}-1}{\sqrt{q}+1}$ and \(q = \frac{L}{\mu\theta}= \frac{\kappa}{\theta}\), if the parameters $\alpha$ and $\theta$ meet the requirements in Lemma \ref{le:v}, then under Assumptions \ref{ass:basic}--\ref{ass:u}, the iterate \(x^t\) generated by Algorithm \ref{subalgo} satisfies
    \begin{align}\label{f-f*}
    \E[f(x^t) - f^*]
    \leq 2\left(1 - \frac{1}{2\sqrt{q}}\right)^t\Delta+\frac{16}{\mu\theta^2}\left(\frac{6\chi_4\rho\delta\sigma^2}{m}+\frac{\theta^2\sigma^2}{(1-\delta)nm}+3\chi_5\rho\delta\zeta^2\right),
    \end{align}
    \begin{align}\label{gradnorm}
    \E[\|\nabla f(x^t)\|^2] \leq 2\left(1 - \frac{1}{2\sqrt{q}}\right)^t L^2R^2+\frac{32\kappa}{\theta^2}\left(\frac{6\chi_4\rho\delta\sigma^2}{m}+\frac{\theta^2\sigma^2}{(1-\delta)nm}+3\chi_5\rho\delta\zeta^2\right),
    \end{align}
    where \(\kappa = \frac{L}{\mu}\), $\Delta = f(x^0)-f^*$ and $R = \|x^0-x^*\|$.
\end{theorem}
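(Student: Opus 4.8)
The plan is to assemble Theorem~\ref{thm:sc} from three ingredients that are already in place: the accelerated recursion of Lemma~\ref{prop:gc}, the explicit surrogate $h^t$ in \eqref{h-t} together with the choice $\upsilon^t=\varepsilon^t=\frac{L}{\theta}\E[\|\Delta_1^t\|^2+\|\Delta_2^t\|^2]$ (just verified to meet conditions \ref{ta1}--\ref{ta3} when $\eta=1/L$), and the uniform bias bound \eqref{eq:uvbound}, which holds precisely because $\alpha$ and $\theta$ are assumed to satisfy the requirements of Lemma~\ref{le:v}. First I would invoke Lemma~\ref{prop:gc} with these choices to obtain \eqref{convergence-guarantee}, i.e.
\[
\E[f(x^t)-f^*]\le\left(1-\frac{1}{2\sqrt{q}}\right)^{t}\!\left(2\Delta+4\sum_{\tau=1}^{t}\left(1-\frac{1}{2\sqrt{q}}\right)^{-\tau}\!\big(\upsilon^\tau+\sqrt{q}\,\varepsilon^\tau\big)\right),
\]
with $q=\kappa/\theta=L/(\mu\theta)\ge 1$ (since $\kappa\ge1$ and $\theta\le1$).

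Next I would collapse the residual sum. Let $B$ denote the right-hand side of \eqref{eq:uvbound}, so that $\upsilon^\tau=\varepsilon^\tau\le B$ for every $\tau$, hence $\upsilon^\tau+\sqrt{q}\,\varepsilon^\tau\le(1+\sqrt{q})B\le 2\sqrt{q}\,B$ by $q\ge1$. Writing $r:=1-\frac{1}{2\sqrt{q}}\in(0,1)$, the finite geometric sum gives $\sum_{\tau=1}^{t}r^{-\tau}=\frac{r^{-t}-1}{1-r}\le 2\sqrt{q}\,r^{-t}$, so the residual term is at most $4\cdot 2\sqrt{q}\,B\cdot 2\sqrt{q}\,r^{-t}=16q\,B\,r^{-t}$; multiplying by $r^{t}$ turns it into the iteration-independent constant $16qB$. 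This leaves $\E[f(x^t)-f^*]\le 2r^{t}\Delta+16qB$, and substituting $q=L/(\mu\theta)$ and $B$ from \eqref{eq:uvbound} gives $16qB=\frac{16}{\mu\theta^2}\big(\frac{3\chi_4\rho\delta\sigma^2}{m}(1+\frac{1}{(1-\delta)n})+\frac{\theta^2\sigma^2}{(1-\delta)nm}+3\chi_5\rho\delta\zeta^2\big)$; bounding $1+\frac{1}{(1-\delta)n}\le 2$ (since $(1-\delta)n\ge1$) yields \eqref{f-f*}.

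For \eqref{gradnorm} I would use the standard consequence of $L$-smoothness of $f$ with $f$ bounded below, $\|\nabla f(x)\|^2\le 2L(f(x)-f^*)$, to get $\E[\|\nabla f(x^t)\|^2]\le 2L\,\E[f(x^t)-f^*]\le 4Lr^{t}\Delta+32LqB$. The second term equals $\frac{32\kappa}{\theta^2}\big(\cdots\big)$ after the same substitution and simplification as above, while the first is controlled via $\Delta=f(x^0)-f^*\le\frac{L}{2}\|x^0-x^*\|^2=\frac{L}{2}R^2$ (from $L$-smoothness and $\nabla f(x^*)=0$), so that $4Lr^{t}\Delta\le 2L^2R^2r^{t}$; together these give \eqref{gradnorm}.

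The genuinely hard work is front-loaded and not part of this final step: establishing the accelerated rate $1-\frac{1}{2\sqrt{q}}$ from the inexact estimate-sequence argument of Lemma~\ref{prop:gc} despite the per-iteration perturbations $\upsilon^t,\varepsilon^t$, and deriving \eqref{eq:uvbound} by tracking how the robust-aggregator error $\Delta_1^t$ and the mini-batch noise $\Delta_2^t$ propagate through the momentum recursions \eqref{update:s}, \eqref{hat-s-t} and \eqref{ts+hs} (where the $\rho\delta$ factor and the $1/m$ and $1/((1-\delta)nm)$ variance-reduction gains appear, and where the constraints of Lemma~\ref{le:v} on $\alpha,\theta$ are needed). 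Granting those, the proof of Theorem~\ref{thm:sc} is the bookkeeping above; the only care required is matching constants and the two smoothness-based simplifications $\Delta\le\frac{L}{2}R^2$ and $\|\nabla f(x)\|^2\le 2L(f(x)-f^*)$.
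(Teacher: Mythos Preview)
Your proposal is correct and follows essentially the same approach as the paper's own proof: combine Lemma~\ref{prop:gc} with the bias bound \eqref{eq:uvbound} (i.e., Lemmas~\ref{le:v} and~\ref{le:Delta2}), collapse the geometric series, and then pass to the gradient-norm bound via $\|\nabla f(x)\|^2\le 2L(f(x)-f^*)$ and $\Delta\le\frac{L}{2}R^2$. Your write-up actually spells out the geometric-sum bookkeeping and the simplification $1+\frac{1}{(1-\delta)n}\le 2$ more carefully than the paper does.
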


\begin{proof}
    Combining Lemmas \ref{prop:gc}, \ref{le:v} and \ref{le:Delta2}, as well as utilizing the closed form formula for the sum of a geometric series, we obtain \eqref{f-f*}. Further from \(\|\nabla f(x^t)\|^2 \leq 2L \left( f(x^t) - f^*\right)\) and $f(x^0) - f^* \leq LR^2/2$, we obtain \eqref{gradnorm}.
\end{proof}

We specify the values of $\alpha$ and $\theta$ to finalize the oracle query complexity of Algorithm \ref{subalgo}.


\begin{corollary}\label{rm:sc}
    Under the conditions in Theorem \ref{thm:sc}, we define $\kappa=\frac{L}{\mu}$ and \(q = \frac{L}{\mu\theta}= \frac{\kappa}{\theta}\), set $\alpha = 0$, $\theta = 1$ and $\beta = \frac{\sqrt{q}-1}{\sqrt{q}+1} = \frac{\sqrt{\kappa}-1}{\sqrt{\kappa}+1}$, as well as set the parameters $T$ and $m$ as
    \[
    T = 2\sqrt{\kappa}\log \frac{4L^2R^2}{\epsilon^2}, \quad m = m_0 = 64\kappa\left(3\rho \delta (1+\frac{1}{(1-\delta)n})+\frac{1}{(1-\delta)n}\right)\frac{\sigma^2}{\epsilon^2}.
    \]
    Then, the output of Algorithm \ref{subalgo} defined by $\tilde x^K = x^T$ satisfies
    \[
    \E[\|\nabla f(\tilde x^K)\|] \leq \epsilon + 4\sqrt{6}\kappa^{1/2}\rho^{1/2}\delta^{1/2}\zeta,
    \]
    with the oracle query complexity
    \begin{align}\label{clp-algo1}
    K = m_0 + mT = O\left(\kappa^{3/2}\left( \rho \delta +\frac{1}{(1 - \delta)n}\right)\frac{\sigma^2}{\epsilon^2}\cdot \log \frac{LR}{\epsilon}\right).
    \end{align}
\end{corollary}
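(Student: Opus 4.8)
The plan is to specialize Theorem~\ref{thm:sc} to the parameters $\alpha=0$, $\theta=1$, $q=\kappa$, $\beta=\frac{\sqrt{\kappa}-1}{\sqrt{\kappa}+1}$ and then balance the two terms in the gradient-norm bound~\eqref{gradnorm}. The first step is to confirm that $\alpha=0$, $\theta=1$ are admissible under the constraints in Lemma~\ref{le:v} (and hence the hypotheses of Theorem~\ref{thm:sc}), which is a routine check of the parameter conditions; since $\theta=1$ there are no extra $\theta^{-2}$ factors to carry. With these substitutions, \eqref{gradnorm} reads
\[
\E[\|\nabla f(x^T)\|^2] \le 2\left(1-\tfrac{1}{2\sqrt{\kappa}}\right)^T L^2R^2 + 32\kappa\left(\tfrac{6\chi_4\rho\delta\sigma^2}{m}+\tfrac{\sigma^2}{(1-\delta)nm}+3\chi_5\rho\delta\zeta^2\right).
\]

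Next I would pick $T$ and $m$ so that each vanishing contribution is at most $\epsilon^2/2$. For the geometric term, using $(1-x)^T\le e^{-xT}$ with $x=\frac{1}{2\sqrt{\kappa}}$, the choice $T=2\sqrt{\kappa}\log\frac{4L^2R^2}{\epsilon^2}$ gives $2(1-\frac{1}{2\sqrt{\kappa}})^T L^2R^2\le 2e^{-T/(2\sqrt{\kappa})}L^2R^2=\epsilon^2/2$. For the variance term, one needs $32\kappa\sigma^2 m^{-1}\big(6\chi_4\rho\delta+\frac{1}{(1-\delta)n}\big)\le\epsilon^2/2$, i.e.\ $m\ge \frac{64\kappa\sigma^2}{\epsilon^2}\big(6\chi_4\rho\delta+\frac{1}{(1-\delta)n}\big)$; the stated value $m=m_0=64\kappa\big(3\rho\delta(1+\frac{1}{(1-\delta)n})+\frac{1}{(1-\delta)n}\big)\frac{\sigma^2}{\epsilon^2}$ satisfies this because the constant $\chi_4$ from Appendix~\ref{proof:v} obeys $6\chi_4\le 3(1+\frac{1}{(1-\delta)n})$. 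Summing the two halves and keeping only the non-vanishing term yields $\E[\|\nabla f(x^T)\|^2]\le \epsilon^2 + 96\chi_5\kappa\rho\delta\zeta^2$.

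Finally, applying Jensen's inequality $\E[\|\nabla f(\tilde x^K)\|]\le\sqrt{\E[\|\nabla f(\tilde x^K)\|^2]}$ together with $\sqrt{a+b}\le\sqrt{a}+\sqrt{b}$ gives $\E[\|\nabla f(\tilde x^K)\|]\le \epsilon+\sqrt{96\chi_5}\,\kappa^{1/2}\rho^{1/2}\delta^{1/2}\zeta$, which is the claimed bound once $\chi_5\le 1$ (from Appendix~\ref{proof:Delta2}) so that $\sqrt{96\chi_5}\le 4\sqrt{6}$. The oracle query count is the initial batch of size $m_0$ plus $m$ queries in each of the $T$ iterations, i.e.\ $K=m_0+mT$; substituting $m=m_0=O\big(\kappa(\rho\delta+\frac{1}{(1-\delta)n})\frac{\sigma^2}{\epsilon^2}\big)$ and $T=O(\sqrt{\kappa}\log\frac{LR}{\epsilon})$, and absorbing $\log\frac{4L^2R^2}{\epsilon^2}=O(\log\frac{LR}{\epsilon})$, yields $K=O\big(\kappa^{3/2}(\rho\delta+\frac{1}{(1-\delta)n})\frac{\sigma^2}{\epsilon^2}\log\frac{LR}{\epsilon}\big)$, as in \eqref{clp-algo1}.

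The calculation itself is just a two-term balancing act, so the main obstacle is bookkeeping: verifying that $\alpha=0$, $\theta=1$ are permitted by Lemma~\ref{le:v} and that the absolute constants $\chi_4,\chi_5$ delivered by Appendices~\ref{proof:v} and~\ref{proof:Delta2} are small enough to justify the explicit prefactors ($64$ inside $m_0$ and $4\sqrt{6}$ in the Byzantine error); were some constraint to force $\theta<1$, one would additionally need to track the $\theta^{-2}$ factors appearing in~\eqref{eq:uvbound} and~\eqref{gradnorm}.
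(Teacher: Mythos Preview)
Your approach matches the paper's: specialize Theorem~\ref{thm:sc} to $\alpha=0$, $\theta=1$ (which, as the paper notes, gives $\chi_4=\chi_5=1$), then choose $T$ and $m$ so that each vanishing term in \eqref{gradnorm} is at most $\epsilon^2/2$, and finish with $\sqrt{a+b}\le\sqrt a+\sqrt b$.

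One bookkeeping slip, however: with $\chi_4=1$ your claimed inequality $6\chi_4\le 3\big(1+\tfrac{1}{(1-\delta)n}\big)$ is false whenever $(1-\delta)n>1$. The point is that the coefficient $6\chi_4\rho\delta$ in \eqref{gradnorm} was obtained from the tighter term $3\chi_4\rho\delta\big(1+\tfrac{1}{(1-\delta)n}\big)$ in \eqref{eq:uvbound} via $1+\tfrac{1}{(1-\delta)n}\le 2$; the corollary's $m$ is calibrated to that \emph{tighter} expression, giving exactly
\[
32\kappa\cdot\frac{\sigma^2}{m}\Big(3\rho\delta\big(1+\tfrac{1}{(1-\delta)n}\big)+\tfrac{1}{(1-\delta)n}\Big)=\frac{\epsilon^2}{2}.
\]
So the stated $m$ works, but not for the reason you wrote. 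A second minor point: both $\chi_4$ and $\chi_5$ are defined in Lemma~\ref{le:v} (Appendix~\ref{proof:v}); Appendix~\ref{proof:Delta2} concerns only the stochasticity bias $\Delta_2^t$ and does not mention $\chi_5$.
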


\begin{proof}
    When $\alpha = 0$ and $\theta = 1$, we have $\chi_4 = \chi_5 = 1$. Substituting the values of $T$ and $m$ into \eqref{gradnorm} yields the above result.
\end{proof}


As shown in Corollary \ref{rm:sc}, the oracle query complexity of Algorithm \ref{subalgo} in terms of $\epsilon$ is $K = mT = O(\epsilon^{-2}\log \epsilon^{-1})$, with an $O(\log \epsilon^{-1})$ gap to the $ \Omega(\epsilon^{-2})$ lower bound in Theorem \ref{thm:flb}. To close this gap, we apply the restart technique and use an increasing batch size, yielding the optimal Algorithm \ref{algo:restart}.

\begin{algorithm}[htbp]
    \caption{Byrd-Nester with restart (Byrd-reNester)}
    \label{algo:restart}
    \begin{algorithmic}
    \STATE{Input: initial point $z(0)$, $T(1)$ in \eqref{t1m1} and $T(p) = \lceil \frac{2L^{1/2}}{\mu^{1/2}}\log 8 \rceil$ for $p \geq 2$, $m(p) = 2^{p-1}$, and $P = \max\{\lceil \log_2 \frac{4L(\epsilon(1))^2}{\epsilon^2} \rceil,1\}$ with $(\epsilon(1))^2$ in \eqref{eps1}.}
    \FOR{$p=1,\cdots,P$}
    \STATE{Output $z(p)$ from Byrd-Nester with $x^0=z(p-1)$, $T = T(p)$ and $m = m(p)$.}
    \ENDFOR
    \RETURN $\tilde x^K = z(P)$.
    \end{algorithmic}
\end{algorithm}

Here, \( T(p) \), \( m(p) \) and \( \epsilon(p) \) respectively represent the maximum number of Byrd-Nester calls, the batch size, the expected optimization error for the $p$-th call of Byrd-Nester in Algorithm \ref{algo:restart}. The parameters $\alpha$, $\theta$ and $\beta$ are the same as those in Corollary \ref{rm:sc}. The optimal oracle query complexity of Algorithm \ref{algo:restart} is established in the following Theorem.

\begin{theorem}\label{thm:sc-restart}
    Under the conditions in Theorem \ref{thm:sc}, we define $\kappa=\frac{L}{\mu}$ and \(q = \frac{L}{\mu\theta}= \frac{\kappa}{\theta}\), set $\alpha = 0$, $\theta = 1$ and $\beta = \frac{\sqrt{q}-1}{\sqrt{q}+1} = \frac{\sqrt{\kappa}-1}{\sqrt{\kappa}+1}$.
    Then, the output of Algorithm \ref{algo:restart} defined by $\tilde x^K$ satisfies
    \begin{align}\label{nverr-rs}
    \E[\|\nabla f(\tilde x^K)\|] \leq 8\sqrt{2}\kappa^{1/2}\rho^{1/2}\delta^{1/2}\zeta + \epsilon,
    \end{align}
    with the oracle query complexity
    \begin{align}\label{clp-rs}
    K = \sum_{p=1}^P m(p)T(p) = O\left(\kappa^{1/2}\log \frac{LR}{\epsilon} + \kappa^{3/2}\left( \rho \delta +\frac{1}{(1 - \delta)n}\right)\frac{\sigma^2}{\epsilon^2}\right).
    \end{align}
\end{theorem}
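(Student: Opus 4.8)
The plan is to bootstrap the single–run guarantee of Byrd-Nester (Theorem \ref{thm:sc}) through a geometric–decay/restart argument. First I would specialize Theorem \ref{thm:sc} to the parameters used here, $\alpha = 0$ and $\theta = 1$ (so $\chi_4 = \chi_5 = 1$ and $q = \kappa$): for a single call of Algorithm \ref{subalgo} launched from a point $x^0$ with $T$ iterations and batch size $m$,
\[
\E[f(x^T) - f^*] \;\le\; 2\Big(1 - \tfrac{1}{2\sqrt{\kappa}}\Big)^{T}\big(f(x^0) - f^*\big) \;+\; \mathcal N(m), \qquad \mathcal N(m) := \frac{16}{\mu}\Big(\frac{6\rho\delta\sigma^2 + \sigma^2/((1-\delta)n)}{m} + 3\rho\delta\zeta^2\Big).
\]
Since this holds for every deterministic $x^0$, conditioning on $z(p-1)$ and then taking the total expectation yields, with $e_p := \E[f(z(p)) - f^*]$,
\[
e_p \;\le\; 2\Big(1 - \tfrac{1}{2\sqrt\kappa}\Big)^{T(p)} e_{p-1} \;+\; \mathcal N(m(p)).
\]
For $p \ge 2$ the choice $T(p) = \lceil 2\kappa^{1/2}\log 8\rceil$ makes $2(1 - \tfrac{1}{2\sqrt\kappa})^{T(p)} \le 2e^{-\log 8} = \tfrac14$, while $m(p) = 2^{p-1}$ makes the $\sigma^2$-part of $\mathcal N(m(p))$ equal to $\tfrac{C_\sigma}{\mu 2^{p-1}}$ with $C_\sigma := 16(6\rho\delta + \tfrac{1}{(1-\delta)n})\sigma^2$, on top of the $m$-independent residual $\tfrac{48\rho\delta\zeta^2}{\mu}$.

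Next I would unroll the recursion $e_p \le \tfrac14 e_{p-1} + \tfrac{C_\sigma}{\mu 2^{p-1}} + \tfrac{48\rho\delta\zeta^2}{\mu}$ for $p \ge 2$. Summing the geometric series gives $e_P \le 4^{-(P-1)}e_1 + \tfrac{4C_\sigma}{\mu 2^{P}} + \tfrac{64\rho\delta\zeta^2}{\mu}$; both the $e_1$-term and the $C_\sigma$-term decay like $2^{-P}$ (the rate bottleneck being the batch-doubling, not the $\tfrac14$ contraction), while the last term is the fixed floor. Here $e_1 \le (\epsilon(1))^2$ with $\epsilon(1)$ as in \eqref{eps1}, and $T(1)$ from \eqref{t1m1} — chosen large enough, using $f(z(0)) - f^* \le LR^2/2$, that stage $1$ both exhausts the $\Delta$-reduction and reaches the $m = 1$ noise level — so that $(\epsilon(1))^2 = \Theta\big(\tfrac{(\rho\delta + 1/((1-\delta)n))\sigma^2}{\mu}\big)$ up to the $\zeta^2$ floor. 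With $P = \max\{\lceil \log_2 \tfrac{4L(\epsilon(1))^2}{\epsilon^2}\rceil, 1\}$ the vanishing part is at most $\tfrac{\epsilon^2}{8L}$, so $e_P \le \tfrac{\epsilon^2}{8L} + \tfrac{64\rho\delta\zeta^2}{\mu}$. To finish \eqref{nverr-rs}, invoke $\|\nabla f(x)\|^2 \le 2L(f(x) - f^*)$ (valid for $L$-smooth lower-bounded $f$, as already used in Theorem \ref{thm:sc}) together with Jensen's inequality to get $\E[\|\nabla f(z(P))\|] \le \sqrt{2L\,e_P}$, and then $\sqrt{a+b} \le \sqrt a + \sqrt b$ to split $\sqrt{2L(\tfrac{\epsilon^2}{8L} + \tfrac{64\rho\delta\zeta^2}{\mu})} \le \tfrac{\epsilon}{2} + \sqrt{128\kappa\rho\delta\zeta^2} = \tfrac{\epsilon}{2} + 8\sqrt2\,\kappa^{1/2}\rho^{1/2}\delta^{1/2}\zeta \le \epsilon + 8\sqrt2\,\kappa^{1/2}\rho^{1/2}\delta^{1/2}\zeta$.

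For the complexity \eqref{clp-rs}, I would sum $K = \sum_{p=1}^P m(p)T(p) = m(1)T(1) + \lceil 2\kappa^{1/2}\log 8\rceil\sum_{p=2}^P 2^{p-1}$: since $m(1) = 1$, the first term is $T(1) = O(\kappa^{1/2}\log\tfrac{LR}{\epsilon})$, and the second is $O(\kappa^{1/2}\,2^P) = O\big(\kappa^{1/2}\cdot \tfrac{L(\epsilon(1))^2}{\epsilon^2}\big) = O\big(\kappa^{3/2}(\rho\delta + \tfrac{1}{(1-\delta)n})\tfrac{\sigma^2}{\epsilon^2}\big)$ after substituting $(\epsilon(1))^2$ and $L/\mu = \kappa$; adding the two pieces yields \eqref{clp-rs}. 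The main obstacle is the bookkeeping of constants around \eqref{eps1}–\eqref{t1m1}: one must pin these down precisely enough that (a) the residual floor is exactly $\tfrac{64\rho\delta\zeta^2}{\mu}$, which is what produces the stated Byzantine error $8\sqrt2\,\kappa^{1/2}\rho^{1/2}\delta^{1/2}\zeta$, (b) $P$ halvings suffice to drive the vanishing part below $\epsilon^2/(8L)$, and (c) $P$ is small enough ($2^P = O(L(\epsilon(1))^2/\epsilon^2)$) that the geometric sum of batch sizes does not inflate the complexity beyond \eqref{clp-rs}; the cross-restart composition itself is routine via the tower property, since Theorem \ref{thm:sc} is stated for an arbitrary — hence possibly random, then conditioned — initial point.
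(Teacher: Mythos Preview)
Your proposal is correct and follows essentially the same route as the paper's proof: both specialize Theorem~\ref{thm:sc} to $\alpha=0,\theta=1$, use the contraction $2(1-\tfrac{1}{2\sqrt\kappa})^{T(p)}\le \tfrac14$ for $p\ge 2$, double the batch so that $\mathcal N(m(p))$ halves its $\sigma^2$-part while the $\zeta^2$-floor accumulates geometrically to at most $64\rho\delta\zeta^2/\mu$, and finish via $\|\nabla f\|^2\le 2L(f-f^*)$ together with the geometric sum $\sum_{p=2}^P 2^{p-1}=O(2^P)=O(L(\epsilon(1))^2/\epsilon^2)$. The only cosmetic differences are that the paper tracks the targets $\epsilon_p^2=\epsilon_1^2/2^{p-1}$ explicitly and separates the case $P=1$ (i.e.\ $\epsilon_1^2\le \epsilon^2/(2L)$), whereas you unroll the recursion directly; also note that $e_1\le(\epsilon(1))^2$ should really read $e_1\le(\epsilon(1))^2+48\rho\delta\zeta^2/\mu$, but the extra floor term is absorbed by the $4^{-(P-1)}$ factor and does not affect the stated constants.
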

\begin{proof}
    See Appendix \ref{proof:sc-restart}.
\end{proof}

The oracle query complexity established in Theorem \ref{thm:sc-restart} is optimal, exactly matching the lower bound for strongly convex optimization in Theorem \ref{thm:flb}. Below, we demonstrate that Algorithm \ref{algo:restart} is also optimal in the following special cases.

\begin{itemize}
    \item {\bf Specialization to} $\zeta^2 = 0$. Setting $\zeta^2 = 0$ yields
        \(
            \E[\|\nabla f(\tilde x^K)\|] \leq \epsilon
        \)
    and the oracle query complexity is \[
    O\left(\frac{\rho\delta \sigma^2}{\epsilon^2} + \frac{\sigma^2}{(1-\delta)n\epsilon^2} + \kappa^{1/2}\log\frac{L R}{\epsilon}\right).
    \] This complexity matches the lower bound for strongly convex optimization with data homogeneity (\(\zeta^2=0\)), as shown in Theorem \ref{thm:flb}.
    \item {\bf Specialization to} $\delta = 0$. Setting $\delta = 0$ yields \(
            \E[\|\nabla f(\tilde x^K)\|] \leq \epsilon
        \) and the oracle query complexity is
    \begin{align*}
    O\left(\kappa^{1/2}\log \frac{LR}{\epsilon} + \frac{\kappa^{3/2}}{n}\frac{\sigma^2}{\epsilon^2}\right).
    \end{align*}
    This complexity matches the lower bound for strongly convex optimization in the absence of Byzantine nodes (\(\delta=0\)), as shown in Theorem \ref{thm:flb}.
    \item {\bf Specialization to} $\sigma^2 = 0$. Setting $\sigma^2 = 0$, $P=1$, $m(1) = 1$, and
    $T(1) = 2{\kappa}^{1/2}$ $\log \frac{2L^2R^2}{\epsilon^2}$ yields
    \(
    \E[\|\nabla f(\tilde x^K)\|] \leq 8\sqrt{2}\kappa^{1/2}\rho^{1/2}\delta^{1/2}\zeta + \epsilon
    \)
    and the oracle query complexity is
    \[
    O\left(2{\kappa}^{1/2}\log \frac{LR}{\epsilon}\right).
    \]
    This complexity matches the lower bound for strongly convex optimization without stochasticity (\(\sigma^2=0\)), as shown in Theorem \ref{thm:flb}.
    \item {\bf Specialization to} $\sigma^2 = \zeta^2 = 0$ {\bf or} $\sigma^2 = \delta = 0$. Setting $\sigma^2 = \zeta^2 ~ ({\rm or}~ \delta) = 0$, $K=1$, $m(1) = 1$ and
    \(
        T(1) = 2\sqrt{\kappa}\log \frac{2L^2R^2}{\epsilon^2}
    \) yields
    \(
        \E[\|\nabla f(\tilde x^K)\|] \leq \epsilon
    \)
    and the oracle query complexity is
    \[
    O\left(2\kappa^{1/2}\log \frac{LR}{\epsilon}\right).
    \]
    This complexity matches the lower bound for strongly convex optimization without stochasticity but with data homogeneity ($\sigma^2 = \zeta^2 = 0$) or without stochasticity but in the absence of Byzantine nodes ($\sigma^2 = \delta = 0$), as shown in Theorem \ref{thm:flb}.
\end{itemize}

%


\subsection{Non-convex optimization}\label{ssec:nc}
For non-convex optimization, we begin with establishing the convergence of Algorithm \ref{subalgo}.
\begin{theorem}\label{thm:nc1}
    Given $\zeta^2\ge0$, $\rho\ge0$, $\sigma^2\ge0$, $L > 0$ and \(\delta \in [0,\delta_{\rm max}]\), for any distributed problem in the form of \eqref{prob-general} having at least $(1-\delta)n$ honest nodes with any function $f \in \mathcal{F}$, any stochastic gradient oracle $ \mathsf O \in \mathcal O$ and any \((\delta_{\rm max},\rho)\)-robust aggregator \(\mathsf A \in \mathcal A\), consider Algorithm \ref{subalgo} with the step size $\eta$ set in \eqref{stepsize}.
    With $\beta = 1 - 12L\eta$, if batch size $m=O(1)$, $m_0 = m/(L^2\eta^2)$ and the parameters $\alpha$, $\theta$ meet the requirements in Lemma \ref{le:v} with $\chi_1+\chi_2 = O(L\eta)$ and $\chi_3 = O(1) \geq 0$, as well as
    \begin{align*}
        (1-\theta-\beta)^2 \leq \chi_6 (1-\beta)^2,\\
        \chi_7 \leq \frac{1}{3}-6\chi_6-(\theta+\beta^2+\theta\beta-1)^2,
    \end{align*}
    for some $\chi_6 \geq 0$ and $\chi_7 = \Theta (1) > 0$, then the iterate \(y^t\) generated by Algorithm \ref{subalgo} satisfies
    \begin{align*}
    \frac{1}{T}\sum_{t=1}^T\|\nabla f( y^{t-1})\|^2 &\leq O \left( \sqrt{\frac{L\Delta+\sigma^2/n}{T}}\sqrt{\left(\frac{1}{(1-\delta)n}+\rho\delta\right)  \sigma^2} + \frac{L\Delta}{T}+\frac{\sigma^2}{Tn} +\rho\delta\zeta^2 \right),
    \end{align*}
    where $\Delta = f(x^0)-f^*$.
\end{theorem}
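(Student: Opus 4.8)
The plan is to run a Lyapunov argument on a combination of the objective value $f(x^t)$ and the gradient‑tracking error of the honest running average $\bar s^t:=\frac1{|\mathcal H|}\sum_{i\in\mathcal H}s^t_i$. The key observation is that, with $\beta=1-12L\eta$ and $\theta$ taken close to $1-\beta$, the node updates \eqref{update:s} make $s^t_i$ a running average of the honest stochastic gradients at $\{y^\tau\}$ with horizon $1/(1-\beta)=\Theta(1/(L\eta))$, so that $\bar s^t\approx\nabla f(y^{t-1})$ up to a controllable tracking error, and the server step $x^t=x^{t-1}-\eta\hat s^t$ from \eqref{update:x} is a step of \emph{effective} size $\Theta(\eta)$ along $\bar s^t$ perturbed by the aggregation discrepancy $\eta(\bar s^t-\hat s^t)$ --- this reorganization (which is what the identity \eqref{detailupdate} makes explicit, with the momentum folded in) is essential, since the raw step size $\eta\theta=\Theta(L\eta^2)$ is too small to use directly. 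Applying $L$‑smoothness of $f$ at $x^{t-1}$, using $\|\nabla f(x^{t-1})-\nabla f(y^{t-1})\|\le L\|x^{t-1}-y^{t-1}\|=O(L\eta\|\hat s^{t-1}\|)$, taking conditional expectation so the martingale part of $\bar g^{t-1}:=\frac1{|\mathcal H|}\sum_{i\in\mathcal H}g^{t-1}_i$ drops out by Assumption~\ref{ass:u}, and using Young's inequality on the cross terms, I would obtain for $\eta$ small a per‑step estimate of the form
\[
\E[f(x^t)]\le\E[f(x^{t-1})]-c_0\eta\,\E[\|\nabla f(y^{t-1})\|^2]+c_1\eta\,\E[\|\bar s^t-\nabla f(y^{t-1})\|^2]+c_2\eta\,\E[\|\bar s^t-\hat s^t\|^2]+(\text{lower order}),
\]
with absolute constants $c_0,c_1,c_2>0$; the algebraic conditions ensuring $c_0>0$ after all the Young splits are exactly the stated hypotheses $(1-\theta-\beta)^2\le\chi_6(1-\beta)^2$ and $\chi_7\le\tfrac13-6\chi_6-(\theta+\beta^2+\theta\beta-1)^2$.

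The two error terms are then controlled. The aggregation discrepancy $\bar s^t-\hat s^t$ is handled by Lemma~\ref{le:v} --- whose hypotheses are precisely the stated conditions on $\alpha,\theta$ and on $\chi_1,\chi_2,\chi_3$ --- which applies the $(\delta_{\rm max},\rho)$‑robustness inequality \eqref{eq:agg} to the inputs $\{s^t_i\}$ and bounds $\E\|\bar s^t-\hat s^t\|^2$ by $\rho\delta$ times the empirical dispersion of the node‑level running averages; the crucial point is that $s^t_i$ is itself a running average with horizon $\Theta(1/(L\eta))$, so its stochastic dispersion is $O((1-\beta)\sigma^2/m)=O(L\eta\sigma^2/m)$ rather than $O(\sigma^2/m)$, and hence this term contributes only a \emph{vanishing} piece plus the persistent heterogeneity piece $\Theta(\rho\delta\zeta^2)$. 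For the tracking error, writing $\psi^t:=\bar s^t-\nabla f(y^{t-1})$ and using \eqref{update:s} gives $\psi^t=\beta\psi^{t-1}+\beta(\nabla f(y^{t-2})-\nabla f(y^{t-1}))+(1-\beta)(\bar g^{t-1}-\nabla f(y^{t-1}))$ up to $O((1-\theta-\beta))$ corrections, whose squared norm contracts at rate $1-\Theta(1-\beta)$, is driven by the vanishing noise $O((1-\beta)^2\sigma^2/((1-\delta)nm))$, and has drift input $O(\tfrac{1}{1-\beta}L^2\|y^{t-1}-y^{t-2}\|^2)$; since $1-\beta=12L\eta$ and $\|y^{t-1}-y^{t-2}\|=O(\eta\max_\tau\|\hat s^\tau\|)$, the drift coefficient is $\Theta(L\eta)$ and can be absorbed into the gradient term, while the initial error $\psi^0$ is made negligible by the large initial batch $m_0=m/(L^2\eta^2)$.

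Combining the descent inequality with the $\psi$‑recursion in the Lyapunov function $\Phi^t=\E[f(x^t)]+\lambda\E[\|\psi^t\|^2]$ with $\lambda=\Theta(1/L)$ makes the $\|\psi^t\|^2$ contributions telescope (thanks to the contraction) and the drift $\|\nabla f\|^2$‑terms carry a coefficient dominated by $c_0\eta$; summing over $t=1,\dots,T$, dividing by $\Theta(\eta)T$, and using $\Phi^0-\Phi^T\le\Delta+O(\sigma^2/((1-\delta)nm_0))$, I obtain
\[
\frac1T\sum_{t=1}^T\E[\|\nabla f(y^{t-1})\|^2]\le O\!\left(\frac{L\Delta}{\eta T}+\frac{\sigma^2}{(1-\delta)n\eta Tm}+L\eta\Big(\tfrac{1}{(1-\delta)n}+\rho\delta\Big)\sigma^2+\rho\delta\zeta^2\right).
\]
Choosing $\eta$ as in \eqref{stepsize} --- the value balancing the $1/(\eta T)$ terms against $L\eta(\tfrac{1}{(1-\delta)n}+\rho\delta)\sigma^2$ by AM--GM, subject to the cap $\eta=O(1/L)$ --- gives the claimed rate, with $L\Delta/T$ and $\sigma^2/(nT)$ appearing when the cap is active and $\rho\delta\zeta^2$ surviving as the non‑vanishing floor; since the returned iterate is $y^{t'}$ with $t'$ uniform on $\{0,\dots,T-1\}$, $\E[\|\nabla f(y^{t'})\|^2]$ equals the averaged left‑hand side.

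The main obstacle is the treatment of the aggregation discrepancy. A per‑step bound $\E\|\bar s^t-\hat s^t\|^2=O(\rho\delta(\zeta^2+\sigma^2/m))$ --- all one gets from aggregating a single mini‑batch --- would leave a persistent $\rho\delta\sigma^2$ error floor that is absent from the target bound, so the analysis must route the aggregation through the node‑level running averages $s^t_i$ to expose the $\Theta(L\eta)$ variance reduction, and the bookkeeping must be arranged so that this reduction survives the Young splits while the coefficient of $\|\nabla f(y^{t-1})\|^2$ in the potential recursion stays strictly positive; this is the delicate part, and it is where the hypotheses involving $\chi_1,\dots,\chi_7$, together with the precise choice $\beta=1-12L\eta$ and $\theta$ near $1-\beta$, come from. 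Everything else --- the smoothness expansions, the variance computations using independence across honest nodes and iterations, and the geometric‑series telescoping --- is routine.
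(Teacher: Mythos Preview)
Your proposal is correct and follows essentially the same strategy as the paper's proof: a descent inequality via smoothness, a tracking-error recursion contracting at rate $1-\Theta(1-\beta)$, the aggregation bound from (the proof of) Lemma~\ref{le:v}, a Lyapunov combination with weight $\Theta(1/L)$ on the tracking term, and finally the step-size optimization \eqref{stepsize}. The one substantive difference is the choice of coordinates. The paper rewrites the update as $y^t=y^{t-1}-\eta p^t$ with $p^t:=(1+\beta)\hat s^t-\beta\hat s^{t-1}$ and tracks $\bar e^t:=\bar p^t-\nabla f(y^{t-1})$, so the descent lemma is a single clean application of smoothness at $y^{t-1}$ (no need to control $\nabla f(x^{t-1})-\nabla f(y^{t-1})$), and the $\chi_6$ condition enters in the $\bar e^t$ recursion while $\chi_7$ governs the net coefficient of $\|\nabla f(y^{t-1})\|^2$ after the Lyapunov combination. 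Your version works on the $x$-trajectory with $\psi^t:=\bar s^t-\nabla f(y^{t-1})$; this is equally valid but forces you to absorb the extra cross term $\eta\langle\nabla f(x^{t-1})-\nabla f(y^{t-1}),\hat s^t\rangle=O(L\eta^2\|\hat s^{t-1}\|\,\|\hat s^t\|)$, which is harmless since $L\eta$ is small but adds bookkeeping. A small slip: your pre-optimization bound should read $\Delta/(\eta T)$ rather than $L\Delta/(\eta T)$; the factor $L$ only appears after the cap $\eta=O(1/L)$ is active, and with that correction the AM--GM step reproduces the theorem's rate exactly.
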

\begin{proof}
    See Appendix \ref{section:Theorem 16}.
\end{proof}

Based on Theorem \ref{thm:nc1}, we specify the values of $\alpha$ and $\theta$ to determine the oracle query complexity of Algorithm \ref{subalgo} in the following corollary.
\begin{corollary}\label{cor:nc1}
    Under conditions of Theorem \ref{thm:nc1}, let $\theta = 1 - \beta$ and $\alpha = 1$ so that $\chi_1 = 1 - \beta = 12L\eta$, $\chi_2=\chi_6=0$, $\chi_3=1$ and $\chi_7 = 1/3$. Consider Algorithm \ref{subalgo} with the step size $\eta$ set in \eqref{stepsize}. The output of Algorithm \ref{subalgo} defined by $\tilde x^K$ satisfies
    \[
    \E [\|\nabla f( \tilde x^K)\|] \leq \sqrt{210} \rho^{1/2}\delta^{1/2}\zeta + \epsilon,
    \]
    with the oracle query complexity
    \[
    K = m_0 + mT = O \left(\frac{L\Delta\rho\delta\sigma^2}{\epsilon^4}+\frac{L\Delta\sigma^2}{(1-\delta)n\epsilon^4}+\frac{\rho\delta\sigma^4}{(1-\delta)n\epsilon^4}+\frac{L\Delta}{\epsilon^2}+\frac{\sigma^2}{(1-\delta)n\epsilon^2}\right).
    \]
\end{corollary}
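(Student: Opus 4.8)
The plan is to instantiate Theorem~\ref{thm:nc1} with the parameters $\theta = 1-\beta$, $\alpha = 1$, check that this choice is admissible, and then pick the run length $T$ so that the vanishing part of the resulting bound drops below $\epsilon$. First I would verify admissibility. With $\theta = 1-\beta$ we get $1-\theta-\beta = 0$, so the requirement $(1-\theta-\beta)^2 \le \chi_6(1-\beta)^2$ holds with $\chi_6 = 0$; likewise $\theta+\beta^2+\theta\beta-1 = (1-\beta)+\beta^2+(1-\beta)\beta-1 = 0$, so the condition $\chi_7 \le \tfrac13 - 6\chi_6 - (\theta+\beta^2+\theta\beta-1)^2$ reduces to $\chi_7 \le \tfrac13$, met by $\chi_7 = \tfrac13$. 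Since $\beta = 1-12L\eta$, we have $\chi_1 = 1-\beta = 12L\eta = O(L\eta)$ and $\chi_2 = 0$, hence $\chi_1+\chi_2 = O(L\eta)$ as Lemma~\ref{le:v} demands, while $\alpha = 1$ forces $\chi_3 = 1 = O(1)$. Thus all hypotheses of Theorem~\ref{thm:nc1} are in force and its conclusion applies to Algorithm~\ref{subalgo} with the step size $\eta$ of \eqref{stepsize}.

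Next I would translate the averaged-gradient guarantee of Theorem~\ref{thm:nc1} into a bound on $\E[\|\nabla f(\tilde x^K)\|]$. Because $\tilde x^K = y^{t'}$ with $t'$ drawn uniformly from $\{0,\dots,T-1\}$, the tower rule gives $\E[\|\nabla f(\tilde x^K)\|^2] = \tfrac1T\sum_{t=1}^T\E[\|\nabla f(y^{t-1})\|^2]$, which Theorem~\ref{thm:nc1} bounds by
\[
O\!\left(\sqrt{\frac{L\Delta+\sigma^2/n}{T}}\sqrt{\Big(\frac{1}{(1-\delta)n}+\rho\delta\Big)\sigma^2} + \frac{L\Delta}{T} + \frac{\sigma^2}{Tn} + \rho\delta\zeta^2\right).
\]
Using $\sqrt{a+b}\le\sqrt a+\sqrt b$ together with Jensen's inequality $\E[\|\nabla f(\tilde x^K)\|]\le\sqrt{\E[\|\nabla f(\tilde x^K)\|^2]}$, the non-vanishing part contributes $\sqrt{c\,\rho\delta\zeta^2}$, where $c$ is the constant Theorem~\ref{thm:nc1} yields for $\chi_3=1$, $\chi_6=0$, $\chi_7=\tfrac13$; carrying that constant through gives $c = 210$, i.e.\ the term $\sqrt{210}\,\rho^{1/2}\delta^{1/2}\zeta$.

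Then I would choose $T$ large enough that the three vanishing terms are each at most a small multiple of $\epsilon^2$, i.e.
\[
T = \Theta\!\left(\frac{(L\Delta+\sigma^2/n)\big(\frac{1}{(1-\delta)n}+\rho\delta\big)\sigma^2}{\epsilon^4} + \frac{L\Delta}{\epsilon^2} + \frac{\sigma^2}{n\epsilon^2}\right),
\]
so that $\E[\|\nabla f(\tilde x^K)\|]\le \sqrt{210}\,\rho^{1/2}\delta^{1/2}\zeta + \epsilon$. Finally I would count oracle queries via $K = m_0 + mT$: each iteration uses $m = O(1)$ queries and the warm start uses $m_0 = m/(L^2\eta^2)$ queries, so substituting the value of $\eta$ from \eqref{stepsize} into $m_0$ and expanding $T$ (absorbing $\tfrac1n\le\tfrac1{(1-\delta)n}$ and the lower-order pieces) should produce
\[
K = O\!\left(\frac{L\Delta\rho\delta\sigma^2}{\epsilon^4}+\frac{L\Delta\sigma^2}{(1-\delta)n\epsilon^4}+\frac{\rho\delta\sigma^4}{(1-\delta)n\epsilon^4}+\frac{L\Delta}{\epsilon^2}+\frac{\sigma^2}{(1-\delta)n\epsilon^2}\right).
\]

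The hardest part will be the bookkeeping in this last step: one must confirm that $\eta$ as fixed by \eqref{stepsize} makes the warm-start cost $m_0 = m/(L^2\eta^2)$ collapse precisely to the $c_1$-type terms $\tfrac{\rho\delta\sigma^4}{(1-\delta)n\epsilon^4}+\tfrac{\sigma^2}{(1-\delta)n\epsilon^2}$ while being dominated by $mT$ on the remaining terms, that the sub-terms emerging from $mT$ such as $\tfrac{\sigma^4}{(1-\delta)n^2\epsilon^4}$ are absorbed (using $\rho\delta \ge \delta \ge 1/n$) so that no spurious higher-order term such as $\rho^2\delta^2\sigma^4\epsilon^{-4}$ survives, and that the constants propagated through Theorem~\ref{thm:nc1} under $\chi_1 = 12L\eta$, $\chi_3 = 1$, $\chi_6 = 0$, $\chi_7 = \tfrac13$ indeed accumulate to exactly $210$ in the Byzantine-error term.
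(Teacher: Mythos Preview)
Your proposal is correct and follows exactly the paper's approach: the corollary is obtained by specializing Theorem~\ref{thm:nc1} with $\alpha=1$, $\theta=1-\beta$, reading off the resulting bound on $\tfrac{1}{T}\sum_t\|\nabla f(y^{t-1})\|^2$, and then choosing $T$ so the vanishing part is $O(\epsilon^2)$. Two small cautions on the bookkeeping you already flagged: the absorption device ``$\rho\delta\ge\delta\ge 1/n$'' is not guaranteed by the standing assumptions (in particular $\delta\ge 1/n$ need not hold when the Byzantine fraction is an overestimate), so you should instead bound the stray term $\tfrac{\sigma^4}{(1-\delta)n^2\epsilon^4}$ directly by $\tfrac{1}{(1-\delta)n}\cdot\tfrac{\sigma^2}{(1-\delta)n\epsilon^2}\cdot\tfrac{\sigma^2}{\epsilon^2}$ and absorb it into the displayed terms; and no $\rho^2\delta^2\sigma^4\epsilon^{-4}$ term actually arises in the expansion of $T$, so that worry can be dropped.
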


In Corollary \ref{cor:nc1}, \(\theta = 1 - \beta = 12L\eta = O(\frac{1}{\sqrt{T}})\), vanishing as $T$ goes to infinity. Therefore, the node-level Nesterov's acceleration in
\eqref{update:s} is also effective for variance reduction \citep{liu2020improved,karimireddy2021learning}, as shown in Lemma \ref{lem:sgdm-byz-error}. For this reason, Algorithm \ref{subalgo} no longer requires a large batch size $m$. However, there is also an $O(\frac{\rho\delta\sigma^4}{(1-\delta)n\epsilon^4}+\frac{\sigma^2}{(1-\delta)n\epsilon^2})$ gap between the oracle query complexity in Corollary \ref{cor:nc1} and the lower bound in Theorem \ref{thm:flb}. The key idea to close this gap is to approximately solve a series of strongly convex surrogates, each calling Byrd-reNester once, via an inexact proximal point algorithm outlined in Algorithm \ref{algo-nc}. We establish the oracle query complexity of Algorithm \ref{algo-nc} in the following theorem.




\begin{algorithm}[ht]
    \caption{Inexact Proximal Point Algorithm with Byrd-reNester}
    \label{algo-nc}
    \begin{algorithmic}
    \STATE{Input: initial point $\varkappa^0$, maximum number of Byrd-reNester calls $\Gamma$.}
    \FOR{$\gamma=1,\cdots,\Gamma$}
    \STATE{Set $f_i^\gamma(z) = f_i(z) + L\|z-\varkappa^{\gamma-1}\|^2$ for all $i \in \mathcal{H}$.}
    \STATE{Output $\varkappa^\gamma$ by applying Byrd-reNester to $f^\gamma(z)=\frac{1}{|\mathcal{H}|} \sum_{i \in \mathcal{H}} f_i^{\gamma}(z)$ with $z(0)=\varkappa^{\gamma-1}$.}
    \ENDFOR
    \RETURN $\tilde x^K = \varkappa^{\gamma'}$, where $\gamma'$ is randomly chosen from $1,\cdots,\Gamma$.
    \end{algorithmic}
\end{algorithm}

\begin{theorem}\label{thm:nc}
    Given $\zeta^2\ge0$, $\rho\ge0$, $\sigma^2\ge0$, $L > 0$, and \(\delta \in [0,\delta_{\rm max}]\), for any distributed problem in the form of \eqref{prob-general} having at least $(1-\delta)n$ honest nodes with any function $f \in \mathcal{F}$, any stochastic gradient oracle $\mathsf O \in \mathcal O$ and any \((\delta_{\rm max},\rho)\)-robust aggregator \(\mathsf A \in \mathcal A\), consider Algorithm \ref{algo-nc} with $\Gamma = \lceil 32L\Delta\epsilon^{-2} \rceil$ where $\Delta = f(\varkappa^0)-f^*$, the step size $\eta=\frac{1}{3L}$, $\alpha=0$, $\beta = \frac{\sqrt{3}-1}{\sqrt{3}+1}$, and $\theta = 1$. Then the output of Algorithm \ref{algo-nc} defined by $\tilde x^K$ satisfies
    \begin{align}\label{nverr-nc}
    \E[\|\nabla f(\tilde x^K)\|] \leq 16\sqrt{5}\rho^{1/2}\delta^{1/2}\zeta + \epsilon,
    \end{align}
    with oracle query complexity
    \begin{align}\label{clp-nc}
    K = O\left(\frac{L\Delta\rho\delta\sigma^2}{\epsilon^4}+\frac{L\Delta\sigma^2}{(1-\delta)n\epsilon^4}+\frac{L\Delta}{\epsilon^2}\log\frac{L\Delta(1+\rho\delta\zeta^2)}{\epsilon^2}\right).
    \end{align}
\end{theorem}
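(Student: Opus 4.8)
The plan is to read Algorithm~\ref{algo-nc} as an inexact proximal-point method: the non-convex objective $f$ is attacked through a chain of $\Gamma$ strongly convex surrogates $f^\gamma(z)=\tfrac1{|\mathcal H|}\sum_{i\in\mathcal H}\big(f_i(z)+L\|z-\varkappa^{\gamma-1}\|^2\big)$, each solved (inexactly) by one call of Byrd-reNester, so that Theorem~\ref{thm:sc-restart} applies off the shelf to every surrogate. First I would verify that each surrogate is admissible: since $f_i$ is $L$-smooth, $f_i^\gamma$ is $L$-strongly convex and $3L$-smooth, so $f^\gamma$ has condition number $\kappa_\gamma=3=O(1)$; because the quadratic $L\|z-\varkappa^{\gamma-1}\|^2$ is identical across all honest nodes, the heterogeneity bound of Assumption~\ref{ass:i} remains $\zeta^2$, and the stochastic oracle $\nabla F(z,\xi)+2L(z-\varkappa^{\gamma-1})$ still has variance at most $\sigma^2$ (Assumption~\ref{ass:u}). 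Moreover, the choices $\eta=\tfrac1{3L}$, $\alpha=0$, $\theta=1$, $\beta=\tfrac{\sqrt3-1}{\sqrt3+1}$ in Theorem~\ref{thm:nc} are exactly the Byrd-reNester parameters of Theorem~\ref{thm:sc-restart} specialized to $\kappa=3$. Hence, conditionally on $\varkappa^{\gamma-1}$, the $\gamma$-th call returns $\varkappa^\gamma$ with the squared-norm guarantee $\E\big[\|\nabla f^\gamma(\varkappa^\gamma)\|^2\big]\lesssim\rho\delta\zeta^2+(\epsilon')^2$ (the intermediate bound behind \eqref{nverr-rs}, analogous to \eqref{gradnorm}), at oracle cost $O\big((\rho\delta+\tfrac1{(1-\delta)n})\tfrac{\sigma^2}{(\epsilon')^2}+\log\tfrac{LR_\gamma}{\epsilon'}\big)$, where $R_\gamma=\|\varkappa^{\gamma-1}-z^\gamma_*\|$ and $z^\gamma_*=\arg\min_z f^\gamma(z)$.

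Next I would run the standard inexact prox-point descent. Writing $g^\gamma:=\nabla f^\gamma(\varkappa^\gamma)=\nabla f(\varkappa^\gamma)+2L(\varkappa^\gamma-\varkappa^{\gamma-1})$ and using $f^\gamma(z^\gamma_*)\le f^\gamma(\varkappa^{\gamma-1})=f(\varkappa^{\gamma-1})$ together with the $L$-strong-convexity inequality $f^\gamma(\varkappa^\gamma)-f^\gamma(z^\gamma_*)\le\tfrac1{2L}\|g^\gamma\|^2$ gives the one-step bound
\[
f(\varkappa^\gamma)+L\|\varkappa^\gamma-\varkappa^{\gamma-1}\|^2\le f(\varkappa^{\gamma-1})+\tfrac1{2L}\|g^\gamma\|^2 ,
\]
and combining it with $\|\nabla f(\varkappa^\gamma)\|\le\|g^\gamma\|+2L\|\varkappa^\gamma-\varkappa^{\gamma-1}\|$ yields $\|\nabla f(\varkappa^\gamma)\|^2\le 6\|g^\gamma\|^2+8L\big(f(\varkappa^{\gamma-1})-f(\varkappa^\gamma)\big)$. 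Taking expectations, averaging over $\gamma=1,\dots,\Gamma$, and telescoping the function-value terms gives
\[
\frac1\Gamma\sum_{\gamma=1}^{\Gamma}\E\|\nabla f(\varkappa^\gamma)\|^2\le\frac6\Gamma\sum_{\gamma=1}^{\Gamma}\E\|g^\gamma\|^2+\frac{8L\Delta}{\Gamma}.
\]
With $\Gamma=\lceil 32L\Delta\epsilon^{-2}\rceil$ the last term is at most $\epsilon^2/4$, and choosing the inner target $\epsilon'=\Theta(\epsilon)$ makes $\E\|g^\gamma\|^2\lesssim\rho\delta\zeta^2+\epsilon^2$, so the average is at most $(16\sqrt5)^2\rho\delta\zeta^2+\epsilon^2$. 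Since $\tilde x^K=\varkappa^{\gamma'}$ with $\gamma'$ uniform on $\{1,\dots,\Gamma\}$, Cauchy--Schwarz gives $\big(\E\|\nabla f(\tilde x^K)\|\big)^2\le\tfrac1\Gamma\sum_\gamma\E\|\nabla f(\varkappa^\gamma)\|^2$, and $\sqrt{a+b}\le\sqrt a+\sqrt b$ delivers \eqref{nverr-nc}.

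For the complexity \eqref{clp-nc} I would sum the per-call costs over $\gamma=1,\dots,\Gamma$. With $\kappa_\gamma=3$ and $\epsilon'=\Theta(\epsilon)$, the non-logarithmic part of each call is $O\big((\rho\delta+\tfrac1{(1-\delta)n})\tfrac{\sigma^2}{\epsilon^2}\big)$, so the $\Gamma=O(L\Delta/\epsilon^2)$ calls contribute $O\big(\tfrac{L\Delta\rho\delta\sigma^2}{\epsilon^4}+\tfrac{L\Delta\sigma^2}{(1-\delta)n\epsilon^4}\big)$. For the logarithmic part I would bound the warm-start radius: from $\nabla f^\gamma(z^\gamma_*)=0$ and $L$-smoothness of $f$, $2LR_\gamma=\|\nabla f(z^\gamma_*)\|\le\|\nabla f(\varkappa^{\gamma-1})\|+LR_\gamma$, hence $LR_\gamma\le\|\nabla f(\varkappa^{\gamma-1})\|$; the descent lemma for the $L$-smooth $f$ gives $\|\nabla f(\varkappa^{\gamma-1})\|^2\le 2L\big(f(\varkappa^{\gamma-1})-f^*\big)$, and propagating the one-step bound from $\varkappa^0$ shows $\E[f(\varkappa^{\gamma-1})]-f^*\le\Delta+\tfrac1{2L}\sum_{j<\gamma}\E\|g^j\|^2=O\big(\Delta(1+\rho\delta\zeta^2/\epsilon^2)\big)$ since $\gamma\le\Gamma$. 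Thus $\log(LR_\gamma/\epsilon')=O\big(\log\tfrac{L\Delta(1+\rho\delta\zeta^2)}{\epsilon^2}\big)$, and the $\Gamma$ logarithmic terms sum to $O\big(\tfrac{L\Delta}{\epsilon^2}\log\tfrac{L\Delta(1+\rho\delta\zeta^2)}{\epsilon^2}\big)$; adding the three pieces gives \eqref{clp-nc}.

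The step I expect to be the main obstacle is the control of the \emph{random} warm-start radii $R_\gamma$ entering the per-call complexity. Because $f$ is non-convex, $\|\nabla f(\varkappa^\gamma)\|$ is not pinned to the running function value in the usual convex manner, so one has to combine the smooth-function inequality $\|\nabla f(x)\|^2\le 2L(f(x)-f^*)$ with a careful accounting of how the inexactness $\|g^j\|^2$ accumulates into $f(\varkappa^\gamma)$ over the $\Gamma$ outer iterations; fortunately this control is only needed inside a logarithm, so a crude polynomial-in-$(L,\Delta,\zeta,\rho,\delta,1/\epsilon)$ bound suffices. A secondary subtlety is keeping the expectations rigorous: Theorem~\ref{thm:sc-restart} must be invoked conditionally on $\varkappa^{\gamma-1}$ and then combined via the tower rule, and one should use the squared-norm form of its guarantee (as in \eqref{gradnorm}) rather than squaring $\E\|\nabla f^\gamma(\varkappa^\gamma)\|$.
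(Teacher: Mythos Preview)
Your proposal is correct and follows the same inexact proximal-point skeleton as the paper, but the two proofs organize the inner/outer bookkeeping slightly differently.

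The paper takes the \emph{function-value} gap $f^\gamma(\varkappa^\gamma)-f^\gamma(\varkappa^{\gamma*})$ as the inexactness measure. It decomposes $\|\nabla f(\varkappa^\gamma)\|^2$ through the prox center $\varkappa^{\gamma*}$, bounding $\|\varkappa^\gamma-\varkappa^{\gamma*}\|^2$ and $\|\varkappa^{\gamma-1}-\varkappa^{\gamma*}\|^2$ separately by the function gap (their inequalities \eqref{eq:aa69}--\eqref{eq:aa70}), and then invokes the function-value guarantee \eqref{f:redisna} of Byrd-reNester directly. You instead take $\|g^\gamma\|^2=\|\nabla f^\gamma(\varkappa^\gamma)\|^2$ as the inexactness measure, decompose through $\varkappa^{\gamma-1}$ rather than $\varkappa^{\gamma*}$, and telescope $L\|\varkappa^\gamma-\varkappa^{\gamma-1}\|^2\le f(\varkappa^{\gamma-1})-f(\varkappa^\gamma)+\tfrac{1}{2L}\|g^\gamma\|^2$ via the PL inequality for $f^\gamma$. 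Both decompositions are valid and land on the same $O(\rho\delta\zeta^2+\epsilon^2)+O(L\Delta/\Gamma)$ average; the paper's route is one step shorter because Byrd-reNester's proof already furnishes the function-value bound \eqref{f:redisna}, whereas your route first has to convert that bound into a squared-gradient bound (which you correctly flag in your last paragraph). For the complexity, you and the paper both control the random warm start by propagating $\E[f(\varkappa^{\gamma-1})]-f^*\le\Delta+O(\Gamma(\epsilon^2+\rho\delta\zeta^2)/L)=O(\Delta(1+\rho\delta\zeta^2/\epsilon^2))$ and then absorbing this into the logarithm; the paper simply writes this with $f^\gamma(\varkappa^{\gamma-1})-f^{\gamma*}$ inside the log rather than going through $R_\gamma$ and $\|\nabla f(\varkappa^{\gamma-1})\|$, but the resulting log factor is the same up to constants.
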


\begin{proof}
    See Appendix \ref{section:Theorem 18}.
\end{proof}

Comparing the upper bound of oracle query complexity in Theorem \ref{thm:nc} with the lower bound in Theorem \ref{thm:flb}, we notice that there is only a logarithmic factor gap between their third terms. The source of the third term in \ref{clp-nc} is that Algorithm \ref{algo-nc} solves \( O(\epsilon^{-2}) \) strongly convex surrogates, each surrogate calls Byrd-reNester once, each Byrd-reNester calls Byrd-Nester once, and the complexity of Byrd-Nester is \( O(\log \epsilon^{-2}) \). Nevertheless, when $\epsilon$ is small, the third term is not dominant compared to the first and second terms, such that the logarithmic factor gap is negligible.




For the following special cases, the established oracle query complexity of Algorithm \ref{algo-nc} remains optimal (up to a logarithmic factor).
\begin{itemize}
    \item {\bf Specialization to} $\zeta^2 = 0$. Setting $\zeta^2 = 0$ in \eqref{nverr-nc} yields
        \(
            \E[\|\nabla f(\tilde x^K)\|] \leq \epsilon
        \)
    and the oracle query complexity is \[
    O\left(\frac{L\Delta\rho\delta\sigma^2}{\epsilon^4}+\frac{L\Delta\sigma^2}{(1-\delta)n\epsilon^4}+\frac{L\Delta}{\epsilon^2}\log\frac{L\Delta}{\epsilon^2}\right).
    \] This complexity matches the lower bound for non-convex optimization with data homogeneity (\(\zeta^2=0\)), as shown in Theorem \ref{thm:flb}.
    \item {\bf Specialization to} $\delta = 0$. Setting $\delta = 0$ in \eqref{nverr-nc} and \eqref{clp-nc} demonstrate that to achieve \(
            \E[\|\nabla f(\tilde x^K)\|] \leq \epsilon
        \), the oracle query complexity is
    \begin{align*}
    O\left(\frac{L\Delta\sigma^2}{n\epsilon^4}+\frac{L\Delta}{\epsilon^2}\log\frac{L\Delta}{\epsilon^2}\right).
    \end{align*}
    This complexity matches the lower bound for non-convex optimization in the absence of Byzantine nodes (\(\delta=0\)), as shown in Theorem \ref{thm:flb}.
    \item {\bf Specialization to} $\sigma^2 = 0$. Setting $\sigma^2 = 0$ in \eqref{nverr-nc} and \eqref{clp-nc} yields
    \(
    \E[\|\nabla f(\tilde x^K)\|] \leq  16\sqrt{5}\rho^{1/2}\delta^{1/2}\zeta + \epsilon
    \)
    and the oracle query complexity is
    \[
    O\left(\frac{L\Delta}{\epsilon^2}\log\frac{L\Delta(1+\rho\delta\zeta^2)}{\epsilon^2}\right).
    \]
    This complexity matches the lower bound for non-convex optimization without sto- chasticity (\(\sigma^2=0\)), as shown in Theorem \ref{thm:flb}.
    \item {\bf Specialization to} $\sigma^2 = \zeta^2 = 0$ {\bf or} $\sigma^2 = \delta = 0$. Setting $\sigma^2 = \zeta^2 ({\rm or}~ \delta) = 0$ in \eqref{nverr-nc} and \eqref{clp-nc} yields
    \(
        \E[\|\nabla f(\tilde x^K)\|] \leq \epsilon
    \)
    and the oracle query complexity is
    \[
    O\left(\frac{L\Delta}{\epsilon^2}\log\frac{L\Delta}{\epsilon^2}\right).
    \]
    This complexity exactly matches the lower bound for non-convex optimization without stochasticity but with data homogeneity ($\sigma^2 = \zeta^2 = 0$) or without stochasticity but in the absence of Byzantine nodes ($\sigma^2 = \delta = 0$), as shown in Theorem \ref{thm:flb}.
\end{itemize}

\section{Numerical experiments}\label{sec:exp}


In this section, we conduct extensive numerical experiments to evaluate the performance of Algorithm \ref{subalgo}. Here we do not consider Algorithms \ref{algo:restart} and \ref{algo-nc}, which exhibit strong theoretical guarantees at the cost of complicated hyperparameter tuning.

\begin{figure}[ht]
    \hspace{-0.01\linewidth}
    \includegraphics[width=\linewidth]{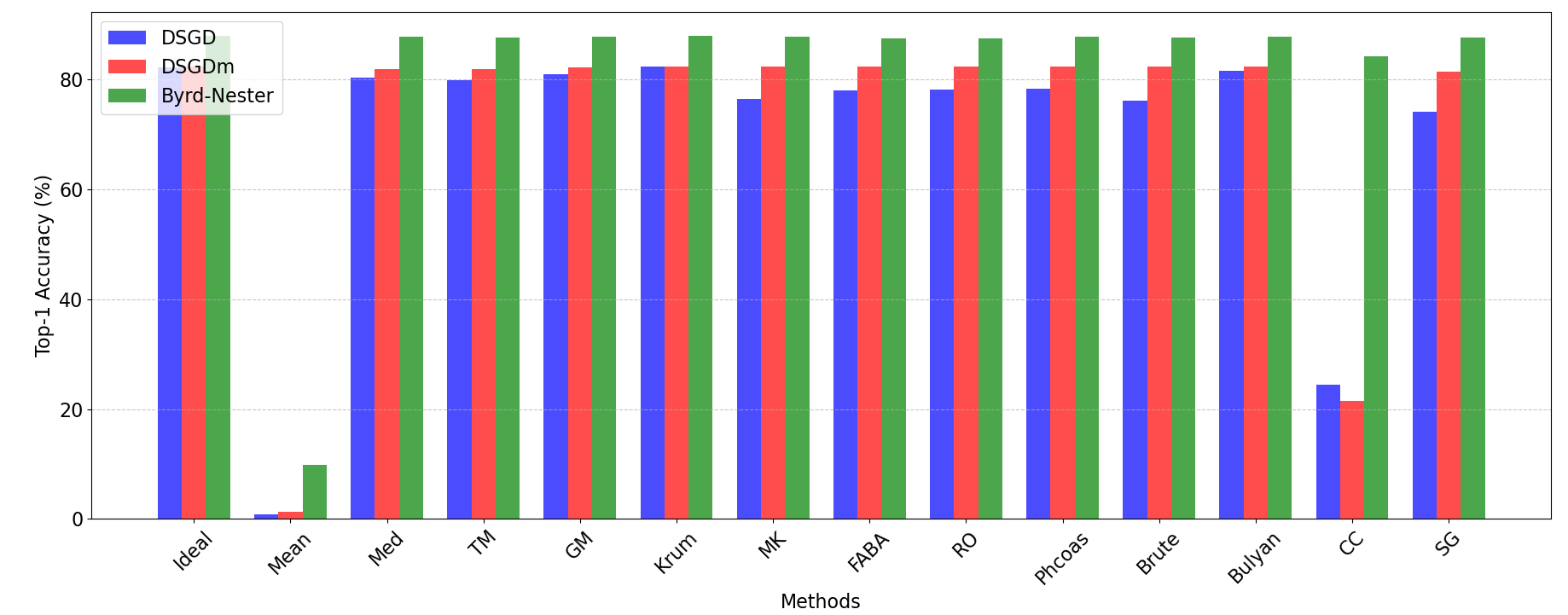}
    \caption{Worst-case maximum top-1 accuracy of DSGD, DSGDm and Algorithm 1.}
    \label{fig:worst}
\end{figure}

\noindent {\bf Experimental setup.} We consider two tasks, logistic regression and convolutional neural network training. For the first task, we consider a distributed network of 10 nodes within which 2 are Byzantine. For the second task, we consider a distributed network of 30 nodes within which 5 are Byzantine. The training dataset is MNIST with 10 classes, each having 6,000 training samples. The entire training dataset is sorted by labels, divided into chunks equal to the number of honest nodes, allocated to different honest nodes, and then shuffled within each honest node.
%

\noindent {\bf Byzantine attacks.} We implement nine Byzantine attacks, including ``Gaussian Attack (GA) \citep{ye2024ge}", ``Sign Flipping (SF) \citep{li2019rsa}", ``Label Flipping (LF) \citep{xiao2012adversarial}", ``Sample Duplicating \citep{prakash2020mitigating}", ``Zero Value", ``Isolation \citep{song2020analyzing}", ``A Little is Enough (ALIE) \citep{baruch2019little}", ``Inner Product Manipulation (IPM) \citep{xie2020fall}", and ``Bit Flipping (BF) \citep{rakin2019bit}".

\noindent {\bf Robust aggregation rules.} We implement fourteen robust aggregation rules, including ``Ideal", ``Mean", ``Median (Med) \citep{yin2018byzantine}", ``Trimmed Mean (TM) \citep{yin2018byzantine}", ``Krum \citep{blanchard2017machine}", ``Multi Krum (MK) \citep{blanchard2017machine}", ``FABA \citep{xia2019faba}", ``Remove Outliers (RO) \citep{xia2020defenses}", ``Phocas \citep{xie2018phocas}", ``Brute \citep{guerraoui2018hidden}", ``Bulyan \citep{guerraoui2018hidden}", ``Centered Clipping(CC) \citep{karimireddy2021learning}", ``Geometric Median (GM) \citep{wu2020federated}", and ``Sign Guard (SG) \citep{xu2021signguard}''.

\noindent {\bf Baselines.} We choose Byzantine-robust distributed mini-batch SGD (DSGD) and its momentum variant (DSGDm, \cite{karimireddy2020byzantine}) as the baselines. In the three algorithms, the step size is set to 0.1, the batch size is 32, and the total number of epoches is 45. Because the combinations of the compared algorithms, Byzantine attacks and robust aggregation rules are immense, below we only demonstrate some of the results. More results can be found via running our source code at \url{https://github.com/sqkkk/Byrd-Nester}

\begin{figure}[h!]
    \hspace{-0.1\linewidth}
    \includegraphics[width=1.2\linewidth]{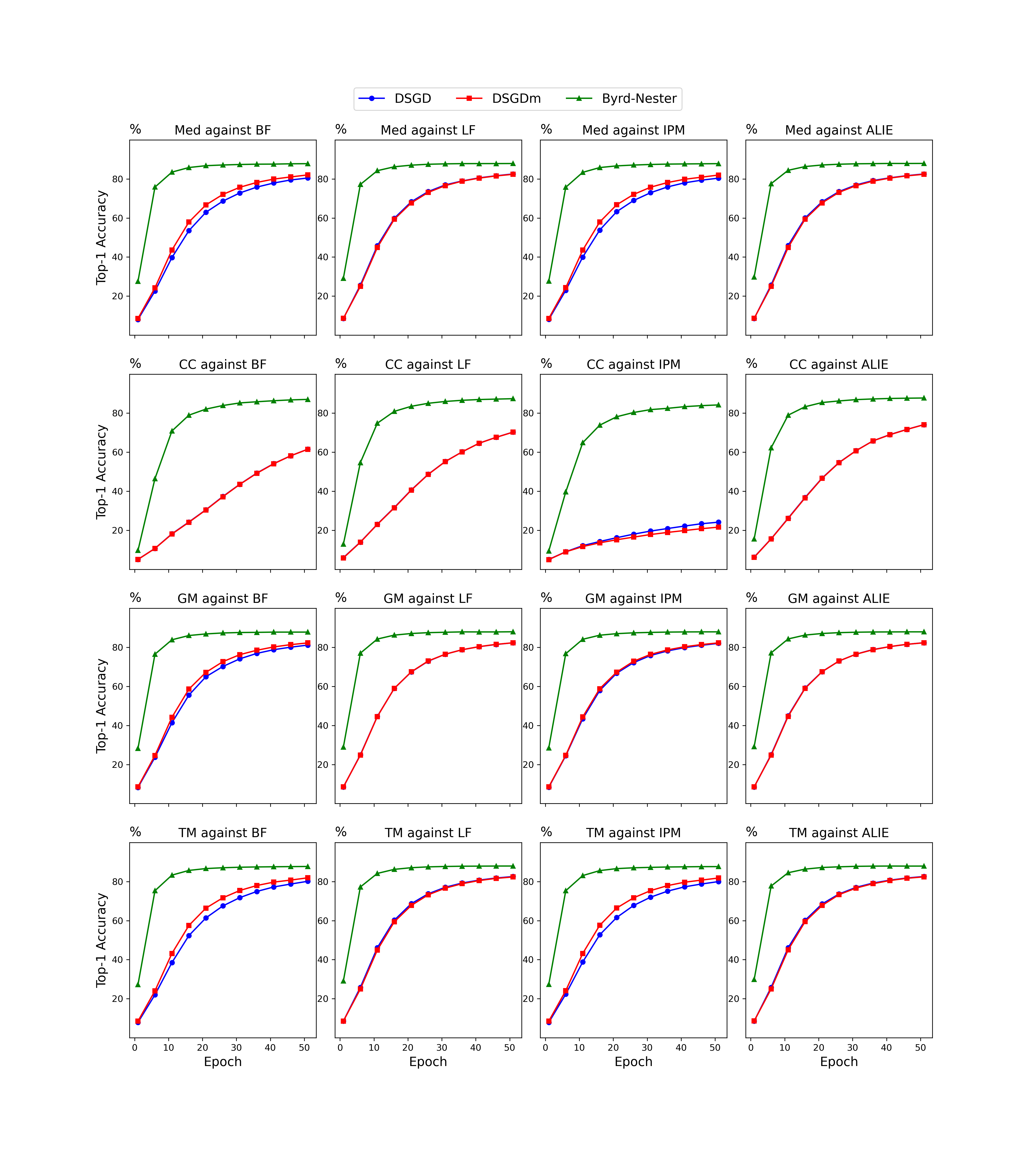}\vspace{-1cm}
    \caption{Top-$1$ test accuracies of DSGD, DSGDm and Algorithm 1 with Med, CC, GM and TM for logistic regression, under BF, LF, IPM and ALIE attacks.}
    \label{fig:expsc}
\end{figure}

\subsection{Logistic regression}

First, we consider logistic regression with squared $l_2$ regularization. For each combination of the three compared algorithms and the fourteen robust aggregation rules, under each of the nine Byzantine attacks, we record the maximum accuracy obtained within the total number of epoches. Then, we depict the minimum of the nine maximum accuracies (termed as the worst-case maximum accuracy) in Figure \ref{fig:worst}. This performance metric is of practical importance, as it reflects the ability of each combination under the worst-case attack. We observe that Algorithm \ref{subalgo} outperforms DSGD and DSGDm when combined with most of the robust aggregation rules.


Figure \ref{fig:expsc} depicts the convergence of the three compared algorithms under BF, LF, IPM and ALIE attacks when combined with Med, CC, GM and TM aggregation rules. DSGD and DSGDm perform similarly, whereas Algorithm \ref{subalgo} enjoys faster convergence and higher accuracy thanks to its effective usage of the historical stochastic gradients.

\begin{figure}[h!]
    \hspace{-0.1\linewidth}
    \includegraphics[width=1.2\linewidth]{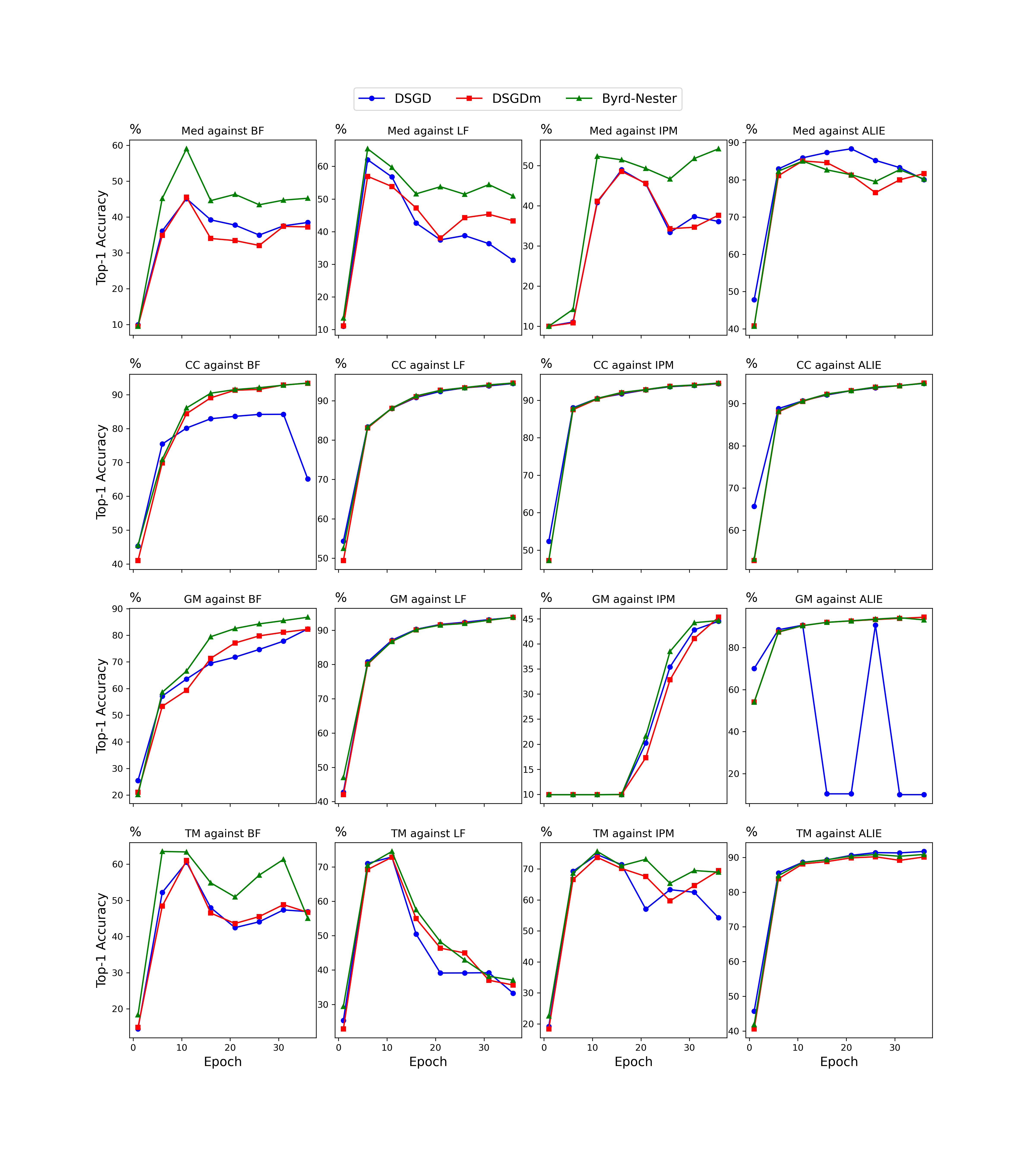}\vspace{-1cm}
    \caption{Top-$1$ test accuracies of DSGD, DSGDm and Algorithm 1 with Med, CC, GM and TM for convolutional neural network training, under BF, LF, IPM and ALIE attacks.}
    \label{fig:expnc}
\end{figure}

\subsection{Convolutional neural network training}

Second, we consider training a convolutional neural network that consists of two convolutional layers, followed by two fully connected layers. Figure \ref{fig:expnc} depicts the top-1 test accuracies of the three compared algorithms under BF, LF, IPM and ALIE attacks when combined with Med, CC, GM and TM aggregation rules. Algorithm \ref{subalgo} gains the best performance in most cases, while DSGDm is also competitive.

\section{Conclusions} \label{sec:con}

We established tight lower bounds for Byzantine-robust distributed first-order stochastic optimization methods in both strongly convex and non-convex stochastic optimization. A key observation was that with data heterogeneity, the convergence error contains a non-vanishing Byzantine error and a vanishing optimization error. Therefore, we respectively established the lower bounds on the Byzantine error and the oracle query complexity to achieve an arbitrarily small optimization error. In contrast, the analysis in \citep{alistarh2018byzantine} was confined to homogeneous data distribution and did not account for the Byzantine error, while the work of \citep{karimireddy2020byzantine} did not explore the oracle query complexity. We also observed significant discrepancies between our established lower bounds and the existing upper bounds. To fill this gap, we leveraged the techniques of Nesterov's acceleration and variance reduction to develop novel Byzantine-robust distributed stochastic optimization methods that provably match these lower bounds, up to logarithmic factors. This fact implies that our established lower bounds are tight, and our proposed methods can simultaneously attain the optimal Byzantine robustness and the optimal oracle query complexity. Our future work is to explore the extension to Byzantine-robust decentralized stochastic optimization without the aid of any server.

\newpage

\vskip 0.2in
\bibliography{sample}

\newpage

\appendix
\section{Robustness coefficient of Centered Clipping (CC)}
\label{appendix:A}

We consider single-iteration CC with a sufficiently good starting point. Single-iteration CC outputs
\[
w = \frac{1}{n}\sum_{i=1}^n s_i ~~{\rm with}~~ s_i =  v + (w_i-v)\min \left(1,\frac{\tau}{\|w_i-v\|}\right),
\]
where $v$ is the starting point satisfying {$\|v - \bar w\|^2 \leq \frac{1}{|\mathcal H|}\sum_{i \in \mathcal H}\|w_i - \bar w\|^2$} and $\tau \geq 0$ is the clipping threshold. The output can also be rewritten as
\[
w = (1-\delta) \frac{1}{|\mathcal H|}\sum_{i \in \mathcal H}s_i + \delta  \frac{1}{|\mathcal B|} \sum_{i \in \mathcal B}s_i.
\]

Hence, we have
\begin{align}
    \|w - \bar w\|^2 &= \|(1-\delta) \frac{1}{|\mathcal H|}\sum_{i \in \mathcal H}s_i + \delta  \frac{1}{|\mathcal B|} \sum_{i \in \mathcal B}s_i - \bar w\|^2\\
    & \leq 2(1-\delta)^2 \|\frac{1}{|\mathcal H|}\sum_{i \in \mathcal H}(s_i - w_i) \|^2 + 2\delta^2 \| \frac{1}{|\mathcal B|} \sum_{i \in \mathcal B}(s_i- \bar w)\|^2 \notag \\
    & \leq 2(1-\delta)^2 \frac{1}{|\mathcal H|}\sum_{i \in \mathcal H}\|s_i - w_i \|^2 + 2\delta^2 \frac{1}{|\mathcal B|} \sum_{i \in \mathcal B}\| s_i- \bar w\|^2. \notag
\end{align}
For $i \in \mathcal H$, if $w_i$ is not clipped, we have $s_i = w_i$. Otherwise, we have
\[
\|s_i - w_i\| \leq  \frac{\| v - w_i\|^2}{\tau}  \leq  \frac{2\|v - \bar w\|^2 +  2\|w_i - \bar w\|^2}{\tau}.
\]
For $i \in \mathcal B$, we have
\[
\|s_i - \bar w\|^2 \leq  2\| s_i - v\|^2 + 2 \|v - \bar w\|^2 \leq 2\tau^2 + 2 \|v - \bar w\|^2.
\]

Therefore, setting $\tau^2 = \frac{1-\delta}{\delta}\sqrt{\frac{2}{|\mathcal H|}\sum_{i \in \mathcal H}(\|v - \bar w\|^2 + \|w_i - \bar w \|^2)}$ yields
\begin{align}
    \|w - \bar w\|^2 & \leq 2(1-\delta)^2 \frac{1}{|\mathcal H|}\sum_{i \in \mathcal H}\frac{(2\|v - \bar w\|^2 +  2\|w_i - \bar w\|^2)^2}{\tau^2} + 4\delta^2\tau^2 + 4 \delta^2\|v - \bar w\|^2\\
    &\leq \frac{8\sqrt{2}\delta(1-\delta)}{\sqrt{|\mathcal H|}}\sum_{i \in \mathcal H}(\|v - \bar w\|^2 +  \|w_i - \bar w\|^2)+4\delta^2 \|v - \bar w\|^2 \notag \\
    &\leq \left(\frac{8\sqrt{2}\delta(1-\delta)}{\sqrt{|\mathcal H|}}+4\delta^2+8\sqrt{2}\delta(1-\delta)\sqrt{|\mathcal H|}\right)\frac{1}{|\mathcal H|}\sum_{i \in \mathcal H}\|w_i - \bar w\|^2 \notag \\
    &\leq \frac{18\sqrt{2}\delta\sqrt{|\mathcal H|}}{|\mathcal H|}\sum_{i \in \mathcal H}\|w_i - \bar w\|^2, \notag
\end{align}
which completes the analysis.

\section{Proofs of main results}\label{appendix:proofs}

\subsection{Proof of Lemma \ref{le:nverr}}\label{proof:nverr}
\begin{proof}
We prove Lemma \ref{le:nverr} through constructing two one-dimensional deterministic problems without any Byzantine nodes, such that any method $\mathsf M \in \mathcal{M}$, equipped with a certain $(\delta_{\rm max},\rho)$-robust aggregator $\mathsf A \in \mathcal A$, cannot distinguish the two. Recall that the estimated fraction of the Byzantine nodes $\delta$ satisfies $0 < \delta \leq \delta_{\rm max} < 0.5$. In the following proof, we assume that \(\delta n\) is an integer. Otherwise, the conclusion still holds true if we rounding down \(\delta n\) in the derivation.

In the first problem, the function and the gradient of node \(i\) are respectively defined as
\[
f_{1,i}(x) =   \begin{cases}
             \frac{1}{2}x^2 - \delta^{-1/2} \zeta x, ~~~ &  i =1,\cdots,\delta n,\\
            \frac{1}{2}x^2, &  i =\delta n + 1,\cdots, n,
             \end{cases}
\]
\[
\nabla f_{1,i}(x) =   \begin{cases}
            x - \delta^{-1/2} \zeta, ~~~ &  i =1,\cdots,\delta n,\\
            x, &  i =\delta n + 1,\cdots, n,
             \end{cases}
\]
where $x \in \mathbb R$. Therefore, we have
\[
f_1(x) = \frac{1}{n}\sum_{i = 1}^n f_{1,i}(x) = \frac{1}{2}x^2 - \delta^{1/2} \zeta x \quad \text{and} \quad \nabla f_1(x) = \frac{1}{n}\sum_{i = 1}^n \nabla f_{1,i}(x) = x - \delta^{1/2} \zeta.
\]
It is easy for us to verify that $f_1 \in \mathcal F$. Assumption \ref{ass:basic} is obviously satisfied, and Assumption \ref{ass:i} holds from
\[
\frac{1}{n}\sum_{i = 1}^n \|\nabla f_{1,i}(x) - \nabla f_1(x)\|^2 = \delta \zeta^2 (\delta^{-1/2} - \delta^{1/2})^2 + (1 - \delta)\zeta^2 \delta = (1 - \delta)\zeta^2 \leq \zeta^2.
\]
In the second problem, the function and gradient of node \(i\) are respectively defined as
\[
f_{2,i}(x) = f_{1,i}(x) + \alpha_{\rm min} \rho^{1/2}\delta^{1/2}\zeta x, ~~~~ i =1,\cdots, n,
\]
\[
\nabla f_{2,i}(x) = \nabla f_{1,i}(x) + \alpha_{\rm min} \rho^{1/2}\delta^{1/2}\zeta, ~~~~ i =1,\cdots, n,
\]
where $0 < \alpha_{\rm min} \leq \sum_{j=1}^t \sum_{l=1}^m \alpha^{(j,l)}$ and $\{\alpha^{(j,l)}\}$ are the weights in \eqref{re:p}.
Therefore, we have
\[
f_2(x) = f_1(x) + \alpha_{\rm min} \rho^{1/2}\delta^{1/2}\zeta x \quad \text{and} \quad \nabla f_2(x) = \nabla f_1(x) + \alpha_{\rm min} \rho^{1/2}\delta^{1/2}\zeta.
\]
Again, Assumptions \ref{ass:basic} and \ref{ass:i} are both satisfied. Observe that the minimizers of $f_1$ and $f_2$ are different.

For any method $\mathsf M \in \mathcal{M}$, according to the update rule in \eqref{re:p}, we obtain $w^t_{2,i} = w^t_{1,i} + \alpha_{\rm min} \rho^{1/2}\delta^{1/2}\zeta$ and $\bar w_2^t = \bar w_1^t + \alpha_{\rm min} \rho^{1/2}\delta^{1/2}\zeta$.

Now, we construct a certain $(\delta_{\rm max},\rho)$-robust aggregator $\mathsf A \in \mathcal A$, which always outputs
\[
w^t =\bar w_1^t + \frac{\alpha_{\rm min}}{2}\rho^{1/2}\delta^{1/2}\zeta = \bar w_2^t - \frac{\alpha_{\rm min}}{2}\rho^{1/2}\delta^{1/2}\zeta,
\]
given the inputs of either $\{w^t_{1,i}\}$ or $\{w^t_{2,i}\}$. To prove that such an aggregator $\mathsf A$ is indeed $(\delta_{\rm max},\rho)$-robust, we refer to
\begin{align}
    \|w^t - \bar w_1^t\|^2 =\frac{\alpha_{\rm min}^2}{4}\rho\delta \zeta^2 \leq \alpha_{\rm min}^2\rho\delta \zeta^2(1-\delta) \leq \frac{\rho\delta}{n}\sum_{i = 1}^n \|w^t_{1,i} - \bar w_1^t\|^2,\label{eq:s-s1}\\
    \|w^t - \bar w_2^t\|^2 =\frac{\alpha_{\rm min}^2}{4}\rho\delta \zeta^2 \leq \alpha_{\rm min}^2\rho\delta \zeta^2(1-\delta) \leq \frac{\rho\delta}{n}\sum_{i = 1}^n \|w^t_{2,i} - \bar w_2^t\|^2.\label{eq:s-s2}
\end{align}
For the inequalities, we use the fact that $\delta \leq \delta_{\rm max} < 0.5$. For the last inequality in \eqref{eq:s-s1}, we recall that
\[
w_{1,i}^{t} = \sum_{j=1}^{t} \sum_{l=1}^{m} \alpha^{(j,l)} \nabla f_{1,i} ( x^{(j,l)} )
\]
and
\[
\nabla f_{1,i}(x) - \nabla f_{1}(x) = \left\{
\begin{array}{ll}
    \zeta\delta^{1/2} - \zeta\delta^{-1/2}, ~~~ & i = 1, \dots, \delta n, \\
    \zeta\delta^{1/2}, & i = \delta n+1, \dots, n.
\end{array}
\right.
\]
Then letting \(\alpha = \sum_{j=1}^{t} \sum_{l=1}^{m} \alpha^{(j,l)}\), we have
\begin{align*}
    \frac{1}{n} \sum_{i=1}^{n} \| w_{1,i}^{t} - \bar w_{1}^{t} \|^2 &= \frac{1}{n} \sum_{i=1}^{\delta n} \alpha^2 \zeta^2 \| \delta^{1/2} - \delta^{-1/2} \|^2 + \frac{1}{n} \sum_{i=\delta n+1}^{n} \alpha^2 \| \zeta\delta^{1/2} \|^2\\
    &= \alpha^2 \delta \zeta^2 \left( \delta^{1/2} - \delta^{-1/2} \right)^2 + \alpha^2 ( 1 - \delta ) \zeta^2 \delta\\
    &= \alpha^2 \zeta^2 \left[ (1- \delta)^2 + (1-\delta) \delta \right] = \alpha^2 \zeta^2 (1-\delta) \geq \alpha_{\rm min}^2 \zeta^2 (1-\delta).
\end{align*}
The same derivation holds for the last equality in \eqref{eq:s-s2}.

Since the $(\delta_{\rm max},\rho)$-robust aggregator $\mathsf A \in \mathcal A$ yields the same $w^t$ for the two problems, any method $\mathsf M \in \mathcal{M}$ shall return the same $x^{t+1}$. In consequence, any method $\mathsf M \in \mathcal{M}$ is unable to distinguish the two problems and the Byzantine error occurs on at least one of them. For any output $\tilde{x}$ that is irrelevant with the number of iterations and the number of oracle queries, we have
\begin{align*}
    \max_{j\in\{1,2\}} \| \nabla f_j ( \tilde x ) \|^2 &\geq \frac{1}{2} \| \nabla f_1 ( \tilde x ) \|^2 + \frac{1}{2} \| \nabla f_2 ( \tilde x ) \|^2\\
    &= \frac{1}{2} (\tilde x-\zeta\delta^{1/2})^2 + \frac{1}{2} (\tilde x-\zeta\delta^{1/2}+\alpha_{\rm min}\rho^{1/2}\zeta\delta^{1/2})^2\\
    &= (\tilde x - \zeta\delta^{1/2})^2 + (\tilde x - \zeta\delta^{1/2})\alpha_{\rm min}\rho^{1/2}\zeta\delta^{1/2} + \frac{\alpha_{\rm min}^2}{2}\rho\delta\zeta^2\\
    &= (\tilde x-\zeta\delta^{1/2}+\frac{\alpha_{\rm min}}{2}\rho^{1/2}\zeta\delta^{1/2})^2 + \frac{\alpha_{\rm min}^2}{4}\rho\delta\zeta^2
    \ge \frac{\alpha_{\rm min}^2}{4}\rho\delta\zeta^2.
\end{align*}
which together with \(\alpha_{\rm min} > 0 \) establishes the lower bound.
\end{proof}

\subsection{Proof of Lemma \ref{le:verrb}}\label{proof:verrb}
\begin{proof}
    We prove Lemma \ref{le:verrb} via constructing a one-dimensional stochastic problem without any Byzantine nodes, such that any method $\mathsf M \in \mathcal{M}$, equipped with a certain $(\delta_{\rm max},\rho)$-robust aggregator $\mathsf A \in \mathcal A$, will be stuck at \(x^0\) with \(\nabla f(x^0) = 2\epsilon\), as long as the number of oracle queries is insufficient. Recall that the estimated fraction of the Byzantine nodes $\delta$ satisfies $0 < \delta \leq \delta_{\rm max} < 0.5$. For simplicity we assume $x^0 = 0$, but the conclusion remains the same for arbitrary $x^0$.

    In the constructed problem, all nodes have identical functions and gradients, given by
    \begin{align*}
    f_i(x) = f(x) = \frac{L}{2}x^2 + 2\epsilon x = \E_\xi [F(x,\xi) := \frac{L}{2}x^2 + 2\xi x], ~~\forall i = 1,\cdots,n, \\
    \nabla f_i(x) = \nabla f(x) = Lx + 2\epsilon, ~~\forall i = 1,\cdots,n,
    \end{align*}
    where {$x \in \mathbb R$} and $\xi \in \Xi \subset \mathbb R$ is a random vector satisfying  $\E_\xi [\xi] = \epsilon$ and $\E_\xi [\|\xi - \epsilon\|^2] = \sigma^2/4$.
    In any method $\mathsf M \in \mathcal{M}$, each node only has access to the stochastic gradient $\nabla F(x,\xi) = Lx + 2\xi$ from a given oracle. Note that $\|\nabla f(0)\|=2\epsilon > \epsilon$ and $\nabla F(0,\xi_i) = 2\xi_i$. It is easy to verify that Assumptions \ref{ass:basic}, \ref{ass:i} and \ref{ass:u} are all satisfied.

    Given the initial point $x^0 = 0$, if there exists a $(\delta_{\rm max},\rho)$-robust aggregator $\mathsf A \in \mathcal A$ always returning 0, then the output $\tilde{x}$ of any method $\mathsf M \in \mathcal{M}$ is also 0 such that $\|\nabla f(\tilde{x})\|=2\epsilon > \epsilon$. To avoid such an undesired circumstance, the inequality \eqref{eq:agg} in Definition \ref{d:agg} should not hold, or equivalently we must have
    \begin{align}
    \label{eq:31}
    \|0 - \bar w\|^2 > \frac{\rho\delta}{n} \sum_{i=1}^n \|w_i - \bar w\|^2,
    \end{align}
    where $\bar w = \frac{1}{n}\sum_{i=1}^n\sum_{l=1}^m\alpha^l \nabla F(0,\xi^l_i)$ and $w_i = \sum_{l=1}^m\alpha^l \nabla F(0,\xi^l_i)$. Here we omit the iteration index $t$ for simplicity. Taking expectations on both sides of \eqref{eq:31} yields
    \begin{align}
    \label{eq:verrb1}
    \E\left[\left(\frac{1}{n}\sum_{i=1}^n\sum_{l=1}^m\alpha^l \nabla F(0,\xi^l_i)\right)^2\right] &> \E \left[\frac{\rho\delta}{n}\sum_{i=1}^n \left(\sum_{l=1}^m\alpha^l \nabla F(0,\xi^l_i) - \frac{1}{n}\sum_{i=1}^n\sum_{l=1}^m\alpha^l \nabla F(0,\xi^l_i)\right)^2\right] \notag \\
    &= \frac{\rho\delta(n-1)}{n} \E \left[\left(2\sum_{l=1}^m\alpha^l \xi^l_i  - 2\epsilon\sum_{l=1}^m\alpha^l\right)^2\right] \notag \\
    &= \frac{\rho\delta(n-1)\sigma^2}{n}\sum_{l=1}^m (\alpha^l)^2,
    \end{align}
    where the first equality comes from the relation between total variance and sample variance (see Chapter 2.6.1 in \citep{ross2014introduction})
    and the second is due to the independence of $\xi_i^l$.

    For the L.H.S. of \eqref{eq:verrb1}, we have
    \begin{align}
    \label{eq:verrb1l}
    &\quad ~ \mathbb E \left[\left(\frac{1}{n}\sum_{i=1}^n\sum_{l=1}^m \alpha^l \nabla F(0,\xi^l_i)\right)^2\right] \\
    &= \mathbb E \left[\left(\frac{1}{n}\sum_{i=1}^n\sum_{l=1}^m \alpha^l \nabla F(0,\xi^l_i) - \frac{1}{n}\sum_{i=1}^n\sum_{l=1}^m \alpha^l \nabla f(0) + \sum_{l=1}^m\alpha^l \nabla f(0) \right)^2\right] \notag\\
    &= \mathbb E \left[\left(\frac{1}{n}\sum_{i=1}^n\sum_{l=1}^m \alpha^l \nabla F(0,\xi^l_i) - \frac{1}{n}\sum_{i=1}^n\sum_{l=1}^m \alpha^l \nabla f(0)\right)^2\right] + \left( \sum_{l=1}^m\alpha^l \nabla f(0) \right)^2 \notag\\
    &= \frac{1}{n^2} \mathbb E \left[\sum_{i=1}^n\sum_{l=1}^m (\alpha^l)^2 \left(\nabla F(0,\xi^l_i) -  \nabla f(0)\right)^2\right] + \left( \sum_{l=1}^m\alpha^l \nabla f(0) \right)^2 \notag\\
    &\leq \frac{\sigma^2}{n} \sum_{l=1}^m (\alpha^l)^2 + 4\epsilon^2\left( \sum_{l=1}^m\alpha^l \right)^2. \notag
    \end{align}
    Substituting \eqref{eq:verrb1l} into \eqref{eq:verrb1} and reorganizing the terms, we obtain
    \[
    \frac{\left( \sum_{l=1}^m\alpha^l \right)^2}{\sum_{l=1}^m(\alpha^l)^2} > \frac{\rho\delta(n-1)\sigma^2}{4\epsilon^2 n} - \frac{\sigma^2}{4\epsilon^2 n}.
    \]
    Then from the Cauchy-Schwarz inequality, we have
    \[
    m \ge \frac{\left( \sum_{l=1}^m\alpha^l \right)^2}{\sum_{l=1}^m (\alpha^l)^2} > \frac{\rho\delta(n-1)\sigma^2}{4\epsilon^2 n} - \frac{\sigma^2}{4\epsilon^2n} = \Omega \left(\frac{\rho\delta\sigma^2}{\epsilon^2}\right),
    \]
    which implies that for any method $\mathsf M \in \mathcal{M}$, the number of oracle queries must be at least $\Omega (\frac{\rho\delta\sigma^2}{\epsilon^2})$ to obtain a $(0,\epsilon)$-stationary point. This completes the proof.
\end{proof}

\subsection{Proof of Lemma \ref{le:verrb-nc}}\label{proof:verrb-nc}
\begin{proof}
    We prove Lemma \ref{le:verrb-nc} through constructing a $d$-dimensional stochastic problem without any Byzantine nodes, where \(d = \Theta(\epsilon^{-2})\). The function has a chain-like structure, and \(\|\nabla f(x)\| > \epsilon\) if \([x]_d = 0\). Given this problem, any method $\mathsf M \in \mathcal{M}$, equipped with a certain $(\delta_{\rm max},\rho)$-robust aggregator $\mathsf A \in \mathcal A$, must query $\Omega (\frac{\rho\delta\sigma^2}{\epsilon^2})$ times in expectation to identify the next coordinate. Thus, given $x^0 = \mathbf{0}$, the overall oracle query complexity is $\Omega (\frac{\rho\delta\sigma^2}{\epsilon^4})$. Recall that the estimated fraction of the Byzantine nodes $\delta$ satisfies $0 < \delta \leq \delta_{\rm max} < 0.5$. For simp- licity we assume $x^0 = \mathbf{0}$, but the conclusion remains the same for arbitrary $x^0$.

    In the constructed problem, all nodes have identical functions, given by
    \[
    f(x) = f_i(x) = \frac{L\nu^2}{152}h\left(\frac{x}{\nu}\right)~~{\rm with}~~\nu = \frac{152}{L}\cdot 2\epsilon,  ~~\forall i = 1,\cdots,n,
    \]
    Therein, $h : \mathbb R^d \to \mathbb R$ is defined as
    \[
    h(x) := -\Psi(1)\Phi([x]_1)+\sum_{j=2}^d \left[\Psi(-[x]_{j-1})\Phi(-[x]_j)-\Psi([x]_{j-1})\Phi([x]_j)\right]~~{\rm with}~~d=\left\lfloor \frac{L\Delta}{7296\epsilon^2} \right\rfloor,
    \]
    while $\Psi: \mathbb R \to \mathbb R$ and $\Phi: \mathbb R \to \mathbb R$ are defined as
    \[
    \Psi(a) = \left\{
        \begin{aligned}
            &0, &a\leq 1/2,\\
            &\exp{\left(1-\frac{1}{(2a-1)^2}\right)},&a>1/2,
        \end{aligned}
        \right.
    ~~{\rm and}~~
    \Phi(a) = \sqrt{e}\int_{-\infty}^a e^{-\frac{\tau^2}{2}}d\tau.
    \]
    According to \citep{arjevani2023lower}, $f_i$ is $L$-smooth and satisfies Assumption \ref{ass:basic}. In addition, Assumption \ref{ass:i} obviously holds.

    Below, we construct a stochastic gradient oracle $\nabla F(x,\xi)$ that satisfies Assumption \ref{ass:u} for $f(x)$, in the form of
    \begin{align}
    \label{eq:nablaF}
    \nabla F(x,\xi) = \frac{L\nu}{152}\cdot \nabla H\left(\frac{x}{\nu},\xi\right) = 2\epsilon \cdot \nabla H\left(\frac{x}{\nu},\xi\right),
    \end{align}
    where
    \begin{align}
    \label{eq:nablaH}
    \nabla_j H\left(\frac{x}{\nu},\xi\right) := \nabla_j h(\frac{x}{\nu}) \cdot \left(1+\mathds{1}\{j > {\rm prog}_{\frac{1}{2}}(\frac{x}{\nu})\}\left(\frac{\xi}{p}-1\right)\right).
    \end{align}
    Here, $\mathds{1}$ is the indicator function that returns 1 if the argument holds true and 0 otherwise; the random variable {$\xi \sim$ Bernoulli$(p)$}; ${\rm prog}_{\frac{1}{2}}(\frac{x}{\nu}) := \max\{j = 0, 1, \cdots, d \mid |\frac{[x]_j}{\nu}|>\frac{1}{2}\}$ denotes the largest index of $ |\frac{x}{\nu}| $ whose element is larger than $1/2$ -- we additionally define $|\frac{[x]_0}{\nu}| = 1$ so as to return index $0$ if no other elements are qualified. Note that the constant $1/2$ can be replaced by any other constant larger than 0. According to Lemma 3 in \citep{arjevani2023lower}, we can show that such a stochastic gradient oracle satisfies Assumption \ref{ass:u}, as
    \[
    \mathbb E \left[\nabla F\left(x,\xi\right) \right] =  \nabla f(x) \quad \text{and} \quad \mathbb E \left[\|\nabla F\left(x,\xi\right) - \nabla f(x)\|^2\right] \leq \sigma^2,
    \]
    when $\frac{1}{p} = \frac{\sigma^2}{2116\epsilon^2}+1$.

    Recall that $x$ is initialized as \( \mathbf{0}\). Now we construct a \((\delta_{\rm max}, \rho)\)-robust aggregator \(\mathsf{A} \in \mathcal{A}\) with which the coordinates of $x$ are updated sequentially, from $1$ to $d$. When $x = \mathbf{0}$, the stochastic gradient oracle defined in \eqref{eq:nablaF} returns 0 except for the first coordinate. Thus, for any method \(\mathsf M \in \mathcal{M}\), the elements of all $\{w_i\}$ are 0 except for the first coordinate according to \eqref{re:p}. Therefore, if there exists a $(\delta_{\rm max},\rho)$-robust aggregator $\mathsf A \in \mathcal A$ returning 0 for the first coordinate but the true averages for the other coordinates, the aggregated $w$ remain \( \mathbf{0}\). In consequence, the variable $x$ and the output $\tilde{x}$ are also \( \mathbf{0}\). This is an undesired circumstance since \(\|\nabla f(x)\| > \epsilon\) if \([x]_d = 0\). To avoid this circumstance and discover the first coordinate, we must have
    \[
    \|0 - [\bar w]_1\|^2 > \frac{\rho\delta}{n}\sum_{i = 1}^n \|[w_i]_1 - [\bar w]_1\|^2.
    \]
    Likewise, given the currently undiscovered coordinate $j \in [1, \cdots, d]$, such that $[x]_{j'} = 0$ for $j' \geq j$, the stochastic gradient oracle defined in \eqref{eq:nablaF} returns 0 for the coordinates $j' > j$. Thus, for any method \(\mathsf M \in \mathcal{M}\), the elements of all $\{w_i\}$ are 0 for the coordinates $j' > j$ according to \eqref{re:p}. Therefore, if there exists a $(\delta_{\rm max},\rho)$-robust aggregator $\mathsf A \in \mathcal A$ returning 0 for the $j$-th coordinate but the true averages for the coordinates $j' \neq j$, the aggregated $[w]_j$ remain \(0\) for the coordinates $j' \geq j$. In consequence, $[x]_{j'}$ and $[\tilde{x}]_{j'}$ are also \(0\) for the coordinates $j' \geq j$. As discussed before, this is an undesired circumstance since \(\|\nabla f(x)\| > \epsilon\) if \([x]_d = 0\). To avoid this circumstance and discover the $j$-th coordinate, we must have
    \begin{align}
    \label{eq:aa37}
    \|0 - [\bar w]_j\|^2 > \frac{\rho\delta}{n}\sum_{i = 1}^n \|[w_i]_j - [\bar w]_j\|^2, \quad j = 1, \cdots, d.
    \end{align}
    Otherwise, the method \(\mathsf M \in \mathcal{M}\) shall get stuck in the $j$-th coordinate. Readers are referred to Figure \ref{fig:flowchart} for the evolution of the iterates.
    \begin{figure}
        \centering
        \includegraphics[width=1\linewidth]{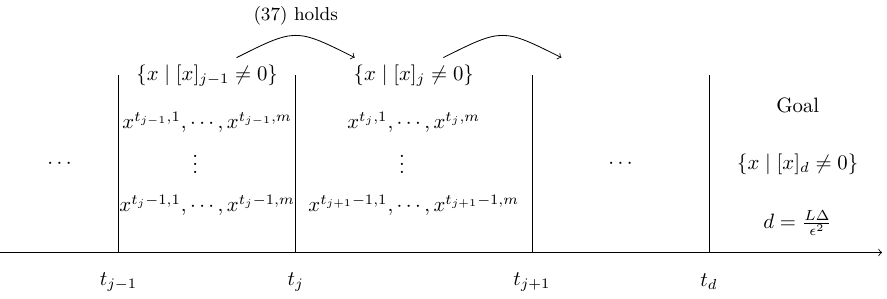}
        \caption{Evolution of the iterates.}
        \label{fig:flowchart}
    \end{figure}

    By definition \([\bar w]_j = \frac{1}{n}\sum_{i=1}^n\sum_{l=1}^m \alpha^l  \nabla_j F(x^l,\xi^l_i)\) and \([w_i]_j = \sum_{l=1}^m \alpha^l \nabla_j F(x^l,\xi^l_i)\) in \eqref{eq:aa37}. Here and thereafter, we omit the iteration index \(t\) for simplicity. Observe that \(\nabla_j F(x^l,\xi^l_i) \neq 0\) only if \([x^l]_{j-1} \neq 0\). We denote the number of the sampled \(\{x^l\}_{l=1}^m\) satisfying \([x^l]_{j-1} \neq 0\) as \(m_j\) and suppose that they are \(\{x^l\}_{l=m-m_j}^m\). Thus, \([\bar w]_j = \frac{1}{n}\sum_{i=1}^n\sum_{l=m-m_j}^m \alpha^l \nabla_j F(x^l,\xi^l_i)\) and \([w_i]_j = \sum_{l=m-m_j}^m \alpha^l \nabla_j F(x^l,\xi^l_i)\). Within the summands, $\alpha_l \geq 0$ for $l = m-m_j, \cdots, m$ but at least one of them is positive. Otherwise, \eqref{eq:aa37} does not hold. Substituting $[\bar w]_j$ and $[w_i]_j$ into \eqref{eq:aa37} and taking expectations, we have
    \begin{align}
    \label{eq:aa38}
    &\mathbb E \left[\left(\frac{1}{n}\sum_{i=1}^n\sum_{l=m-m_j}^m \alpha^l \nabla_j F(x^l,\xi^l_i)\right)^2\right] \\
    > ~&\mathbb E \left[\frac{\rho \delta}{n} \sum_{i=1}^n \left(\sum_{l=m-m_j}^m \alpha^l \nabla_j F(x^l,\xi^l_i) - \frac{1}{n}\sum_{i=1}^n\sum_{l=m-m_j}^m \alpha^l \nabla_j F(x^l,\xi^l_i) \right)^2 \right]. \notag
    \end{align}
    For the L.H.S. of \eqref{eq:aa38}, we have
    \begin{align*}
    &\quad ~\mathbb E \left[\left(\frac{1}{n}\sum_{i=1}^n\sum_{l=m-m_j}^m \alpha^l \nabla_j F(x^l,\xi^l_i)\right)^2\right] \\
    &= \mathbb E \left[\left(\frac{1}{n}\sum_{i=1}^n\sum_{l=m-m_j}^m \alpha^l \nabla_j F(x^l,\xi^l_i) - \frac{1}{n}\sum_{i=1}^n\sum_{l=m-m_j}^m \alpha^l \nabla_j f(x^l) + \sum_{l=m-m_j}^m\alpha^l \nabla_j f(x^l) \right)^2\right] \\
    &= \mathbb E \left[\left(\frac{1}{n}\sum_{i=1}^n\sum_{l=m-m_j}^m \alpha^l \nabla_j F(x^l,\xi^l_i) - \frac{1}{n}\sum_{i=1}^n\sum_{l=m-m_j}^m \alpha^l \nabla_j f(x^l)\right)^2\right] + \left( \sum_{l=m-m_j}^m\alpha^l \nabla_j f(x^l) \right)^2 \\
    &= \frac{1}{n^2} \mathbb E \left[\sum_{i=1}^n\sum_{l=m-m_j}^m (\alpha^l)^2 \left(\nabla_j F(x^l,\xi^l_i) -  \nabla_j f(x^l)\right)^2\right] + \left( \sum_{l=m-m_j}^m\alpha^l \nabla_j f(x^l) \right)^2 \\
    &= \frac{\sum_{l=m-m_j}^m \left( \alpha^l \nabla_j f(x^l) \right)^2}{n}\frac{1-p}{p} + \left( \sum_{l=m-m_j}^m\alpha^l \nabla_j f(x^l) \right)^2,
    \end{align*}
    where the second equality comes from $\E [\nabla F(x,\xi)] = \nabla f(x)$, the third equality is due to the independence of $\xi_i^l$ and the last equality follows from $$
    \E \left[\left(\nabla_j F(x^l,\xi^l_i) -  \nabla_j f(x^l)\right)^2\right] = (\nabla_j f(x^l))^2\frac{\E[(\xi^l_i-p)^2]}{p^2} = (\nabla_j f(x^l))^2\frac{1-p}{p}.
    $$
    For the R.H.S. of \eqref{eq:aa38}, we have
    \begin{align*}
    &\mathbb E \left[\frac{\rho \delta}{n} \sum_{i=1}^n \left(\sum_{l=m-m_j}^m \alpha^l \nabla_j F(x^l,\xi^l_i) - \frac{1}{n}\sum_{i=1}^n\sum_{l=m-m_j}^m \alpha^l \nabla_j F(x^l,\xi^l_i) \right)^2 \right] \\
    = ~ &\frac{\rho \delta (n-1)}{n} \mathbb E \left[ \left(\sum_{l=m-m_j}^m \alpha^l \nabla_j F(x^l,\xi^l_i) - \sum_{l=m-m_j}^m \alpha^l \nabla_j f(x^l) \right)^2 \right] \\
    = ~ &\frac{\rho \delta (n-1)}{n} \mathbb E \left[ \sum_{l=m-m_j}^m (\alpha^l)^2 \left(\nabla_j F(x^l,\xi^l_i) - \nabla_j f(x^l) \right)^2 \right] \\
    = ~ &\frac{\rho\delta (n-1)\sum_{l=m-m_j}^m \left( \alpha^l \nabla_j f(x^l) \right)^2}{n}\frac{1-p}{p},
    \end{align*}
    where the first equality uses the relation between total variance and sample variance and the second is also due to the independence of $\xi_i^l$.

    Therefore, \eqref{eq:aa38} becomes
    \[
    \frac{\left( \sum_{l=m-m_j}^m\alpha^l \nabla_j f(x^l) \right)^2}{\sum_{l=m-m_j}^m \left( \alpha^l \nabla_j f(x^l) \right)^2} > \frac{\rho\delta (n-1)}{n}\frac{1-p}{p} - \frac{1}{n}\frac{1-p}{p}.
    \]
    Since $\frac{1-p}{p} = \frac{\sigma^2}{2116\epsilon^2}$, we have
    \[
    m_j > \frac{\rho\delta\sigma^2}{2116\epsilon^2}-\frac{\rho\delta\sigma^2}{2116\epsilon^2n} - \frac{\sigma^2}{2116\epsilon^2n} = \Omega (\frac{\rho\delta\sigma^2}{\epsilon^2}),~\forall j = 1, \cdots, d.
    \]
    Recall that we need to sequentially update to the $d$-th coordinate with $d=\Omega(L\Delta\epsilon^{-2})$, the overall oracle query complexity is $\sum_{j=1}^d m_j=\Omega(\frac{L\Delta\rho\delta\sigma^2}{\epsilon^4})$.
\end{proof}

\subsection{Proof of Lemma~\ref{prop:gc}}
\label{proof:proposition}
\begin{proof}
We begin with introducing several auxiliary sequences. The first sequence is
\begin{align}
   \bar x^t := x^t + (\sqrt{q} - 1)(x^t - x^{t-1}),~~\forall t = 1, 2, \cdots, \label{eq:defs}
\end{align}
with $\bar x^0=x^0$. According to the definition of $\beta$ in \eqref{beta} and the definition of $\bar x^t$ in \eqref{eq:defs}, we further introduce
\begin{align*}
 y^t := & x^t + \beta (x^t - x^{t-1}) \\
  = & x^t + \frac{\sqrt{q}-1}{\sqrt{q}+1} (x^t - x^{t-1})  \\
  = & x^t + \frac{\sqrt{q}-1}{q-1}\cdot (\sqrt{q}-1) (x^t - x^{t-1})  \\
  = & \frac{\sqrt{q}-1}{q-1} \bar x^t + (1-\frac{\sqrt{q}-1}{q-1})x^t, ~~\forall t = 1, 2, \cdots.
\end{align*}
We also introduce $$z^{t-1} := \frac{x^*}{\sqrt{q}} + (1-\frac{1}{\sqrt{q}}) x^{t-1}, ~~\forall t = 1, 2, \cdots.$$

From the $\frac{L}{\theta}$-strong convexity of $h^t$, we have
\begin{align}
\label{eq:auxhk}
    h^t(z^{t-1})  \geq & h^{t*} + \frac{L}{2\theta}\|z^{t-1} -x^{t*}\|^2 \\
            = & h^{t*} + \frac{L}{2\theta}\|z^{t-1}-x^t\|^2 + \underbrace{\frac{L}{2\theta}\|x^t-x^{t*}\|^2 +\frac{L}{\theta}\langle z^{t-1}-x^t, x^t-x^{t*}\rangle}_{- \Pi^t}. \notag
\end{align}
Therefore, we have
\begin{align}
\label{eq:sufficient_descent}
     \E[f(x^t)] & \leq \E[h^{t*}] + \upsilon^t \\
                & \leq \E[ h^t(z^{t-1})] -\E\left[ \frac{L}{2\theta}\|z^{t-1}-x^t\|^2 \right] + \E[\Pi^t] + \upsilon^t \notag \\
                & \leq \E[f(z^{t-1})] + \E\left[ \frac{L-\mu\theta}{2\theta}\|z^{t-1}-y^{t-1}\|^2 \right] -\E\left[ \frac{L}{2\theta}\|z^{t-1}-x^t\|^2 \right] + \E[\Pi^t] + \upsilon^t, \notag
\end{align}
where the first and last inequalities are respectively due to \ref{ta2} and \ref{ta1} in the hypothesis, while the second inequality comes from taking expectation on \eqref{eq:auxhk}. Note that the expectation is conditioned on all random variables prior to $t-1$.

Now we bound the first three terms at the R.H.S. of \eqref{eq:sufficient_descent}. We have
\begin{align}
\label{eq:relations}
  \E[f(z^{t-1})] & \leq \frac{1}{\sqrt{q}}f^* + (1-\frac{1}{\sqrt{q}}) \E[f(x^{t-1})] - \frac{\mu (\sqrt{q}-1)}{2q}R^{t-1}, \\
  \E[\|z^{t-1} - y^{t-1}\|^2] & \leq \frac{1}{\sqrt{q}}\cdot \frac{R^{t-1}}{q+\sqrt{q}} + \frac{1}{\sqrt{q}}\cdot \frac{\bar R^{t-1}}{\sqrt{q}+1}, \notag \\
  \E[\|z^{t-1} - x^t\|^2] & = \frac{\bar R^t}{q}, \notag
\end{align}
where $R^t = \E[\|x^*- x^t\|^2]$ and $ \bar R^t = \E[\|x^*- \bar x^t\|^2]$.
The first inequality in \eqref{eq:relations} is due to the {strong} convexity of $f$, the last one can be
obtained from the definition of~$\bar x^t$ in~(\ref{eq:defs}) after simple derivation, and the second one can be derived through
\begin{equation*}
   \begin{split}
      \Vert z^{t-1} - y^{t-1} \Vert^2 & = \left\| \frac{x^*-x^{t-1}}{q+\sqrt{q}}+\frac{x^*-\bar x^{t-1}}{\sqrt{q}+1} \right\|^2 \\
      & = \frac{1}{q} \left\| \left(1-\frac{\sqrt{q}}{\sqrt{q}+1}\right)(x^*-x^{t-1})+\frac{\sqrt{q}}{\sqrt{q}+1} (x^*-\bar x^{t-1}) \right\|^2 \\
      & \leq \frac{1}{q} \cdot \left(1-\frac{\sqrt{q}}{\sqrt{q}+1}\right)\Vert x^*-x^{t-1} \Vert^2 +\frac{1}{q} \cdot \frac{\sqrt{q}}{\sqrt{q}+1} \Vert x^*-\bar x^{t-1} \Vert^2\\
      & =  \frac{1}{\sqrt{q}} \cdot \frac{1}{q+\sqrt{q}}  \Vert x^*-x^{t-1} \Vert^2 + \frac{1}{\sqrt{q}} \cdot \frac{1}{\sqrt{q}+1} \Vert x^*-\bar x^{t-1} \Vert^2.
   \end{split}
\end{equation*}

Substituting \eqref{eq:relations} into \eqref{eq:sufficient_descent} and using the fact that $q=\frac{L}{\theta \mu}$, we have
\begin{align}
\label{eq:37}
\E[f(x^t)-f^*] + \frac{\mu}{2}\bar R^t   \leq & (1-\frac{1}{\sqrt{q}}) \E[f(x^{t-1})-f^*] - \frac{\mu (\sqrt{q}-1)}{2q}R^{t-1} \\
& + \frac{L-\mu\theta}{2\theta}
   \frac{1}{\sqrt{q}}\cdot \frac{R^{t-1}}{q+\sqrt{q}} + \frac{L-\mu\theta}{2\theta}\frac{1}{\sqrt{q}}\cdot \frac{\bar R^{t-1}}{\sqrt{q}+1}
      + \E[\Pi^t] +  {\upsilon^t} \notag \\
     = & (1-\frac{1}{\sqrt{q}}) \E[f(x^{t-1})-f^*] + \frac{L-\mu\theta}{2\theta}\frac{1}{\sqrt{q}}\cdot \frac{\bar R^{t-1}}{\sqrt{q}+1}  + \E[\Pi^t] +  {\upsilon^t}. \notag
\end{align}
Further defining a Lyapunov function
\begin{equation}
   S^t = (1-\frac{1}{\sqrt{q}}) \E[f(x^t)-f^*] + \frac{L-\mu\theta}{2\theta}\frac{1}{\sqrt{q}}\cdot \frac{\bar R^{t}}{\sqrt{q}+1}, \quad  {t = 1, 2, \cdots,} \label{eq:Sk}
\end{equation}
from \eqref{eq:37} we have
 \begin{equation}
   (1-\frac{1}{\sqrt{q}})^{-1}S^t \leq S^{t-1}
        + \E[\Pi^t] + {\upsilon^t}.\label{eq:lyapunov}
\end{equation}

For the term $\E[\Pi^t]$ in \eqref{eq:lyapunov}, we know that
\begin{align}
\label{eq:pi}
   \Pi^t & = - \frac{L}{2\theta}\|x^t-x^{t*}\|^2  - \frac{L}{\theta}\langle z^{t-1} - x^t, x^t - x^{t*} \rangle \\
                 & = - \frac{L}{2\theta}\|x^t-x^{t*}\|^2  -  \frac{L}{\theta\sqrt{q}} \langle x^* - \bar x^t, x^t - x^{t*} \rangle \notag \\
                 & \leq - \frac{L}{2\theta}\|x^t-x^{t*}\|^2  +  \frac{L}{\theta\sqrt{q}} \|x^*-\bar x^t\| \| x^t - x^{t*}\| \notag \\
       & \leq   \left(2\sqrt{q}-1\right)\frac{L}{2\theta}\|x^t-x^{t*}\|^2  + \frac{ L}{4\theta q\sqrt{q}}\|x^*-\bar x^t\|^2   ~~\text{(using Young's inequality)} \notag \\
       & \leq   \left(2\sqrt{q}-1\right)(h^t(x^t)-h^{t*})  + \frac{ L}{4\theta q\sqrt{q}}\|x^*-\bar x^t\|^2   ~~\text{(using $2\sqrt{q} \geq 1$ and strong convexity)} \notag \\
       & =   \left(2\sqrt{q}-1\right)(h^t(x^t)-h^{t*})  + \frac{L-\mu\theta}{4\theta\sqrt{q}(q-1)}\|x^*-\bar x^t\|^2. \notag
   \end{align}
Taking expectation on \eqref{eq:pi}, noticing that the quadratic term involving $\|x^*-\bar x^t\|^2$ is smaller than $S^t/2(\sqrt{q}-1)$ in expectation (from the definition of $S^t$ in~(\ref{eq:Sk})) and using (iii) in the hypothesis, we obtain
\begin{align}
\label{eq:42}
\E[\Pi^t] \leq \left(2\sqrt{q}-1\right)\varepsilon^t + \frac{S^t}{2(\sqrt{q}-1)}.
\end{align}

Substituting \eqref{eq:42} into \eqref{eq:lyapunov}, we have
\begin{align*}
S^t \leq \frac{2\sqrt{q}-2}{2\sqrt{q}-1}\left(S^{t-1}   + \upsilon^t + \left(2\sqrt{q}-1\right)\varepsilon^t\right).
\end{align*}
Unrolling the recursion yields
\begin{align}
\label{eq:43}
S^t & \leq \left(\frac{2\sqrt{q}-2}{2\sqrt{q}-1}\right)^t \left
  (S^0 + \sum_{\tau=1}^t \left(\frac{2\sqrt{q}-1}{2\sqrt{q}-2}\right)^{\tau-1}\left( \upsilon^\tau - \varepsilon^\tau + 2\sqrt{q}\varepsilon^\tau \right)
  \right)\\
    & = \left(1-\frac{1}{2\sqrt{q}-1}\right)^t S^0 + \sum_{\tau=1}^t \left(1-\frac{1}{2\sqrt{q}-1}\right)^{t-\tau+1}\left( \upsilon^\tau - \varepsilon^\tau +  2\sqrt{q}\varepsilon^\tau\right) \notag \\
    & \leq (1-\frac{1}{\sqrt{q}}) \left(2\left(1-\frac{1}{2\sqrt{q}}\right)^t(f(x^0)-f^*) + 2\sum_{\tau=1}^t \left(1-\frac{1}{2\sqrt{q}}\right)^{t-\tau}\left( \upsilon^\tau - \varepsilon^\tau +  2\sqrt{q}\varepsilon^\tau\right)  \right) \notag \\
    & \leq (1-\frac{1}{\sqrt{q}}) \left(2\left(1-\frac{1}{2\sqrt{q}}\right)^t(f(x^0)-f^*) + 4\sum_{\tau=1}^t \left(1-\frac{1}{2\sqrt{q}}\right)^{t-\tau}\left( \upsilon^\tau +  \sqrt{q}\varepsilon^\tau\right)  \right), \notag
\end{align}
where the second inequality uses the fact that $1-\frac{1}{2\sqrt{q}-1} \leq 2(1-\frac{1}{\sqrt{q}})$ and
\begin{equation*}
\begin{split}
    S^0 & = (1-\frac{1}{\sqrt{q}})(f(x^0)-f^*) + \frac{L-\mu\theta}{2\theta\sqrt{q}(\sqrt{q}+1)}\|x^0-x^*\|^2 \\
        & = (1-\frac{1}{\sqrt{q}})(f(x^0)-f^*) + \frac{\mu}{2}(1-\frac{1}{\sqrt{q}})\|x^0-x^*\|^2 \\
        & \leq 2(1-\frac{1}{\sqrt{q}})(f(x^0)-f^*).
\end{split}
\end{equation*}

From the definition of $S^t$ in~(\ref{eq:Sk}), we have $(1-\frac{1}{\sqrt{q}})\E[f(x^t)-f^*] \leq S^t$. Combining this inequality with \eqref{eq:43}, we obtain \eqref{convergence-guarantee} and complete the proof of Lemma~\ref{prop:gc}.
\end{proof}

\subsection{Bound of aggregation bias \(\E[\|\Delta_1^t\|^2]\)} \label{proof:v}
\begin{lemma} \label{le:v} Consider Algorithm \ref{subalgo} with the step size $\eta=\frac{1}{L}$. If Assumptions \ref{ass:i} and \ref{ass:u} hold, while $\alpha \in [0,1]$, $\beta \in [0,1)$ and $\theta \in (0,1]$ satisfy
\begin{align}\label{alphabeta}
    &\chi_0\beta^2 = (1-\alpha)\beta(\theta+\beta), \\
    &\chi_1 \geq \frac{1}{1-\chi_0\beta^2}\frac{\alpha\theta^2}{1-\beta^2}\geq 0, \notag\\
    &\chi_2 \geq \frac{(1-\alpha)\theta(\theta+\beta)}{1-\chi_0\beta^2} \geq 0, \notag\\
    &\chi_3 \geq \frac{1}{1-\chi_0\beta^2}\frac{\alpha\theta(\theta+\beta)}{1-\beta(\theta+\beta)}\geq 0, \notag\\
    &\chi_4 \geq 2\alpha\beta^2(\chi_1+\chi_2+\frac{\alpha\beta^2m}{(1-\beta^2)m_0}) + \frac{2\alpha\theta^2}{1-\beta^2} + (1-\alpha)\theta^2, \notag\\
    &\chi_5 \geq 2\alpha\beta^2(\chi_2+\chi_3+1) + \frac{2\alpha\theta(\theta+\beta)}{1-\beta(\theta+\beta)} + (1-\alpha)\theta^2, \notag
\end{align}
with some $\chi_1$, $\chi_2$, $\chi_3$, $\chi_4$, $\chi_5 \geq 0$, then for any robust aggregator $\mathsf A \in \mathcal A$, we have
    \begin{align}\label{re:s-bs}
    \E[ \|\Delta_1^t\|^2] &\leq \frac{1}{L^2}\left(\frac{3\chi_4\rho\delta\sigma^2}{m}\left(1+\frac{1}{(1-\delta)n}\right)+3\chi_5\rho\delta\zeta^2\right).
    \end{align}
\end{lemma}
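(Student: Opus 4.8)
The plan is to recast $\Delta_1^t$ as the increment of a contractive recursion whose driving terms are two robust-aggregation discrepancies, and then to control those discrepancies using Definition~\ref{d:agg} together with Assumptions~\ref{ass:i}--\ref{ass:u}. Write $e^t := \bar s^t - \hat s^t$, so that, since $\eta = 1/L$, $\Delta_1^t = \frac1L(e^t - \beta e^{t-1})$. Averaging \eqref{update:s} over $\mathcal H$ gives $\bar s^t = \beta\bar s^{t-1} + \theta\bar g^{t-1}$ with $\bar g^{t-1} := \frac1{|\mathcal H|}\sum_{i\in\mathcal H}g_i^{t-1}$; subtracting \eqref{hat-s-t} yields $\bar s^t - s^t = \beta e^{t-1} + \theta(\bar g^{t-1} - \mathsf A(\{g_i^{t-1}\}))$, and combining this with $e^t = (1-\alpha)(\bar s^t - s^t) + \alpha(\bar s^t - \mathsf A(\{s_i^t\}))$ (immediate from \eqref{ts+hs}) gives
\be
\Delta_1^t = \frac1L\Bigl(-\alpha\beta\,e^{t-1} + (1-\alpha)\theta\bigl(\bar g^{t-1} - \mathsf A(\{g_i^{t-1}\})\bigr) + \alpha\bigl(\bar s^t - \mathsf A(\{s_i^t\})\bigr)\Bigr).
\ee
Applying $\|a+b+c\|^2 \le 3(\|a\|^2+\|b\|^2+\|c\|^2)$ --- the source of the factor $3$ in \eqref{re:s-bs} --- reduces the task to bounding $\E\|e^{t-1}\|^2$, $\E\|\bar g^{t-1}-\mathsf A(\{g_i^{t-1}\})\|^2$ and $\E\|\bar s^t-\mathsf A(\{s_i^t\})\|^2$.

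Next I would bound the two aggregation discrepancies via Definition~\ref{d:agg}: $\|\bar g^{t-1}-\mathsf A(\{g_i^{t-1}\})\|^2 \le \frac{\rho\delta}{|\mathcal H|}\sum_{i\in\mathcal H}\|g_i^{t-1}-\bar g^{t-1}\|^2$, and similarly for $s_i^t$. For the $g$-term I decompose $g_i^{t-1}-\bar g^{t-1}$ into the per-node mini-batch noise $n_i^{t-1} := g_i^{t-1}-\nabla f_i(y^{t-1})$ (second moment $\le\sigma^2/m$ by Assumption~\ref{ass:u}, since $g_i^{t-1}$ averages $m$ samples), the honest-average noise $\bar n^{t-1} := \bar g^{t-1}-\nabla f(y^{t-1})$ (second moment $\le\sigma^2/(m|\mathcal H|)\le\sigma^2/(m(1-\delta)n)$), and the heterogeneity $\nabla f_i(y^{t-1})-\nabla f(y^{t-1})$ ($\ell_2$-averaging to $\le\zeta^2$ by Assumption~\ref{ass:i}); this triangle-inequality split is exactly what produces the $1+\frac1{(1-\delta)n}$ factor. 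For the $s$-term I unroll $s_i^t-\bar s^t = \beta^t(s_i^0-\bar s^0) + \theta\sum_{\tau=1}^t\beta^{t-\tau}(g_i^{\tau-1}-\bar g^{\tau-1})$ and average over $i$: in expectation the stochastic cross-terms between distinct iterations vanish by the cross-iteration independence in Assumption~\ref{ass:u}, so the noise contribution sums to a geometric series with ratio $\beta^2$ (generating the $\frac1{1-\beta^2}$ and $\frac{\theta^2}{1-\beta^2}$ coefficients, plus the $m/m_0$ ratio from the $m_0$-sized initial batch $s_i^0$), while the heterogeneity contribution is handled by the weighted power-mean inequality $\|\sum_\tau w_\tau a_\tau\|^2 \le (\sum_\tau w_\tau)(\sum_\tau w_\tau\|a_\tau\|^2)$ with $w_\tau = \beta^{t-\tau}\theta$ and the extra weight $\beta^t$ on the $s^0$ term, each $\ell_2$-averaged $\|a_\tau\|^2 \le \zeta^2$, which generates the $\frac1{1-\beta(\theta+\beta)}$-type coefficients.

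It then remains to handle $\E\|e^{t-1}\|^2$, which satisfies its own recursion $e^t = (1-\alpha)\beta\,e^{t-1} + (1-\alpha)\theta(\bar g^{t-1}-\mathsf A(\{g_i^{t-1}\})) + \alpha(\bar s^t-\mathsf A(\{s_i^t\}))$ with contraction factor $(1-\alpha)\beta < 1$; unrolling it and inserting the previous bounds writes $\E\|e^{t-1}\|^2$ as a combination of $\frac{\rho\delta\sigma^2}{m}$- and $\rho\delta\zeta^2$-type terms. The substitution $\chi_0\beta^2 = (1-\alpha)\beta(\theta+\beta)$ is precisely the bookkeeping device that collapses the nested geometric sums --- the $\beta^t$-decaying coupling among $e^t$, the node-level momentum variance $\frac1{|\mathcal H|}\sum\|s_i^t-\bar s^t\|^2$, and $\|\bar s^t-\mathsf A(\{s_i^t\})\|^2$ --- into the single factor $1/(1-\chi_0\beta^2)$, while the hypotheses $\chi_1\ge\frac{1}{1-\chi_0\beta^2}\frac{\alpha\theta^2}{1-\beta^2}$, $\chi_2\ge\frac{(1-\alpha)\theta(\theta+\beta)}{1-\chi_0\beta^2}$, $\chi_3\ge\frac{1}{1-\chi_0\beta^2}\frac{\alpha\theta(\theta+\beta)}{1-\beta(\theta+\beta)}$ keep the intermediate denominators positive and bound the partial sums. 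Collecting coefficients, everything multiplying $\frac{\rho\delta\sigma^2}{m}(1+\frac1{(1-\delta)n})$ is dominated by $2\alpha\beta^2(\chi_1+\chi_2+\frac{\alpha\beta^2 m}{(1-\beta^2)m_0})+\frac{2\alpha\theta^2}{1-\beta^2}+(1-\alpha)\theta^2 \le \chi_4$ and everything multiplying $\rho\delta\zeta^2$ by $2\alpha\beta^2(\chi_2+\chi_3+1)+\frac{2\alpha\theta(\theta+\beta)}{1-\beta(\theta+\beta)}+(1-\alpha)\theta^2 \le \chi_5$; together with the factor $3$ and $|\mathcal H|\ge(1-\delta)n$ this yields \eqref{re:s-bs}. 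As a sanity check, when $\alpha=0$ the identity above collapses to $\Delta_1^t = \frac\theta L(\bar g^{t-1}-\mathsf A(\{g_i^{t-1}\}))$ and the smallest admissible choice is $\chi_4=\chi_5=\theta^2$, matching Corollary~\ref{rm:sc}.

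I expect the main obstacle to be the genuinely coupled, doubly-nested geometric recursion in the last step: the server aggregation error $e^t$, the node-level momentum variance, and the aggregator discrepancy on the $s_i^t$ all feed back into one another through the momentum weights $\beta$ and $\theta$. Unrolling them simultaneously while (i) keeping the stochastic cross-terms (which vanish by independence) separated from the heterogeneity cross-terms (which must survive a Cauchy--Schwarz / power-mean step), and (ii) matching every resulting constant to the engineered quantities $\chi_0,\dots,\chi_5$ --- in particular arranging the $\chi_0$ substitution so the recursion closes cleanly --- is where essentially all the effort will go.
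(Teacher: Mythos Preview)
Your plan is essentially the paper's own: decompose $\Delta_1^t$ into the error $e^{t-1}=\bar s^{t-1}-\hat s^{t-1}$ and the two aggregation discrepancies, bound the latter via Definition~\ref{d:agg} by the node-level variances of $g_i$ and $s_i$, split those variances into stochastic and heterogeneity pieces, and unroll the contractive recursion for $e^t$ with the $\chi_0\beta^2=(1-\alpha)\beta(\theta+\beta)$ substitution.

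One detail to correct: the factor $3$ in \eqref{re:s-bs} does \emph{not} come from your top-level $\|a+b+c\|^2\le 3(\cdots)$ split of $\Delta_1^t$; it arises from the three-way decomposition of the node-level deviations, namely $s_i^t-\bar s^t=(s_i^t-\E s_i^t)+(\E s_i^t-\E\bar s^t)+(\E\bar s^t-\bar s^t)$ (and the analogous split for $g_i-\bar g$). At the top level the paper instead writes
\[
L\,\Delta_1^t=\alpha\bigl(\beta\hat s^{t-1}+\theta\bar g^{t-1}-\mathsf A(\{s_i^t\})\bigr)+(1-\alpha)\,\theta\bigl(\bar g^{t-1}-\mathsf A(\{g_i^{t-1}\})\bigr),
\]
applies Jensen with weights $\alpha,1-\alpha$, and then a two-term Young inequality on the first piece. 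This is exactly what produces the coefficients $2\alpha\beta^2$, $2\alpha$, and $(1-\alpha)\theta^2$ appearing in the $\chi_4,\chi_5$ conditions; your three-term split at the top would instead give $3\alpha^2\beta^2$, $3\alpha^2$, $3(1-\alpha)^2\theta^2$, which for $\alpha>2/3$ do not fit under the stated $\chi_4$ (and would double-count the $3$ once you also perform the inner three-way split). Adjusting this one step, the rest of your outline goes through verbatim.
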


\begin{proof}
    Using Young's inequality and {$\eta=\frac{1}{L}$}, we have
    \begin{align}
    \label{delta1}
       & \|\Delta_1^t\|^2 = \frac{1}{L^2}\left\|\beta (\hat s^{t-1}-\bar s^{t-1}) +  (\bar s^t - \hat s^t)\right\|^2\\
       & = \frac{1}{L^2} \left\|\alpha\beta \hat s^{t-1}+\frac{\theta}{|\mathcal H|}\sum_{i \in \mathcal H}  g_i^{t-1}-\alpha {\mathsf A}(\{s_i^t\}_{i=1}^n)-(1-\alpha)\theta{\mathsf A}(\{g_i^{t-1}\}_{i=1}^n)\right\|^2 \notag \\
       & \leq \frac{\alpha}{L^2} \left\|\beta \hat s^{t-1}+\frac{\theta}{|\mathcal H|}\sum_{i \in \mathcal H}  g_i^{t-1}- {\mathsf A}(\{s_i^t\}_{i=1}^n)\right\|^2 +\frac{(1-\alpha)\theta^2}{L^2}\left\|{\mathsf A}(\{g_i^{t-1}\}_{i=1}^n) - \frac{1}{|\mathcal H|}\sum_{i \in \mathcal H}  g_i^{t-1}\right\|^2 \notag \\
       & \leq \frac{2\alpha\beta ^2}{L^2} \left\|\hat s^{t-1} - \bar s^{t-1}\right\|^2+\frac{2\alpha}{L^2}\left\|\bar s^t- {\mathsf A}(\{s_i^t\}_{i=1}^n)\right\|^2 + \frac{(1-\alpha)\theta^2}{L^2}\left\|{\mathsf A}(\{g_i^{t-1}\}_{i=1}^n) - \frac{1}{|\mathcal H|}\sum_{i \in \mathcal H}  g_i^{t-1} \right\|^2. \notag
    \end{align}
    Taking expectation on the first term, we have
    \begin{align}
    \label{s-bars}
        \E [\left\|\hat s^{t} - \bar s^{t}\right\|^2] &= \E [\|- \alpha(\bar s^t - {\mathsf A}(\{s_i^t\}_{i=1}^n))+(1-\alpha)( s^t - \bar s^t)\|^2]\\
        & \leq \E [\alpha\| \bar s^t - {\mathsf A}(\{s_i^t\}_{i=1}^n)\|^2+(1-\alpha)\| s^t - \bar s^t\|^2] \notag \\
        & \leq \E [\alpha\| \bar s^t - {\mathsf A}(\{s_i^t\}_{i=1}^n)\|^2+(1-\alpha)\beta(\theta+\beta) \|\hat s^{t-1} - \bar s^{t-1}\|^2 \notag \\
        &\quad +(1-\alpha)\theta(\theta+\beta)\|{\mathsf A}(\{g_i^{t-1}\}_{i=1}^n) - \frac{1}{|\mathcal H|}\sum_{i \in \mathcal H}  g_i^{t-1}\|^2] \notag.
    \end{align}

    We first consider the second term at the R.H.S. of \eqref{delta1}, which is also the first term at the R.H.S. of \eqref{s-bars}. From Definition \ref{d:agg} of the $(\delta_{\rm max},\rho)$-robust aggregator $\mathsf A$, we have
    \begin{align}
    \E [\left\| \bar s^{t} - {\mathsf A}(\{s_i^t\}_{i=1}^n)\right\|^2] \leq \frac{\rho \delta}{|\mathcal H|} \sum_{i \in \mathcal H} \E[\left\|\bar s^t - s_i^t\right\|^2]. \label{tildes-bars}
    \end{align}
    We proceed to bound $\frac{1}{|\mathcal H|}\sum_{i \in \mathcal H}\E[\|\bar s^t - s^t_i\|^2]$. Letting $\E_{\xi^{[t]}}:= \E_{\xi^t,\xi^{t-1},\cdots,\xi^0}$, from the definition of $s_i^t$ in \eqref{update:s}, we obtain
    \begin{align*}
        \E_{\xi^{[t-1]}} [\|s^t_i - \E_{\xi^{[t-1]}}[s_i^t]\|^2] &= \E_{\xi^{[t-2]}}\E_{\xi^{t-1}} [\|\beta  (s^{t-1}_i - \E_{\xi^{[t-2]}}[s^{t-1}_i])  + \theta( g_i^{t-1}-\nabla f_i(y^{t-1})\|^2] \\
        &\leq \E_{\xi^{[t-2]}} [\|\beta  (s^{t-1}_i - \E_{\xi^{[t-2]}}[s^{t-1}_i])\|^2] + \frac{\theta^2}{m}\sigma^2,
    \end{align*}
    where the cross term $\E_{\xi^{t-1}}[\beta\theta\langle s^{t-1}_i - \E_{\xi^{[t-2]}}[s^{t-1}_i],  g_i^{t-1}-\nabla f_i(y^{t-1}\rangle] = 0$ due to the unbiasedness of $ g_i^{t-1}$. Unrolling the above recursion yields
    \[
    \E [\|s^t_i - \E[s_i^t]\|^2] \leq \frac{\theta^2}{(1-\beta^2)m} \sigma^2 + \beta^{2t} \E[\|s_i^0-\E[s_i^0]\|^2]\leq \frac{\theta^2}{(1-\beta^2)m} \sigma^2 + \frac{\beta^{2t}}{m_0}\sigma^2,
    \]
    where the last inequality is due to \(s_i^0 = \frac{1}{m_0}\sum_{l=1}^{m_0} \nabla F(y^0,\xi_i^{(0,l)}).\) Hence, for $\bar s^t = \frac{1}{|\mathcal H|} \sum_{i \in \mathcal H} s_i^t$, we have
    \begin{align*}
        \E[\|\bar s^t - \E[\bar s^t]\|^2] &\le \frac{\theta^2}{(1-\beta^2)(1-\delta)nm} \sigma^2 + \frac{\beta^{2t}}{(1-\delta)n} \E[\|s_i^0-\E[s_i^0]\|^2]\\
        &\le \frac{\theta^2}{(1-\beta^2)(1-\delta)nm} \sigma^2 + \frac{\beta^{2t}}{(1-\delta)nm_0} \sigma^2.
    \end{align*}
    Then for $\E [\|\E[\bar s^t] - \E[s_i^t]\|^2]$, it holds that
    \begin{align*}
        \frac{1}{|\mathcal H|}\sum_{i \in \mathcal H}[\left\|\E[\bar s^t] - \E[s_i^t]\right\|^2] &= \frac{1}{|\mathcal H|}\sum_{i \in \mathcal H}[\|\beta(\E[\bar s^{t-1}]-\E[s_i^{t-1}])+\theta(\nabla f(y^{t-1}) - \nabla f_i(y^{t-1}))\|^2] \\
        &\leq \frac{\beta(\theta+\beta)}{|\mathcal H|}\sum_{i \in \mathcal H}[\|(\E[\bar s^{t-1}]-\E[s_i^{t-1}])\|^2] + \theta(\theta+\beta)\zeta^2.
    \end{align*}
    Unrolling the above recursion yields
    \begin{align*}
    \frac{1}{|\mathcal H|}\sum_{i \in \mathcal H}[\left\|\E[\bar s^t] - \E[s_i^t]\right\|^2] &\leq \frac{\theta(\theta+\beta)}{1-\beta(\theta+\beta)}\zeta^2+ \frac{\beta^t(\theta+\beta)^t}{|\mathcal H|}\sum_{i \in \mathcal H}[\|(\E[\bar s^0]-\E[s_i^0])\|^2]\\
    &\leq \frac{\theta(\theta+\beta)}{1-\beta(\theta+\beta)}\zeta^2+ \beta^t(\theta+\beta)^t\zeta^2.
    \end{align*}
    Therefore, we have
    \begin{align} \label{si-bars}
        \frac{1}{|\mathcal H|}\sum_{i \in \mathcal H}\E[\|\bar s^t - s_i^t\|^2] &\le \frac{3}{|\mathcal H|}\sum_{i \in \mathcal H} \E[\|s_i^t - \E[s_i^t]\|^2] + \frac{3}{|\mathcal H|}\sum_{i \in \mathcal H} \E[\|\E[\bar s^t] - \E[s_i^t]\|^2] + 3 \E[\|\bar s^t - \E[\bar s^t]\|^2] \notag \\
        &\le \frac{3}{m}\left(1+\frac{1}{(1-\delta)n}\right)\frac{\theta^2}{1-\beta^2}\sigma^2+3\frac{\theta(\theta+\beta)}{1-\beta(\theta+\beta)}\zeta^2 \notag\\
        &\quad ~+ \frac{3\beta^{2t}}{(1-\delta)nm_0} \sigma^2 + 3\beta^t(\theta+\beta)^t\zeta^2.
    \end{align}

    Next, consider the third term at the R.H.S. of \eqref{delta1}, which is also the third term at the R.H.S. of \eqref{s-bars}. Using Definition \ref{d:agg}, we have
    \begin{align}
    \label{eq:aa55}
    &~\E[\|{\mathsf A}(\{ g_i^{t-1}\}) - \frac{1}{|\mathcal H|}\sum_{i \in \mathcal H}  g_i^{t-1}\|^2] \leq \frac{\rho \delta}{|\mathcal H|}\sum_{i \in \mathcal H} \E[\| g_i^{t-1} - \frac{1}{|\mathcal H|}\sum_{i \in \mathcal H}  g_i^{t-1}\|^2]\\
    \leq &~\frac{\rho \delta}{|\mathcal H|}\sum_{i \in \mathcal H} \E[\| g_i^{t-1} - \nabla f_i(y^{t-1}) + \nabla f_i(y^{t-1}) - \nabla f(y^{t-1}) + \nabla f(y^{t-1}) - \frac{1}{|\mathcal H|}\sum_{i \in \mathcal H}  g_i^{t-1}\|^2] \notag \\
    \leq &~\frac{3\rho\delta\sigma^2}{m}\left(1+\frac{1}{(1-\delta)n}\right)+3\rho\delta\zeta^2. \notag
    \end{align}

    Substituting \eqref{tildes-bars}--\eqref{eq:aa55} into \eqref{s-bars} yields
    \begin{align}
    &\quad~\E [\left\|\hat s^{t} - \bar s^{t}\right\|^2] \\
    &\leq (1-\alpha)\beta (\theta+\beta) \E[\|\hat s^{t-1} - \bar s^{t-1}\|^2] \notag  \\
    &\quad + \frac{3\rho\delta\sigma^2}{m}\left(1+\frac{1}{(1-\delta)n}\right)\left(\frac{\alpha\theta^2}{1-\beta^2}+(1-\alpha)\theta(\theta+\beta)+\frac{\alpha \beta^{2t}m}{m_0}\right)\notag \\
    &\quad +3\rho\delta\zeta^2\left(\frac{\alpha\theta(\theta+\beta)}{1-\beta(\theta+\beta)}+(1-\alpha)\theta(\theta+\beta)+\alpha\beta^t(\theta+\beta)^tm\right)\notag \\
    &\leq \frac{3\rho\delta\sigma^2}{m}\left(1+\frac{1}{(1-\delta)n}\right)\left(\frac{1}{1-(1-\alpha)\beta(\theta+\beta)}\frac{\alpha\theta^2}{1-\beta^2}+\frac{(1-\alpha)\theta(\theta+\beta)}{1-(1-\alpha)\beta(\theta+\beta)}+\frac{\alpha\beta^2m}{(1-\beta^2)m_0}\right)\notag \\
    &\quad +3\rho\delta\zeta^2\left(\frac{1}{1-(1-\alpha)\beta(\theta+\beta)}\frac{\alpha\theta(\theta+\beta)}{1-\beta(\theta+\beta)}+\frac{(1-\alpha)\theta(\theta+\beta)}{1-(1-\alpha)\beta(\theta+\beta)}+1\right)\notag \\
    &\leq \frac{3\rho\delta\sigma^2}{m}\left(1+\frac{1}{(1-\delta)n}\right)\left(\chi_1+\chi_2+\frac{\alpha\beta^2m}{(1-\beta^2)m_0}\right)+3\rho\delta\zeta^2\left(\chi_3+\chi_2+1\right). \notag
    \label{ress}
    \end{align}
    Here, the second inequality uses
    \begin{align*}
    \sum_{\tau=1}^{t-1} (1-\alpha)^\tau\beta^\tau(\theta+\beta)^\tau \leq \frac{1}{1-(1-\alpha)\beta(\theta+\beta)}\,,
    \end{align*}
    \begin{align*}
        \sum_{\tau=1}^{t} \beta^{2(t-\tau)}(1-\alpha)^\tau\beta^\tau(\theta+\beta)^\tau &\leq \beta^{2t} \sum_{\tau=1}^{t} (1-\alpha)^\tau\beta^{-\tau}(\theta+\beta)^\tau \\&= \beta^{2t} \sum_{\tau=1}^{t} \chi_0^{\tau}
        \leq \beta^{2t} \sum_{\tau=1}^{t} \beta^{-2\tau}\leq \frac{\beta^2}{1-\beta^2}\,,
    \end{align*}
    and
    \begin{align*}
        \sum_{\tau=1}^{t} \beta^{t-\tau}(\theta+\beta)^{t-\tau}(1-\alpha)^\tau\beta^\tau(\theta+\beta)^\tau &\leq \beta^{t}(\theta+\beta)^t \sum_{\tau=1}^{t} (1-\alpha)^\tau \leq \frac{1}{\alpha}.
    \end{align*}
    Substituting \eqref{tildes-bars}-\eqref{ress} into \eqref{delta1} and thanks to \eqref{alphabeta}, we obtain the desired result.
\end{proof}

\subsection{Bound of stochastic bias \(\E[\|\Delta_2^t\|^2]\)}\label{proof:Delta2}
\begin{lemma}\label{le:Delta2}
    Consider Algorithm \ref{subalgo} with the step size $\eta=\frac{1}{L}$. If Assumption \ref{ass:u} holds, then we have
    \begin{align}
    \E[\|\Delta_2^t\|^2] \leq \frac{\theta^2\sigma^2}{L^2|\mathcal H|m}.
    \end{align}
\end{lemma}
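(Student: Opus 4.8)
The plan is to unwind the definition of $\Delta_2^t$ from the decomposition \eqref{detailupdate}, substitute the prescribed step size $\eta=\frac1L$, and recognize the resulting expression as a scaled empirical average of i.i.d.\ zero-mean noise terms. Concretely, with $\eta=\frac1L$ we have
\[
\Delta_2^t \;=\; \frac{\theta}{L}\Bigl(\nabla f(y^{t-1}) - \tfrac{1}{|\mathcal H|m}\textstyle\sum_{i\in\mathcal H}\sum_{l=1}^m \nabla F(y^{t-1};\xi_i^{(t-1,l)})\Bigr)
\;=\; \frac{\theta}{L\,|\mathcal H|\,m}\sum_{i\in\mathcal H}\sum_{l=1}^m \Bigl(\nabla f_i(y^{t-1}) - \nabla F(y^{t-1};\xi_i^{(t-1,l)})\Bigr),
\]
where I used $\nabla f(y^{t-1})=\frac{1}{|\mathcal H|}\sum_{i\in\mathcal H}\nabla f_i(y^{t-1})$ from \eqref{prob-general}. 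So $\Delta_2^t$ is $\frac{\theta}{L}$ times the average of $|\mathcal H|m$ noise vectors $e_i^{(l)}:=\nabla f_i(y^{t-1})-\nabla F(y^{t-1};\xi_i^{(t-1,l)})$.

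The next step is a conditioning argument: $y^{t-1}$ is computed at iteration $t-1$ from $x^{t-1},x^{t-2}$, hence is a function only of the oracle samples queried up to and including iteration $t-1$'s \emph{preceding} iterations, so it is measurable with respect to the $\sigma$-algebra generated by all randomness strictly prior to the batch $\{\xi_i^{(t-1,l)}\}_{i\in\mathcal H,\,l\in[m]}$. Conditioned on that $\sigma$-algebra, $y^{t-1}$ is deterministic, and by Assumption \ref{ass:u} the variables $\{\xi_i^{(t-1,l)}\}$ are mutually independent across $i\in\mathcal H$ and $l\in[m]$ (independence across nodes/iterations is assumed, and the intra-batch samples are drawn independently per the description of $\mathcal M$), each giving an unbiased stochastic gradient, so $\E[e_i^{(l)}\mid\cdot]=0$ and $\E[\|e_i^{(l)}\|^2\mid\cdot]\le\sigma^2$.

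Expanding the squared Euclidean norm of the sum, all cross terms $\E[\langle e_i^{(l)},e_{i'}^{(l')}\rangle\mid\cdot]$ with $(i,l)\neq(i',l')$ vanish by independence and zero mean, leaving only the $|\mathcal H|m$ diagonal terms, each at most $\sigma^2$; dividing by $(|\mathcal H|m)^2$ and multiplying by $\theta^2/L^2$ gives $\E[\|\Delta_2^t\|^2\mid\cdot]\le \frac{\theta^2\sigma^2}{L^2|\mathcal H|m}$, and the tower property yields the unconditional bound. There is no real obstacle here — the one point requiring mild care is the bookkeeping of the filtration so that $y^{t-1}$ is genuinely independent of the fresh batch $\{\xi_i^{(t-1,l)}\}$; once that is pinned down, the estimate is the standard ``variance of a mean of $|\mathcal H|m$ i.i.d.\ terms'' computation.
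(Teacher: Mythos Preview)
Your proposal is correct and follows essentially the same route as the paper's own proof: substitute $\eta=\frac{1}{L}$ into the definition of $\Delta_2^t$, rewrite $\nabla f(y^{t-1})$ as the average of the $\nabla f_i(y^{t-1})$, use independence and unbiasedness of the $|\mathcal H|m$ noise terms to kill the cross terms, and bound each diagonal term by $\sigma^2$. Your treatment of the filtration (making explicit that $y^{t-1}$ is measurable with respect to the randomness prior to the fresh batch $\{\xi_i^{(t-1,l)}\}$) is, if anything, a bit more careful than the paper, which simply passes from the squared norm of the sum to the sum of squared norms without spelling out the conditioning.
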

\begin{proof} {It follows from $\eta=\frac{1}{L}$ that}
    \begin{equation*}
        \begin{aligned}
            \E[\|\Delta_2^t\|^2] =& \frac{1}{\tilde L^2}\E\left[\left\|\nabla f(y^{t-1}) - \frac{1}{|\mathcal H|m}\sum_{i \in \mathcal H}\sum_{l=1}^m\nabla F(y^{t-1};\xi^{(t-1,l)}_i) \right\|^2\right]\\
            =& \frac{1}{\tilde L^2}\E\left[\left\|\frac{1}{|\mathcal H|}\sum_{i \in \mathcal H}\nabla f_i(y^{t-1}) - \frac{1}{|\mathcal H|m}\sum_{i \in \mathcal H}\sum_{j=1}^m\nabla F(y^{t-1};\xi^{(t-1,l)}_i) \right\|^2\right]\\
            =& \frac{1}{\tilde L^2|\mathcal H|^2m^2}\sum_{i \in \mathcal H}\sum_{j=1}^m\E\left[\left\|\nabla f_i(y^{t-1}) - \nabla F(y^{t-1};\xi^{(t-1,l)}_i) \right\|^2\right] \leq  \frac{\sigma^2}{\tilde L^2|\mathcal H|m},
        \end{aligned}
    \end{equation*}
    which completes the proof.
\end{proof}

\subsection{Proof of Theorem \ref{thm:sc-restart}}\label{proof:sc-restart}
\begin{proof}
For notational convenience, in this proof we define \(z_p = z(p)\), \(\epsilon_p = \epsilon(p)\), \(T_p = T(p)\), and \(m_p = m(p)\) for all \(p \geq 1\). We consider two cases, \(\sigma^2 = 0\) and \(\sigma^2 \neq 0\).

The proof sketch is as follows: Algorithm \ref{algo:restart} performs $P$ calls of Algorithm \ref{subalgo} to achieve $\E[f(z_P) - f^*] \leq \epsilon_P^2 + O(\rho \delta \zeta^2) \leq \epsilon^2/2L + O(\rho \delta \zeta^2)$. In the \(p\)-th call, Algorithm \ref{subalgo} starts from a point $z_{p-1}$ satisfying $\E[f(z_{p-1}) - f^*] \leq \epsilon_{p-1}^2 + O(\rho \delta \zeta^2)$ and then generates a new point $z_p$ satisfying $\E[f(z_p) - f^*] \leq \epsilon_{p}^2 + O(\rho \delta \zeta^2)$ with $\epsilon_{p}^2 = \epsilon_{p-1}^2/2$, ensuring the desired convergence.

To be specific, in the first call, we set
\begin{align}\label{eps1}
\epsilon_1^2 = \frac{32}{\mu}\left(3 \rho \delta (1+\frac{1}{(1 - \delta)n})+\frac{1}{(1 - \delta)n}\right)\sigma^2.
\end{align}
We run Algorithm \ref{subalgo} with $m_1=1$ for $T_1$ iterations, where
\begin{align}\label{t1m1}
T_1 = \min \left\{\lceil 2\sqrt{\kappa}\log \frac{2LR^2}{\epsilon_1^2} \rceil, \lceil 2\sqrt{\kappa}\log \frac{4L^2R^2}{\epsilon^2} \rceil \right\}.
\end{align}

When \(\epsilon_1^2 \leq \frac{\epsilon^2}{2L}\), we have
\[
T_1 = \lceil 2\sqrt{\kappa}\log \frac{4L^2R^2}{\epsilon^2} \rceil ~~{\rm and}~~ P = \max \left\{\left\lceil \log_2 \frac{4L\epsilon_1^2}{\epsilon^2} \right\rceil, 1\right\}=1.
\]
Therefore, by Theorem \ref{thm:sc} with $\alpha = 0$ and $\theta = 1$, we obtain
\[
\E[\|\nabla f(z_1)\|] \leq \E[\sqrt{2L(f(z_1)-f^*)}] \leq 2\sqrt{6}\kappa^{1/2}\rho^{1/2}\delta^{1/2}\zeta + \epsilon.
\]
It means that \( z_1 \) is the desired \( \tilde{x}^K \) that satisfies \eqref{nverr-rs}, and the required oracle query complexity is \( m_1 T_1 \).

Otherwise, when \(\epsilon_1^2 > \frac{\epsilon^2}{2L}\), by Theorem \ref{thm:sc} with $\alpha = 0$ and $\theta = 1$, we have $\chi_4=\chi_5=1$ and hence $\E[f(z_1) - f^*] \leq \epsilon_1^2 + 48\mu^{-1} \rho \delta \zeta^2 $.
Now we set $\epsilon_2^2 = \epsilon_1^2/2$ and increase the batch size to $m_2 = 2m_1$. Then using Theorem \ref{thm:sc} again, we have
\begin{align*}
\E[f(z_2) - f^*] &\leq 2\left(1 - \frac{1}{2\sqrt{\kappa}}\right)^{T_2}\E[f(z_1)-f^*]\\
&\quad ~+16\mu^{-1}\left(\frac{3\rho\delta\sigma^2}{{m_2}}(1+\frac{1}{(1-\delta)n})+\frac{\sigma^2}{(1-\delta)n{m_2}}+3\rho\delta\zeta^2\right)\\
&\leq2\left(1 - \frac{1}{2\sqrt{\kappa}}\right)^{T_2}(\epsilon_1^2 + 48\mu^{-1}\rho \delta \zeta^2)+\frac{\epsilon_2^2}{2} + 48\mu^{-1}\rho \delta \zeta^2.
\end{align*}
With $T_2 = \lceil 2\sqrt{\kappa}\log 8 \rceil$, we have $\E[f(z_2) - f^*] \leq \epsilon_2^2 + 48(1+ {4}^{-1})\mu^{-1}\rho \delta \zeta^2$. Similarly, in the $p$-th call, by setting $\epsilon_p^2 = \epsilon_{p-1}^2/2$, $m_p = 2m_{p-1}=2^{p-1}$ and $T_p = \lceil 2\sqrt{\kappa}\log 8 \rceil$, we have
\begin{align}\label{f:redisna}
\E[f(z_p) - f^*] \leq \epsilon_p^2 + 48\sum_{j=1}^p 4^{1-p}\mu^{-1}\rho \delta \zeta^2 \leq \epsilon_p^2 + 64\mu^{-1}\rho \delta \zeta^2.
\end{align}
Letting $p=P$ in \eqref{f:redisna}, we have
\begin{align*}
    \E[\|\nabla f(z_P)\|] &\leq \E[\sqrt{2L(f(z_P)-f^*)}]\\
    &\leq \sqrt{128\kappa\rho\delta\zeta^2 + \epsilon^2}\\
    &\leq 8\sqrt{2}\kappa^{1/2}\rho^{1/2}\delta^{1/2}\zeta + \epsilon,
\end{align*}
Moreover, the oracle query complexity of the $p$-th ($p\ge2$) call is upper bounded by $m_pT_p = 2^{p-1}\lceil 2\sqrt{\kappa}\log 8 \rceil$, which results in the overall oracle query complexity
\[
O\left(2\sqrt{\kappa}\log \frac{4L^2R^2}{\epsilon^2} + \sum_{p=2}^P 2^{p-1}\lceil 2\sqrt{\kappa}\log 8 \rceil\right) ~~~{\rm with}~~~ P = \left\lceil \log_2 \frac{4L\epsilon_1^2}{\epsilon^2} \right\rceil
\]
to reach $\epsilon_P^2 \leq \epsilon^2/2L$ thanks to $\epsilon^2 < \epsilon_1^2L$.
With elementary calculations, the overall oracle query complexity can be rewritten as
\begin{align}\label{complexity:redisna}
O\left(2\sqrt{\kappa}\log \frac{4L^2R^2}{\epsilon^2} + 128\kappa^{3/2}\left(3 \rho \delta (1+\frac{1}{(1 - \delta)n})+\frac{1}{(1 - \delta)n}\right)\frac{\sigma^2}{\epsilon^2}\right),
\end{align}
which completes the proof.
\end{proof}

\subsection{Proof of Theorem~\ref{thm:nc1}}
\label{section:Theorem 16}
To prove Theorem \ref{thm:nc1}, we summarize the update rules in Algorithm \ref{subalgo} as
\begin{align*}
    \hat s^t&= \alpha {\mathsf A}(\{\beta s_i^{t-1}+\theta g_i^{t-1}\}_{i=1}^n) + (1-\alpha)\beta s^{t-1} + (1-\alpha)\theta {\mathsf A}(\{g_i^{t-1}\}_{i=1}^n),\\
    x^t &= x^{t-1} - \eta \hat s^t,\\
    y^t &= x^t + \beta(x^t-x^{t-1}),
\end{align*}
which further indicate that
\begin{align*}
    y^t &= x^t + \beta(x^t-x^{t-1}) = x^t - \eta \beta \hat s^t\\
    & = x^{t-1} - \eta \hat s^t - \eta \beta \hat s^t\\
    & = x^{t-1} - \eta \beta \hat s^{t-1} + \eta \beta \hat s^{t-1} - \eta \hat s^t - \eta \beta \hat s^t\\
    & = y^{t-1} - \eta ((1+\beta)\hat s^t - \beta \hat s^{t-1}) = y^{t-1} - \eta p^t,
\end{align*}
with $p^t:=( 1+\beta)\hat s^t - \beta \hat s^{t-1}$.
Before proving Theorem \ref{thm:nc1}, we give two auxiliary lemmas.
    \begin{lemma}\label{lem:sgdm-byz-descent}
        Consider Algorithm \ref{subalgo} and suppose that Assumption \ref{ass:basic} holds. Setting {$\beta \in [0,1]$} and $\eta \leq \frac{1}{L}$, for any $t\geq 1$ we have
        \begin{align}\label{sgdm-byz-descent}
            \E[f( y^t)] \leq f( y^{t-1}) - \frac{\eta}{2}\|\nabla f( y^{t-1})\|^2 + \eta\E[\|\bar e^t\|^2] + \eta\E[\| p^t - \bar p^t\|^2]\,.
        \end{align}
        where $\bar e^t:=\bar p^t - \nabla f( y^{t-1}) $ and $\bar p^t := (1+\beta)\bar s^t - \beta \bar s^{t-1}$.
    \end{lemma}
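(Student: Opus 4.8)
\textbf{Proof proposal for Lemma~\ref{lem:sgdm-byz-descent}.}

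The plan is to start from the $L$-smoothness descent inequality applied along the trajectory of the auxiliary sequence $y^t$. Since $y^t = y^{t-1} - \eta p^t$, Assumption~\ref{ass:basic} gives
\begin{equation*}
f(y^t) \le f(y^{t-1}) - \eta \langle \nabla f(y^{t-1}), p^t \rangle + \frac{L\eta^2}{2}\|p^t\|^2.
\end{equation*}
First I would split $p^t = \bar p^t + (p^t - \bar p^t)$ inside the inner product and inside the norm, so that the ``honest average'' part $\bar p^t$ is isolated from the aggregation-bias part $p^t - \bar p^t$. The honest-average part is, in turn, rewritten as $\bar p^t = \nabla f(y^{t-1}) + \bar e^t$, which introduces the error term $\bar e^t = \bar p^t - \nabla f(y^{t-1})$ that is named in the statement.

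Next I would take the conditional expectation with respect to the sampling at iteration $t$. The key observation that makes the inner product collapse is that $\bar s^t$ is a weighted sum of honest mini-batch stochastic gradients evaluated at past auxiliary points; expanding the recursion, the ``fresh'' stochastic component in $\bar p^t$ is an unbiased estimate of the corresponding true gradient average, so cross terms of the form $\E[\langle \nabla f(y^{t-1}), \text{zero-mean noise}\rangle]$ vanish. This leaves $\E[\langle \nabla f(y^{t-1}), \bar p^t\rangle]$ controllable. I would then use Young's inequality in the form $-\langle \nabla f(y^{t-1}), \bar e^t\rangle \le \tfrac12\|\nabla f(y^{t-1})\|^2 + \tfrac12\|\bar e^t\|^2$ and, for the cross term with $p^t - \bar p^t$, another Young step, combined with the requirement $\eta \le 1/L$ to absorb the $\frac{L\eta^2}{2}\|p^t\|^2$ curvature term into the first-order terms. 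Collecting coefficients and using $\eta \le 1/L$ once more to replace $L\eta^2$ by $\eta$ where needed should produce exactly the stated bound with the coefficient $\frac{\eta}{2}$ on $\|\nabla f(y^{t-1})\|^2$ and coefficient $\eta$ on the two error terms.

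The main obstacle I anticipate is bookkeeping the constants so that the negative gradient term retains the coefficient $\eta/2$ rather than something smaller: one must be careful that the Young's inequality splitting of $-\langle \nabla f(y^{t-1}), \bar e^t\rangle$ uses weight $1$ (not a tunable $c$) so that exactly $\tfrac12\|\nabla f(y^{t-1})\|^2$ is subtracted from the $-\eta\|\nabla f(y^{t-1})\|^2$ coming from $-\eta\langle \nabla f(y^{t-1}), \nabla f(y^{t-1})\rangle$, and that the remaining curvature contribution $\frac{L\eta^2}{2}\|p^t\|^2 \le \frac{\eta}{2}\|p^t\|^2$ is further split via $\|p^t\|^2 \le 2\|\bar p^t\|^2 + 2\|p^t - \bar p^t\|^2 \le 2(\|\nabla f(y^{t-1})\| + \|\bar e^t\|)^2 + 2\|p^t-\bar p^t\|^2$ without reintroducing a gradient term that would spoil the $\eta/2$ coefficient. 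This is why the precise condition $\eta \le 1/L$ (as opposed to a mere $\eta = O(1/L)$) is stated. The decomposition $p^t = (1+\beta)\hat s^t - \beta \hat s^{t-1}$ versus $\bar p^t = (1+\beta)\bar s^t - \beta \bar s^{t-1}$ also needs to be handled so that $p^t - \bar p^t$ is genuinely an aggregation-bias quantity (a linear combination of $\hat s^\tau - \bar s^\tau$ terms), which is what the later lemmas bounding $\E[\|\Delta_1^t\|^2]$ are designed to control.
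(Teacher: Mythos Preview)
Your high-level plan (start from the $L$-smoothness descent inequality along $y^t = y^{t-1} - \eta p^t$, split $p^t$ into the honest-average part $\bar p^t$ and the aggregation-bias part $p^t - \bar p^t$, and then bound error terms) is the right starting point. But the execution you sketch has a genuine gap in the constant tracking, and the ``zero-mean noise'' observation is a red herring.

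Concretely: neither $\bar e^t = \bar p^t - \nabla f(y^{t-1})$ nor $p^t - \bar p^t$ has zero conditional mean. The momentum structure of $\bar s^t$ means $\E[\bar p^t]$ is a weighted combination of \emph{past} true gradients, not $\nabla f(y^{t-1})$; and $p^t - \bar p^t$ involves the robust aggregator output, which is biased by construction. So you cannot drop any cross term in expectation. That leaves you, after your Young step on $-\langle \nabla f(y^{t-1}),\bar e^t\rangle$, still holding the cross term $-\eta\langle \nabla f(y^{t-1}), p^t-\bar p^t\rangle$ \emph{and} the curvature term $\tfrac{\eta}{2}\|p^t\|^2$; any further Young step on the former, or the expansion $\|p^t\|^2 \le 2(\|\nabla f(y^{t-1})\|+\|\bar e^t\|)^2 + 2\|p^t-\bar p^t\|^2$ you propose for the latter, reintroduces a positive $\|\nabla f(y^{t-1})\|^2$ term and destroys the $\eta/2$ coefficient you need. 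You correctly flag this as the obstacle, but you do not actually resolve it.

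The paper avoids this entirely by \emph{completing the square before splitting}. After using $\eta\le 1/L$ to replace $\tfrac{L\eta^2}{2}\|p^t\|^2$ by $\tfrac{\eta}{2}\|p^t\|^2$, one writes the exact identity
\[
-\eta\langle \nabla f(y^{t-1}), p^t\rangle + \tfrac{\eta}{2}\|p^t\|^2 \;=\; \tfrac{\eta}{2}\|p^t - \nabla f(y^{t-1})\|^2 - \tfrac{\eta}{2}\|\nabla f(y^{t-1})\|^2.
\]
Only \emph{then} does one split $p^t - \nabla f(y^{t-1}) = (p^t-\bar p^t) + \bar e^t$ and apply $\|a+b\|^2 \le 2\|a\|^2 + 2\|b\|^2$. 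This yields $\tfrac{\eta}{2}\cdot 2 = \eta$ on each error term and leaves the $-\tfrac{\eta}{2}\|\nabla f(y^{t-1})\|^2$ untouched, giving exactly the stated bound with no expectation tricks and no leftover cross terms. The fix to your approach is just this one algebraic step.
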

    \begin{proof}
        By the smoothness of the function $f$ and the server update, we have
        \begin{align*}
            f( y^t) & \leq f( y^{t-1}) - \eta \langle \nabla f( y^{t-1}), p^t\rangle + \frac{L \eta^2}{2} \| p^t\|^2                                   \\
                     & \leq f( y^{t-1}) - \eta \langle \nabla f( y^{t-1}), p^t\rangle + \frac{\eta}{2} \| p^t\|^2                                   \\
                     & = f( y^{t-1}) + \frac{\eta}{2} \| p^t - \nabla f( y^{t-1})\|^2 - \frac{\eta}{2}\|\nabla f( y^{t-1})\|^2               \\
                     & = f( y^{t-1}) + \frac{\eta}{2} \| p^t - \bar p^t + \bar p^t - \nabla f( y^{t-1}) \|^2 - \frac{\eta}{2}\|\nabla f( y^{t-1})\|^2 \\
                     & \leq f( y^{t-1}) + \eta\|\bar e^t\|^2 + \eta \| p^t - \bar p^t\|^2 - \frac{\eta}{2}\|\nabla f( y^{t-1})\|^2.
        \end{align*}
        Taking conditional expectations on both sides yields \eqref{sgdm-byz-descent}.
    \end{proof}

    \begin{lemma}\label{lem:sgdm-byz-error}
        Consider Algorithm \ref{subalgo} and suppose that Assumptions \ref{ass:basic}--\ref{ass:u} hold. Setting $\beta$, $\theta$ and $\eta$ such that $\beta = 1- 12L\eta \geq \frac{1}{2}$, \(0\leq \theta \leq 2\) and $(1-\theta-\beta)^2 \leq \chi_6 (1-\beta)^2$, for any $t \geq 2$ we have
        \begin{align}\label{sgdm-byz-error}
            \E \|\bar e^{t}\|^2 \leq & \beta\E\|\bar e^{t-1}\|^2 + \frac{3(1-\beta)}{8} \|p^{t-1} - \bar p^{t-1}\|^2  \\
                                & +(12\chi_6+\frac{3}{8})(1-\beta)\|\nabla f( y^{t-2})\|^2 + \frac{5\theta^2 \sigma^2}{(1-\delta)nm}. \notag
        \end{align}
    \end{lemma}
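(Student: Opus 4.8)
The plan is to track the recursion satisfied by the averaged error $\bar e^t = \bar p^t - \nabla f(y^{t-1})$, where $\bar p^t = (1+\beta)\bar s^t - \beta\bar s^{t-1}$ and $\bar s^t = \frac{1}{|\mathcal H|}\sum_{i\in\mathcal H} s_i^t$. First I would expand $\bar s^t = \beta\bar s^{t-1} + \theta\bar g^{t-1}$, where $\bar g^{t-1} = \frac{1}{|\mathcal H|}\sum_{i\in\mathcal H} g_i^{t-1}$, so that $\bar p^t$ becomes a fixed linear combination of $\bar s^{t-1}$, $\bar s^{t-2}$ and $\bar g^{t-1}$. Substituting this and the analogous expression for $\bar p^{t-1}$, and inserting $\pm\nabla f(y^{t-2})$ and $\pm\frac{1}{|\mathcal H|}\sum_{i\in\mathcal H}\nabla f_i(y^{t-1})$ appropriately, one writes $\bar e^t$ as $\beta\bar e^{t-1}$ plus a stochastic-noise term $\theta(\bar g^{t-1} - \nabla f(y^{t-1}))$ (scaled by an $O(1)$ coefficient coming from the $1+\beta$ weighting), plus a ``drift'' term that measures $\nabla f(y^{t-1}) - \nabla f(y^{t-2})$ and a residual $(1-\theta-\beta)$-type term. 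This is the standard momentum-variance-reduction decomposition (cf.\ \citep{liu2020improved,karimireddy2021learning}), and the identity $\beta = 1-12L\eta$ is chosen precisely so that $1-\beta = 12L\eta$ is small.

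Next I would take conditional expectations and bound each piece. The cross term between $\beta\bar e^{t-1}$ and the fresh noise $\theta(\bar g^{t-1} - \nabla f(y^{t-1}))$ vanishes by unbiasedness of the mini-batch gradients (Assumption~\ref{ass:u}) and independence across iterations. The noise term contributes at most $\theta^2\sigma^2/((1-\delta)nm)$ in expectation, since averaging over the $\ge(1-\delta)n$ honest nodes and over the mini-batch of size $m$ reduces the variance by that factor; tracking the $O(1)$ multiplicative constants from the $(1+\beta)$ weighting and from completing the square yields the stated coefficient $5$. The drift term $\|\nabla f(y^{t-1}) - \nabla f(y^{t-2})\|^2$ is controlled using $L$-smoothness and $y^{t-1} - y^{t-2} = -\eta p^{t-1}$, giving $L^2\eta^2\|p^{t-1}\|^2$; splitting $p^{t-1} = \bar p^{t-1} + (p^{t-1}-\bar p^{t-1})$ and $\bar p^{t-1} = \bar e^{t-1} + \nabla f(y^{t-2})$, this feeds a small multiple of $L^2\eta^2\|\nabla f(y^{t-2})\|^2$, of $L^2\eta^2\|\bar e^{t-1}\|^2$, and of $L^2\eta^2\|p^{t-1}-\bar p^{t-1}\|^2$. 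The residual term is handled by the hypothesis $(1-\theta-\beta)^2 \le \chi_6(1-\beta)^2$, which converts it into a $\chi_6(1-\beta)^2$-sized contribution to the gradient-norm and error terms.

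Finally I would collect coefficients. Using $1-\beta = 12L\eta$ so that $L^2\eta^2 = (1-\beta)^2/144 \le (1-\beta)/288$ (valid since $\beta\ge\frac12$ forces $1-\beta\le\frac12$), each $L^2\eta^2$-weighted term becomes an explicit fraction of $(1-\beta)$; the Young's-inequality constants can then be tuned so that the $\|\bar e^{t-1}\|^2$ coefficient collapses to exactly $\beta$ (the small $O((1-\beta)^2)$ self-term being absorbed), the $\|p^{t-1}-\bar p^{t-1}\|^2$ coefficient is at most $\tfrac{3}{8}(1-\beta)$, and the $\|\nabla f(y^{t-2})\|^2$ coefficient is at most $(12\chi_6 + \tfrac38)(1-\beta)$, matching \eqref{sgdm-byz-error}. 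The main obstacle is purely bookkeeping: one must choose the Young's-inequality weights and the two-sided bound $\frac12 \le \beta = 1-12L\eta$ so that the accumulated $O((1-\beta)^2)$ error terms are dominated and fit inside the clean coefficients $\beta$, $\tfrac{3}{8}(1-\beta)$, $(12\chi_6+\tfrac38)(1-\beta)$ — there is no conceptual difficulty, but the constants are tight and leave little slack.
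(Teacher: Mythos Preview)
Your plan follows the paper's argument almost exactly, but you miss one piece of the decomposition. When you unroll $\bar p^t$ via $\bar s^t = \beta\bar s^{t-1} + \theta\bar g^{t-1}$ and then subtract $\beta\bar p^{t-1}$, the identity that actually emerges is
\[
\bar p^t - \beta\bar p^{t-1} \;=\; \theta\bigl((1+\beta)\bar g^{t-1} - \beta\bar g^{t-2}\bigr),
\]
so $\bar e^t$ decomposes into $\beta\bar e^{t-1}$, a \emph{fresh} noise term $\theta(1+\beta)(\bar g^{t-1}-\nabla f(y^{t-1}))$, an \emph{old} noise term $-\theta\beta(\bar g^{t-2}-\nabla f(y^{t-2}))$, the drift $-\beta(1-\theta)(\nabla f(y^{t-1})-\nabla f(y^{t-2}))$, and the residual $(\theta+\beta-1)\nabla f(y^{t-1})$. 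Your sketch mentions only the fresh noise and relies on its cross term with $\bar e^{t-1}$ vanishing; that is indeed correct for the fresh term, but the old noise $\bar g^{t-2}-\nabla f(y^{t-2})$ is \emph{not} conditionally independent of $\bar e^{t-1}$ (both contain $\bar g^{t-2}$), so that cross term does not vanish. The paper handles this by separating $\bar e^{t-1}$ from the remaining terms via Young's inequality with weight $(1+\tfrac{1-\beta}{2})$ rather than by independence, and the stated noise constant $5$ is exactly $(1+\beta)^2+\beta^2\le 5$, i.e.\ the sum of the two noise contributions, not a single $(1+\beta)^2$ plus a ``completing-the-square'' remainder.

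Once you add the $\bar g^{t-2}$ term and use Young (not independence) to peel off $\beta\bar e^{t-1}$, the rest of your bookkeeping---smoothness to turn the drift into $L^2\eta^2\|p^{t-1}\|^2$, the split $p^{t-1}=\bar p^{t-1}+(p^{t-1}-\bar p^{t-1})$ and $\bar p^{t-1}=\bar e^{t-1}+\nabla f(y^{t-2})$, the use of $(1-\theta-\beta)^2\le\chi_6(1-\beta)^2$, and the absorption of $O((1-\beta)^2)$ self-terms via $1-\beta=12L\eta$---goes through exactly as in the paper.
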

    \begin{proof}
        According to \eqref{update:s} and the definition of $\bar p^t := (1+\beta)\bar s^t - \beta \bar s^{t-1}$, we have
        \begin{align*}
            \bar p^t - \beta \bar p^{t-1} &= (1+\beta)\bar s^t - \beta \bar s^{t-1} - \beta \bar p^{t-1}\\
            & = \beta \bar s^t + \theta \bar g^{t-1} - \beta \bar p^{t-1}\\
            & = \theta \bar g^{t-1} + \beta (\beta \bar s^{t-1} + \theta \bar g^{t-1}) - \beta \bar p^{t-1}\\
            & = \theta \bar g^{t-1} + \beta (\beta \bar s^{t-1} + \theta \bar g^{t-1}) - \beta (\beta \bar s^{t-1} + \theta \bar g^{t-2})\\
            & = \theta \left(\bar g^{t-1} + \beta(\bar g^{t-1} - \bar g^{t-2}) \right),
        \end{align*}
        where \(\bar g^t = \frac{1}{|\mathcal H|}\sum_{i \in \mathcal H}g_i^t\). Therefore, according to the definition of $\bar s$, we have
        \begin{align*}
            \E \|\bar e^{t}\|^2 & =  \E \|\bar p^t - \nabla f( y^{t-1})\|^2                                                                                \\
            & = \E \|\beta \bar p^{t-1} + \theta \left( \bar g^{t-1} + \beta(\bar g^{t-1} - \bar g^{t-2}) \right) - \nabla f( y^{t-1})\|^2                                                       \\
            & = \E \|\beta (\bar p^{t-1} - \nabla f( y^{t-2})) + \theta(1+\beta)(\bar g^{t-1} - \nabla f( y^{t-1})) - \theta\beta (\bar g^{t-2} - \nabla f( y^{t-2}) )\\
            &\quad~ -\beta(1-\theta)(\nabla f( y^{t-1}) - \nabla f( y^{t-2})) + (\theta+\beta-1)\nabla f( y^{t-1}) \|^2
            \\
            & \leq \beta^2 (1+\frac{1-\beta}{2}) \| \bar p^{t-1} - \nabla f( y^{t-2})\|^2 + \theta^2(1+\beta)^2\|\bar g^{t-1} - \nabla f( y^{t-1})\|^2\\
            &\quad~ + \theta^2\beta^2\|\bar g^{t-2} - \nabla f( y^{t-2})\|^2+ 2\beta^2(1-\theta)^2(1+\frac{2}{1-\beta})\|\nabla f( y^{t-1}) - \nabla f( y^{t-2})\|^2 \\
            &\quad~+2(1-\theta-\beta)^2(1+\frac{2}{1-\beta})\|\nabla f( y^{t-1}) \pm \nabla f( y^{t-2})\|^2
            \\
            & \leq \beta^2 (1+\frac{1-\beta}{2}) \| \bar p^{t-1} - \nabla f( y^{t-2})\|^2 + \theta^2(1+\beta)^2\|\bar g^{t-1} - \nabla f( y^{t-1})\|^2\\
            &\quad~ + \theta^2\beta^2\|\bar g^{t-2} - \nabla f( y^{t-2})\|^2+\frac{12}{1-\beta}(1-\theta-\beta)^2\|\nabla f( y^{t-2})\|^2 \\
            &\quad~+ \frac{6}{1-\beta}(\beta^2(1-\theta)^2+2(1-\theta-\beta)^2)\|\nabla f( y^{t-1}) - \nabla f( y^{t-2})\|^2
            \\
            & \leq \beta^2 (1+\frac{1-\beta}{2})\E \|\bar e^{t-1}\|^2 + \frac{5\theta^2 \sigma^2}{(1-\delta)nm}+\frac{12}{1-\beta}(1-\theta-\beta)^2\|\nabla f( y^{t-2})\|^2
            \\
            &\quad ~ + \frac{6L^2\eta^2}{1-\beta}(\beta^2(1-\theta)^2+2(1-\theta-\beta)^2)\|p^{t-1} \pm \bar p^{t-1} \pm \nabla f( y^{t-2})\|^2\\
            & \leq \beta^2 (1+\frac{1-\beta}{2})\E \|\bar e^{t-1}\|^2 + \frac{5\theta^2 \sigma^2}{(1-\delta)nm}+\frac{12}{1-\beta}(1-\theta-\beta)^2\|\nabla f( y^{t-2})\|^2
            \\
            &\quad ~ + \frac{6L^2\eta^2}{1-\beta}(\beta^2(1-\theta)^2+2(1-\theta-\beta)^2)(3\|p^{t-1} - \bar p^{t-1}\|^2 + 3\| \bar e^{t-1}\|^2 + 3\|\nabla f( y^{t-2})\|^2)\\
            & \leq \beta^2\left(\frac{3-\beta}{2}+\frac{54L^2\eta^2}{1-\beta}\right)\E \|\bar e^{t-1} \|^2+ \frac{5\theta^2 \sigma^2}{(1-\delta)nm}
            + \frac{54L^2\eta^2}{1-\beta}\|p^{t-1} - \bar p^{t-1}\|^2
            \\
            &\quad ~ +\frac{6}{1-\beta}\left(2\chi_6 (1-\beta)^2+9L^2\eta^2\right)\|\nabla f( y^{t-2})\|^2
            \\
            & \leq \beta\E\|\bar e^{t-1}\|^2 + \frac{3(1-\beta)}{8} \|p^{t-1} - \bar p^{t-1}\|^2    +(12\chi_6+\frac{3}{8})(1-\beta)\|\nabla f( y^{t-2})\|^2
              + \frac{5\theta^2 \sigma^2}{(1-\delta)nm}\,.
        \end{align*}
        Here, the second to last inequality uses
        \[
        (1-\theta)^2 \leq 1,~~(1-\theta-\beta)^2 \leq \min\{\beta^2,\chi_6 (1-\beta)^2\},~~\frac{1}{2}\leq \beta \leq 1.
        \]
        The last inequality is due to \(\beta = 1 - 12 L\eta\). This completes the proof.
    \end{proof}

\begin{proof}{\bf of Theorem \ref{thm:nc1}}
    Multiplying \eqref{sgdm-byz-error} by \(\frac{\eta}{1-\beta}\) and adding it to \eqref{sgdm-byz-descent} yield
    \begin{align}
    \label{lyapunov}
    \E[f( y^t)] + \frac{\eta\beta}{1-\beta}\E[\|\bar e^t\|^2] &\leq f( y^{t-1}) - \frac{\eta}{2}\|\nabla f( y^{t-1})\|^2 + \eta\E[\| p^t - \bar p^t\|^2] \\
    &\quad ~ + \frac{\eta\beta}{1-\beta}\E[\|\bar e^{t-1}\|^2] + \frac{3\eta}{8} \E[\|p^{t-1} - \bar p^{t-1}\|^2 ] \notag \\
    &\quad ~ +(12\chi_6+\frac{3}{8})\eta\|\nabla f( y^{t-2})\|^2
              + \frac{5\theta^2\eta}{1-\beta} \frac{\sigma^2}{(1-\delta)nm}. \notag
    \end{align}
    For the term of \(\E[\|p^t-\bar p^t\|^2]\) in \eqref{lyapunov}, applying \eqref{ress}, we have
    \begin{align}
    \label{eq:aa65}
        \E[\|p^t-\bar p^t\|^2] &= \E[\|(1+\beta)\hat s^t - \beta \hat s^{t-1} - (1+\beta)\bar s^t + \beta \bar s^{t-1} \|^2]\\
        &\leq 2(1+\beta)^2 \E[\|\hat s^t - \bar s^t\|^2] + 2\beta^2 \E[\|\hat s^{t-1} - \bar s^{t-1}\|^2] \notag \\
        &\leq \frac{30\rho\delta\sigma^2}{m}\left(1+\frac{1}{(1-\delta)n}\right)\left(\chi_1+\chi_2+\frac{\alpha\beta^2m}{(1-\beta^2)m_0}\right)+30\rho\delta\zeta^2\left(\chi_3+\chi_2+1\right). \notag
    \end{align}
    Substituting \eqref{eq:aa65} into \eqref{lyapunov} and rearranging the terms yield
    \begin{align}
    \label{lyapunov-new}
    &\quad ~ \E[f( y^t)-f^*] + \frac{\eta\beta}{1-\beta}\E[\|\bar e^t\|^2] + (12\chi_6+\frac{3}{8})\eta\E[\|\nabla f( y^{t-1})\|^2]\\
    &\leq \E[f( y^{t-1})-f^*] + \frac{\eta\beta}{1-\beta}\E[\|\bar e^{t-1}\|^2] +(12\chi_6+\frac{3}{8})\eta\E[\|\nabla f( y^{t-2})\|^2] \notag \\
    &\quad ~- \left(\frac{1}{8}-12\chi_6\right)\eta\|\nabla f( y^{t-1})\|^2 + \frac{5\theta^2\eta}{1-\beta} \frac{\sigma^2}{(1-\delta)nm} \notag \\
    &\quad ~  + \frac{42\eta\rho\delta\sigma^2}{m}\left(1+\frac{1}{(1-\delta)n}\right)\left(\chi_1+\chi_2+\frac{\alpha\beta^2m}{(1-\beta^2)m_0}\right)+42\eta\rho\delta\zeta^2\left(\chi_3+\chi_2+1\right). \notag
    \end{align}
    Summing over $t$ from $2$ to $T$ and also rearranging the terms, we have
    \begin{align}
    \label{eq:aa67}
        &\quad~\sum_{t=2}^T \left(\frac{1}{8}-12\chi_6\right)\|\nabla f( y^{t-1})\|^2\\
        &\leq  \frac{1}{\eta}\E[f( y^1)-f^*] + \frac{\beta}{1-\beta}\E[\|\bar e^1\|^2] +(12\chi_6+\frac{3}{8})\E[\|\nabla f( y^0)\|^2] \notag \\
        &\quad ~ +\sum_{t=2}^T\left(\frac{5\theta^2}{(1-\beta)(1-\delta)n}+42\rho\delta\left(1+\frac{1}{(1-\delta)n}\right)\left(\chi_1+\chi_2+\frac{\alpha\beta^2m}{(1-\beta^2)m_0}\right)\right)  \frac{\sigma^2}{m} \notag \\
         &\quad ~ +\sum_{t=2}^T 42\rho\delta\zeta^2\left(\chi_3+\chi_2+1\right). \notag
    \end{align}
    For the second line in \eqref{eq:aa67}, using Lemmas \ref{lem:sgdm-byz-descent} and \ref{lem:sgdm-byz-error} with $t = 1$ yields
    \begin{align}
    \label{eq:aa68}
        &\quad~\frac{1}{\eta}\E[f( y^1)-f^*] + \frac{\beta}{1-\beta}\E[\|\bar e^1\|^2] +(12\chi_6+\frac{3}{8})\E[\|\nabla f( y^0)\|^2]\\
        &\leq  \frac{1}{\eta}\left(f( y^0)-f^*\right) + \left(1+\frac{\beta}{1-\beta}\right)\E[\|\bar e^1\|^2] +\E[\|p^1 - \bar p^1\|^2] +(12\chi_6-\frac{1}{8})\E[\|\nabla f( y^0)\|^2] \notag \\
        &\leq  \frac{1}{\eta}\Delta + \frac{(\theta+\beta^2+\theta\beta)^2}{1-\beta}\frac{\sigma^2}{(1-\delta)nm}+\E[\|p^1 - \bar p^1\|^2] \notag \\
        &\quad+(12\chi_6-\frac{1}{8}+(\theta+\beta^2+\theta\beta-1)^2)\E[\|\nabla f( y^0)\|^2]. \notag
    \end{align}
    Substituting \eqref{eq:aa68} into \eqref{eq:aa67}, with $\frac{1}{8}-12\chi_6-(\theta+\beta^2+\theta\beta-1)^2 > \chi_7$, we have
    \begin{align*}
        &\quad~\sum_{t=1}^T \chi_7\|\nabla f( y^{t-1})\|^2 \\
        &\leq \sum_{t=1}^T \left(\frac{1}{8}-12\chi_6-(\theta+\beta^2+\theta\beta-1)^2\right)\|\nabla f( y^{t-1})\|^2\\
        &\leq  \frac{1}{\eta}\Delta + \frac{(\theta+\beta^2+\theta\beta)^2}{1-\beta}\frac{\sigma^2}{(1-\delta)nm}+\sum_{t=1}^T 42\rho\delta\zeta^2\left(\chi_3+\chi_2+1\right)\\
        &\quad ~ +\sum_{t=1}^T\left(\frac{5\theta^2}{(1-\beta)(1-\delta)n}+42\rho\delta\left(1+\frac{1}{(1-\delta)n}\right)\left(\chi_1+\chi_2+\frac{\alpha\beta^2m}{(1-\beta^2)m_0}\right)\right)  \frac{\sigma^2}{m},
    \end{align*}
    where the last inequality comes from \eqref{eq:aa67} and \eqref{eq:aa68}. Consequently, it holds that
    \begin{align}
    \label{result-detail}
    &\quad~\frac{1}{T}\sum_{t=1}^T\|\nabla f( y^{t-1})\|^2 \\
    &\leq \frac{1}{\chi_7}\Bigg(\frac{1}{\eta T}\Delta+\frac{1}{T}\frac{(\theta+\beta^2+\theta\beta)^2}{1-\beta}\frac{\sigma^2}{(1-\delta)nm}+42\rho\delta\zeta^2\left(\chi_3+\chi_2+1\right) \notag \\
    &\quad~ +\left(\frac{5\theta^2}{(1-\beta)(1-\delta)n}+42\rho\delta\left(1+\frac{1}{(1-\delta)n}\right)\left(\chi_1+\chi_2+\frac{\alpha\beta^2m}{(1-\beta^2)m_0}\right)\right)  \frac{\sigma^2}{m}\Bigg). \notag
    \end{align}
    Setting $1-\beta = 12L\eta$, since $\theta^2 \leq 2(1-\theta-\beta)^2+2(1-\beta)^2 \leq (2\chi_6+2)(1-\beta)^2$, $\theta+\beta^2+\theta\beta \leq 2$, $\chi_1+\chi_2 = O(L\eta)$, $\chi_3 = O(1) \geq 0$, $m_0=m/(L^2\eta^2)$, and $\chi_7 = \Theta (1) > 0$, we have
    \begin{align*}
    \frac{1}{T}\sum_{t=1}^T\|\nabla f( y^{t-1})\|^2 &\leq O \Bigg(\frac{1}{\eta T}(\Delta+\frac{\sigma^2}{L(1-\delta)nm})\\
    &\quad~ +\eta\left(\frac{1}{(1-\delta)n}+\rho\delta\left(1+\frac{1}{(1-\delta)n}\right)\right)  \frac{L\sigma^2}{m} +\rho\delta\zeta^2 \Bigg).
    \end{align*}
    Setting the step size
    \begin{align}\label{stepsize}
    \eta = \min \left( \sqrt{\frac{\Delta+\frac{\sigma^2}{L(1-\delta)nm}}{T\left(\frac{1}{(1-\delta)n}+\rho\delta\left(1+\frac{1}{(1-\delta)n}\right)\right)  \frac{L\sigma^2}{m}}}, \frac{1}{24L} \right),
    \end{align}
    with $m=O(1)$, we obtain
    \begin{align*}
    \frac{1}{T}\sum_{t=1}^T\|\nabla f( y^{t-1})\|^2 &\leq O \Bigg( \sqrt{\frac{L\Delta+\sigma^2/n}{T}}\sqrt{\left(\frac{1}{(1-\delta)n}+\rho\delta\right)  \sigma^2}\\
    &\quad~ + \frac{L\Delta}{T}+\frac{\sigma^2}{Tn} +\rho\delta\zeta^2 \Bigg),
    \end{align*}
    which completes the proof.
\end{proof}

\subsection{Proof of Theorem \ref{thm:nc}}
\label{section:Theorem 18}
\begin{proof}
    For any $\gamma=1,\cdots,\Gamma$, define an auxiliary variable $\varkappa^{\gamma *} = \arg\min f^\gamma(\varkappa)$. In view of the strong convexity of $f^\gamma$, we have
        \begin{align}
        \label{eq:fx}
            f^\gamma(\varkappa) &\geq f^\gamma(\varkappa^{\gamma *}) + \frac{L}{2}\|\varkappa - \varkappa^{\gamma *}\|^2 \\
            &= f^\gamma(\varkappa^\gamma) + f^\gamma(\varkappa^{\gamma *}) - f^\gamma(\varkappa^\gamma) + \frac{L}{2}\|x - \varkappa^{\gamma *}\|^2 \notag \\
            &= f(\varkappa^\gamma) + L\|\varkappa^\gamma-\varkappa^{\gamma-1}\|^2 + f^\gamma(\varkappa^{\gamma *}) - f^\gamma(\varkappa^\gamma) + \frac{L}{2}\|x - \varkappa^{\gamma *}\|^2. \notag
        \end{align}

    Setting $x = \varkappa^\gamma$ in \eqref{eq:fx} and summing it from $\gamma=1,\cdots,\Gamma$, we obtain
    \begin{align}
    \label{eq:aa69}
        \frac{L}{2} \sum_{\gamma=1}^\Gamma \|\varkappa^\gamma - \varkappa^{\gamma *}\|^2 \leq \sum_{\gamma=1}^\Gamma \left( f^\gamma(\varkappa^\gamma) - f^\gamma(\varkappa^{\gamma *}) \right).
    \end{align}
    Similarly, setting $x = \varkappa^{\gamma-1}$ in \eqref{eq:fx} and using $f^\gamma(\varkappa^{\gamma-1}) = f(\varkappa^{\gamma-1})$ yield
    \[
        f(\varkappa^{\gamma-1}) \geq f(\varkappa^\gamma) + L\|\varkappa^\gamma-\varkappa^{\gamma-1}\|^2 + f^\gamma(\varkappa^{\gamma *}) - f^\gamma(\varkappa^\gamma) + \frac{L}{2}\|\varkappa^{\gamma-1} - \varkappa^{\gamma *}\|^2.
    \]
    Summing the above inequality from $\gamma=1,\cdots,\Gamma$, we obtain
    \begin{align}
    \label{eq:aa70}
        \hspace{-1em} L\sum_{\gamma=1}^\Gamma \left(\|\varkappa^\gamma - \varkappa^{\gamma-1}\|^2 + \frac{1}{2}\|\varkappa^{\gamma-1} - \varkappa^{\gamma *}\|^2\right) \leq f(\varkappa^0)-f(\varkappa^\gamma) +  \sum_{\gamma=1}^\Gamma \left( f^\gamma(\varkappa^\gamma) - f^\gamma(\varkappa^{\gamma *}) \right).
    \end{align}

    Due to the optimality of $\varkappa^{\gamma *}$, we have
    \[
        0 = \nabla f^\gamma(\varkappa^{\gamma *}) = \nabla f(\varkappa^{\gamma *}) + 2L(\varkappa^{\gamma *} - \varkappa^{\gamma-1}),
    \]
    which, together with \eqref{eq:aa69} and \eqref{eq:aa70}, further implies
        \begin{align}
        \label{eq:sumg}
            \sum_{\gamma=1}^\Gamma \|\nabla f(\varkappa^\gamma)\|^2 &= \sum_{\gamma=1}^\Gamma \|\nabla f(\varkappa^\gamma) - \nabla f(\varkappa^{\gamma *})+\nabla f(\varkappa^{\gamma *})\|^2\\
            &\leq 2\sum_{\gamma=1}^\Gamma\|\nabla f(\varkappa^\gamma) - \nabla f(\varkappa^{\gamma *})\|^2+2\sum_{\gamma=1}^\Gamma\|\nabla f(\varkappa^{\gamma *})\|^2 \notag \\
            &\leq 2L^2\sum_{\gamma=1}^\Gamma\|\varkappa^\gamma - \varkappa^{\gamma *}\|^2+8L^2\sum_{\gamma=1}^\Gamma\|\varkappa^{\gamma *} - \varkappa^{\gamma-1}\|^2 \notag \\
            &\leq 20L\sum_{\gamma=1}^\Gamma \left( f^\gamma(\varkappa^\gamma) - f^\gamma(\varkappa^{\gamma *}) \right) +16L\left(f(\varkappa^0)-f(\varkappa^\gamma)\right). \notag
        \end{align}

    Now we bound the term $f^\gamma(\varkappa^\gamma) - f^\gamma(\varkappa^{\gamma *})$ in \eqref{eq:sumg} by induction. Following the analysis from \eqref{f:redisna} to \eqref{complexity:redisna}, we know that to achieve
    \[
    \E[f^1(\varkappa^1) - f^1(\varkappa^{1*})]
    \leq \frac{\epsilon^2}{40L} + \frac{64\rho\delta\zeta^2}{L},
    \]
    the oracle query complexity of Byrd-reNester is at least
    \[
    S_1 \leq 2\sqrt{3}\log \frac{36L\Delta}{\epsilon^2} + 7680 \sqrt{3}\left(3 \rho \delta (1+\frac{1}{(1 - \delta)n})+\frac{1}{(1 - \delta)n}\right)\frac{\sigma^2}{\epsilon^2},
    \]
    since $\kappa = 3$ and Lipschitz constant of $f^1(\cdot)$ is $3L$. Suppose $\E[f^{\gamma-1}(\varkappa^{\gamma-1}) - f^{\gamma-1}(\varkappa^{(\gamma-1)*})] \leq \frac{\epsilon^2}{40L} + \frac{64\rho\delta\zeta^2}{L}$, we have
    \begin{align*}
        \E[f(\varkappa^{\gamma-1})] \leq \E [f^{\gamma - 1}(\varkappa^{\gamma-1}) ] \leq \E[f^{\gamma - 1}(\varkappa^{(\gamma-1)*})+\frac{\epsilon^2}{40L} + \frac{64\rho\delta\zeta^2}{L}] \leq \E[f(\varkappa^{\gamma - 2})+\frac{\epsilon^2}{40L} + \frac{64\rho\delta\zeta^2}{L}],
    \end{align*}
    and hence
    \[
        \E[f(\varkappa^{\gamma-1})] \leq f(\varkappa^0)+ (\gamma-1)\frac{\epsilon^2}{40L}+(\gamma-1)\frac{64\rho\delta\zeta^2}{L}.
    \]
    Using the analysis from \eqref{f:redisna} to \eqref{complexity:redisna} again, for any $\gamma=1,\cdots,\Gamma$ we obtain that
    \begin{align}
    \label{eq:aa72}
        &\E[f^\gamma(\varkappa^\gamma) - f^\gamma(\varkappa^{\gamma *})] \leq \frac{\epsilon^2}{40L} + \frac{64\rho\delta\zeta^2}{L}
    \end{align}
    with oracle query complexity
    \begin{align*}
    S_\gamma &\leq 2\sqrt{3}\log \frac{36L(f^\gamma(\varkappa^{\gamma-1})-f^\gamma(\varkappa^{\gamma *}))}{\epsilon^2} + 7680 \sqrt{3}\left(3 \rho \delta (1+\frac{1}{(1 - \delta)n})+\frac{1}{(1 - \delta)n}\right)\frac{\sigma^2}{\epsilon^2}\\
    &\leq 2\sqrt{3}\log \frac{36L(f^\gamma(\varkappa^{\gamma-1})-f^*)}{\epsilon^2} + 7680 \sqrt{3}\left(3 \rho \delta (1+\frac{1}{(1 - \delta)n})+\frac{1}{(1 - \delta)n}\right)\frac{\sigma^2}{\epsilon^2}\\
    &\leq 2\sqrt{3}\log \frac{36L(f(\varkappa^0)-f^*+ (\gamma-1)\frac{\epsilon^2}{40L}+(\gamma-1)\frac{64\rho\delta\zeta^2}{L})}{\epsilon^2} \\
    &\quad~+ 7680 \sqrt{3}\left(3 \rho \delta (1+\frac{1}{(1 - \delta)n})+\frac{1}{(1 - \delta)n}\right)\frac{\sigma^2}{\epsilon^2}.
    \end{align*}

    Combining \eqref{eq:sumg} and \eqref{eq:aa72}, we have
    \begin{align*}
        \frac{1}{\Gamma} \sum_{\gamma=1}^\Gamma \E[\|\nabla f(\varkappa^\gamma)\|^2]
        &\leq \frac{20L}{\Gamma}\sum_{\gamma=1}^\Gamma \E[ f^\gamma(\varkappa^\gamma) - f^\gamma(\varkappa^{\gamma *}) ]+\frac{16L}{\Gamma}\left(f(\varkappa^0)-f(\varkappa^\gamma)\right)\\
        &\leq \frac{\epsilon^2}{2} + 1280\rho\delta\zeta^2 +\frac{16L}{\Gamma}\left(f(\varkappa^0)-f^*\right).
    \end{align*}
    With $\Gamma = \lceil 32L\left(f(\varkappa^0)-f^*\right)\epsilon^{-2} \rceil$, the oracle query complexity is
    \[
    \sum_{\gamma=1}^\Gamma S_\gamma \leq \Gamma S_\Gamma =  O\left(\frac{L\Delta\rho\delta\sigma^2}{\epsilon^4}+\frac{L\Delta\sigma^2}{(1-\delta)n\epsilon^4}+\frac{L\Delta}{\epsilon^2}\log\frac{L\Delta(1+\rho\delta\zeta^2)}{\epsilon^2}\right),
    \]
    which completes the proof.
\end{proof}

\section{Equivalent form of Algorithm \ref{subalgo}}\label{appendix:Equivalentform}
\begin{algorithm}[ht]
    \caption{Equivalent form of Algorithm \ref{subalgo}}
    \begin{algorithmic}
    \STATE{Input: starting point $x^0$, auxiliary point $y^0=x^0$, maximum number of iterations $T$, batch size $m_0$, $m$, step size $\eta$, $\theta\in(0,1]$}, {$\beta\in[0,1)$}, {$\alpha\in[0,1]$, $\hat s^0 = s_i^0 = \frac{1}{m_0}\sum_{l = 1}^{m_0} \nabla F(y^{0};\xi^{(0,l)}_i)$.}
    \FOR{$t=1,\cdots,T$}
    \FOR{node $i \in \mathcal {H}$}
    \IF{\(\mod{(t,2)}=1\)}{
    \STATE{Independently sample $\{\xi^{(t-1,1)}_i,\cdots,\xi^{(t-1,m)}_i \}$, obtain stochastic gradients from oracle $\mathsf O \in \mathcal{O}$ and calculate
        \begin{equation*}
        \begin{aligned}
             w_i^t = g_i^{t-1} =  \frac{1}{m}\sum_{l = 1}^m \nabla F(y^{t-1};\xi^{(t-1,l)}_i),\quad
             s^t_i =  s^{t-1}_i .
        \end{aligned}
        \end{equation*}
    }}
    \ELSE{
    \STATE{\begin{equation*}
        \begin{aligned}
            w_i^t = s^t_i =  \beta s^{t-1}_i + \theta g_i^{t-2}.\quad\quad\quad\quad
        \end{aligned}
        \end{equation*}
    }    }
    \ENDIF
    \STATE{Send $w_i^t$ to server.}
    \ENDFOR
    \FOR{node $i \in \mathcal {B}$}
    \STATE{Send arbitrary vector $w_i^t \in \mathbb R^d$ to server.}
    \ENDFOR
    \FOR{server}
    \IF{\(\mod{(t,2)}=1\)}{
    \STATE{Receive \(\{w^t_i\}_{i=1}^n\) to update
    \begin{align*}
            w^t = {\mathsf A} (\{ w_i^t\}_n), \quad
            x^t = x^{t-1}.
    \end{align*}}}
    \ELSE{
    \STATE{Receive \(\{w^t_i\}_{i=1}^n\) and update
    \begin{align*}
            w^t &= \beta \hat w^{t-2} + \theta w^{t-1}, \\
            \hat w^t &= \alpha {\mathsf A} (\{w^t_i\}_n) + (1 - \alpha) w^t, \\
            x^t &= x^{t-1} - \eta \hat w^t,  \\
            y^t &= x^t + \beta (x^t - x^{t-1}).
    \end{align*} \vspace{-1em} }    }
    \ENDIF
    \ENDFOR
    \ENDFOR
    \RETURN $\tilde x^K = x^T$ for strongly convex optimization; $\tilde x^K = x^{t'}$ where $t'$ is randomly chosen from \(1,\cdots,T\) for non-convex optimization. Here $K$ is the number of oracle queries.
    \end{algorithmic}
\end{algorithm}

\end{document}